\theoremstyle{definition}
\newtheorem{lemma}{Lemma}
\newtheorem{theorem}{Theorem}
\newtheorem{proposition}[lemma]{Proposition}
\newtheorem{corollary}[lemma]{Corollary}
\newcommand{\m}[1]{{\bf {#1}}}									% Algebras
\newcommand{\lgc}[1]{{\ensuremath{{{\mathbf{#1}}}}}}						% Logics, Sans Serif
\newcommand{\cls}[1]{\ensuremath{{\sf #1}}}						% Varieties, Calligraphic
\newcommand{\opr}[1]{\ensuremath{{\mathbb #1}}}					% Class operator	
\newcommand{\prp}[1]{{\sf #1}}
\renewcommand{\phi}{\varphi}
\newcommand{\fb}{\varphi_\m{B}}
\newcommand{\fc}{\varphi_\m{C}}
\newcommand{\gb}{\psi_\m{B}}
\newcommand{\gc}{\psi_\m{C}}
\newcommand{\emd}{\hookrightarrow}
\DeclareMathOperator{\im}{im}
\newcommand{\eq}{\approx}
\newcommand{\Z}{\mathbb{Z}}
\newcommand{\var}{\mathsf{var}}
\newcommand{\hm}{\opr{H}}
\newcommand{\iso}{\opr{I}}
\newcommand{\pr}{\opr{P}}
\newcommand{\prd}{\opr{P}}
\newcommand{\pu}{\opr{P}_\mathsf{U}}
\newcommand{\sub}{\opr{S}}
\newcommand{\vr}{\opr{V}}
\newcommand{\quas}{\opr{Q}}
\newcommand{\ispu}{\iso\sub\pu}
\newcommand{\isppu}{\iso\sub\pr\pu}
\DeclareMathOperator{\hsp}{\hm\sub\prd}
\newcommand{\Q}{\cls{Q}}
\newcommand{\Qrfsi}{\cls{Q}_{\mathrm{RFSI}}}
\newcommand{\K}{\cls{K}}
\renewcommand{\L}{\cls{L}}
\newcommand{\SA}{\cls{SA}}
\newcommand{\SM}{\cls{SM}}
\newcommand{\omq}{\Omega}
\newcommand{\E}{\m{E}}
\renewcommand{\Z}{\m{Z}}
\newcommand{\cg}{\mathrm{Cg}_\m{A}^\Q}
\newcommand{\ut}{\textrm{\textup{e}}}
\newcommand{\tc}{\textrm{\textup{t}}}
\newcommand{\zr}{\textrm{\textup{f}}}
\newcommand{\meet}{\mathbin{\land}}
\newcommand{\join}{\mathbin{\lor}}
\newcommand{\fm}{\mathrm{Fm}}
\g@addto@macro{\endabstract}{\@setabstract}
\newcommand{\authorfootnotes}{\renewcommand\thefootnote{\@fnsymbol\c@footnote}}%
\newcommand\extrafootertext[1]{%
    \bgroup
    \renewcommand\thefootnote{\fnsymbol{footnote}}%
    \renewcommand\thempfootnote{\fnsymbol{mpfootnote}}%
    \footnotetext[0]{#1}%
    \egroup
}
\begin{document}

%
% Use the package "url.sty" to avoid
% problems with special characters
% used in your e-mail or web address
%
\begin{center}
  \large
  {\bf Maehara Interpolation in Extensions of $\lgc{R}$-mingle} \par \bigskip

  \normalsize
  \authorfootnotes
  Wesley Fussner\footnote{e-mail: \url{fussner@cs.cas.cz}}\textsuperscript{1}, Krzysztof Krawczyk\footnote{e-mail: \url{krawczyk@cs.cas.cz}}\textsuperscript{1}\par  \bigskip

  \textsuperscript{1}Institute of Computer Science of the Czech Academy of Sciences \par
  \footnotetext[0]{This research was supported by the Czech Science Foundation project 25-18306M, INTERACT.}

\end{center}

\begin{abstract}
We show that there are exactly five quasivarieties of Sugihara algebras with the amalgamation property, and that all of these have the relative congruence extension property. As a consequence, we obtain that the amalgamation property and transferable injections property coincide for arbitrary quasivarieties of Sugihara algebras. These results provide a complete description of arbitrary (not merely axiomatic) extensions of the logic $\lgc{R}$-mingle that have the Maehara interpolation property, and further demonstrates that the Robinson property and Maehara interpolation property coincide for arbitrary extensions of $\lgc{R}$-mingle. Further, we show that the question of whether a given finitely based extension of $\lgc{R}$-mingle has the Maehara interpolation property is decidable.
\end{abstract}

\section{Introduction}
\label{sec:intro}

Relevant logics have long been the site of intriguing work on interpolation. Most famously, Urquhart gave an elegant geometric proof in \cite{Urq93} of the failure the Craig interpolation property in the logic of relevant implication $\lgc{R}$, as well as a range of other systems in the relevant family, such as the logic $\lgc{E}$ of entailment and $\lgc{T}$ of ticket entailment. He subsequently gave simpler, more direct proofs in \cite{Urq99} and \cite{RRR2}. These studies comprise a celebrated and well known line of research, and have sometimes been understood by logicians as the final word on this topic or, even worse, as saying `interpolation fails in relevant logic' simpliciter. However, there is in fact much more to be said about interpolation for systems not covered by Urquhart's arguments, some of which \emph{do} enjoy sundry interpolation properties. Indeed, already at the dawn of the subject, Anderson and Belnap showed at \cite[p.~161]{AB75} that the logic of first-degree entailment has the so-called \emph{perfect Craig interpolation property}, that is:
\begin{equation}
  \tag{\prp{PCIP}}\label{eq:PCIP}
  \parbox{\dimexpr\linewidth-6em}{%
    \strut
	If $\vdash\alpha\to\beta$, then there is a formula $\delta$ such that $\var(\delta)\subseteq\var(\alpha)\cap\var(\beta)$ and 	both $\vdash\alpha\to\delta$ and $\vdash\delta\to\beta$.
    \strut
    }
\end{equation}
On the other hand, they also show at \cite[pp.~416-417]{AB75} that the logic $\lgc{RM}$---that is, $\lgc{R}$ with the addition of the mingle axiom $\alpha\to(\alpha\to\alpha)$---lacks the \emph{Craig interpolation property} (\emph{\prp{CIP}}) in the following familiar (imperfect) form:
\begin{equation}
  \tag{\prp{CIP}}\label{eq:CIP}
  \parbox{\dimexpr\linewidth-6em}{%
    \strut
	If ${\not\vdash}~\neg\alpha$ , ${\not\vdash}~\beta$, and $\vdash\alpha\to\beta$, then there is a formula $\delta$ such 	that $\var(\delta)\subseteq\var(\alpha)\cap\var(\beta)$ and both $\vdash\alpha\to\delta$ and $\vdash\delta\to\beta$.
    \strut
    }
\end{equation}
The logic $\lgc{RM}$ is here formulated without the truth constants $\tc$ and $\zr$, and this turns out to make a big difference: Meyer showed in \cite{MeyerReport} that the logic $\lgc{RM}^\tc$---that is, $\lgc{RM}$ with $\tc$ and $\zr$---has the \prp{CIP}.\footnote{Or at least this has been reliably reported in the literature. The cited technical report of Meyer appears to be lost to time and we could not procure a copy. The authors do not know, and could not gather from discussions with firsthand witnesses, the precise form of interpolation that Meyer established in \cite{MeyerReport}.}
 As shown in \cite{KiharaOno2010,GalatosJipsenKowalskiOno2007}, for $\lgc{RM}^\tc$ and other substructural logics validating the exchange rule, the perfect Craig interpolation property entails the following \emph{deductive interpolation property} (\emph{\prp{DIP}}):
\begin{equation}
  \tag{\prp{DIP}}\label{eq:DIP}
  \parbox{\dimexpr\linewidth-6em}{%
    \strut
	If $\alpha\vdash\beta$, then there is a formula $\delta$ such that $\var(\delta)\subseteq\var(\alpha)\cap\var(\beta)$ and 		both $\alpha\vdash\delta$ and $\delta\vdash\beta$.
    \strut
    }
\end{equation}
The converse is \emph{not} true: It is shown in \cite{FSLinear} that there are continuum-many substructural logics with exchange that have the \prp{DIP} but not the \prp{PCIP}. However, Marchioni and Metcalfe show in \cite{MarMet2012} that the \prp{DIP}, as formulated above, coincides with the \prp{PCIP} for axiomatic extensions of $\lgc{RM}^\tc$, and prove that there are just nine axiomatic extensions of $\lgc{RM}^\tc$ that have these two equivalent properties.

Taking a wider viewpoint, one salient feature of $\lgc{RM}^\tc$ is that it is \emph{semilinear}: It is characterized by linearly ordered algebraic models. Semilinear logics have themselves attracted quite a lot of attention, particularly among fuzzy logicians, who often take semilinearity to be characteristic of fuzziness (see, e.g., \cite{Hajek1998}). They also have been an important source of insight contributing to our current understanding of interpolation in substructural logics writ large (see, e.g., \cite{FSsemilin,FG1,FG2,FMS2024,GLT15,FSBasic}). Notably, the models of $\lgc{RM}^\tc$ do not exhaust all linearly ordered models of $\lgc{R}^\tc$, and one may quite profitably investigate the semilinear extensions of $\lgc{R}$ and $\lgc{R}^\tc$ themselves. Indeed, the extension of $\lgc{R}^\tc$ characterized by linearly ordered algebraic models does not itself have the \prp{DIP}, but nevertheless there are infinitely many semilinear extensions of $\lgc{R}^\tc$ that \emph{do} have the \prp{DIP}; see \cite[Proposition~5.3]{FSsemilin}.

The present paper returns to the question of interpolation in $\lgc{RM}$ (without constants). While it was already known early on in the history of relevant logic that $\lgc{RM}$ lacks the \prp{CIP}, variants of interpolation focusing on consequence rather than implication appear to have not been studied in the absence of truth constants. In this study, we focus on the following strong form of the \prp{DIP}---variously called the \emph{strong deductive interpolation property} or the \emph{Maehara interpolation property} (or \emph{\prp{MIP}} for short)---here tailored for logics omitting truth constants:
\begin{equation}
  \tag{\prp{MIP}}\label{eq:MIP}
  \parbox{\dimexpr\linewidth-6em}{%
    \strut
	If $\var(\Sigma\cup\{\alpha\})\cap\var(\Gamma)\neq\emptyset$ and $\Sigma,\Gamma\vdash\alpha$, there exists a set of 		formulas $\Delta$ such that $\var(\Delta)\subseteq\var(\Sigma\cup\{\alpha\})\cap\var(\Gamma)$ and both 
	$\Gamma\vdash\Delta$ and $\Sigma,\Delta\vdash\alpha$.
    }
\end{equation}
Like many systematic studies of interpolation in nonclassical logics, our methodology centers on algebraic semantics and, in particular, here focuses on \emph{Sugihara algebras}, which make up the equivalent algebraic semantics of $\lgc{RM}$. Consequently, our contributions have a two-part nature, reflecting both the algebraic and logical manifestations of the results. 

These contributions are several. First, after laying down some background material in Section~\ref{sec:sugihara}, we obtain, in Theorem~\ref{lem:AP classification}, a complete description of quasivarieties of Sugihara algebras with the amalgamation property. Our means of getting to this classification is through the by-now-standard methodology of \emph{closure properties} (cf.~\cite{FSBasic,FMS2024}): We prove a number of lemmas of the form `if $\Q$ is a quasivariety of Sugihara algebras with the amalgamation property and $\m{A}\in\Q$, then $\m{B}\in\Q$ as well', and then show that such closure lemmas suffice to specify all quasivarieties of Sugihara algebras with amalgamation. This is, as far as the authors are aware, the first successful application of this methodology to classify the subquasivarieties of a given quasivariety with amalgamation, as opposed to merely subvarieties. One of the main difficulties of applying this strategy in the setting of quasivarieties is that the relative congruence extension property---unlike the absolute congruence extension property considered in the setting of varieties---does not transfer from a quasivariety to its subquasivarieties. The fact that most quasivarieties of Sugihara algebras lack the relative congruence extension property (see \cite{Czel1999}) is one of the chief obstacles that we overcome here, hewing to proofs that avoid the congruence extension property throughout our development.

As a by-product of eschewing the relative congruence extension property in our arguments, we obtain the surprising result (Corollary~\ref{cor:from AP to RCEP}) that, for quasivarieties of Sugihara algebras, the amalgamation property implies the relative congruence extension property. This is especially interesting in light of Kearnes' result that for a residually small, congruence-distributive variety, the amalgamation property implies the congruence extension property \cite{Kearnes1989}. As a further corollary, we also obtain, in Corollary~\ref{cor:from AP to TIP}, that the amalgamation property coincides with the transferable injections property for arbitrary quasivarieties of Sugihara algebras. 

Because quasivarieties of Sugihara algebras give equivalent algebraic semantics for extensions of $\lgc{RM}$, the amalgamation property for a given quasivariety $\Q$ of Sugihara algebras corresponds directly with the Robinson property for the extension $\lgc{L}$ of $\lgc{RM}$ corresponding to $\Q$, and likewise the transferable injections property for $\Q$ corresponds to the Maehara interpolation property for $\lgc{L}$. Thus we obtain, in Theorem~\ref{thm:MIP classification}, a complete list of extensions of $\lgc{RM}$ with Maehara interpolation and, moreover, give explicit finite quasiequational bases for each of these. Further, as a consequence of Corollary~\ref{cor:from AP to TIP}, we obtain (Proposition~\ref{prop:RP iff MIP}) that the Robinson property and Maehara interpolation property coincide for any extension of $\lgc{RM}$. This can be compared to the coincidence of the \prp{CIP} and \prp{DIP} for extensions of $\lgc{RM}^\tc$ (see \cite{MarMet2012}). Finally, using the fact that Sugihara algebras are locally finite and the classification in Theorem~\ref{thm:MIP classification}, we obtain, in Theorem~\ref{thm:decidability}, that it is effectively decidable whether a finitely based quasivariety of Sugihara algebras has the amalgamation property and, likewise, whether an extension of $\lgc{RM}$ by finitely many rules has the \prp{MIP}.

\section{Algebraic preliminaries and the basics of Sugihara algebras}\label{sec:sugihara}
Before beginning our main argument, we summarize some key definitions and facts that we will invoke later on.

\subsection{Algebra and amalgamation}

First, we lay out a few of the most important definitions from universal algebra. Let $\K$ and $\L$ be classes algebras in a common similarity type with $\K\subseteq\L$. A \emph{half span in $\K$} is a pair $\langle\fb\colon\m{A}\emd\m{B},\fc\colon\m{A}\to\m{C}\rangle$ of homomorphisms, where $\m{A},\m{B},\m{C}\in\K$ and $\fb$ is an embedding. If $\langle\fb\colon\m{A}\emd\m{B},\fc\colon\m{A}\emd\m{C}\rangle$ is a half span in $\K$ such that $\fc$ is also an embedding, then we say that $\langle\fb,\fc\rangle$ is a \emph{span in $\K$}. Given a span $\langle\fb\colon\m{A}\emd\m{B},\fc\colon\m{A}\emd\m{C}\rangle$ in $\K$, an \emph{amalgam of $\langle\fb,\fc\rangle$ in $\L$} is a pair $\langle\gb\colon\m{B}\emd\m{D},\gc\colon\m{C}\emd\m{D}\rangle$ of embeddings, where $\m{D}\in\L$ and $\gb\fb=\gc\fc$.  A class $\K$ of similar algebras is said to have the \emph{amalgamation property} (or \emph{\prp{AP}}) if each span in $\K$ has an amalgam in $\K$. If $\Q$ is any quasivariety and $\Lambda(\Q)$ is its lattice of subquasivarieties, we denote by $\omq(\Q)$ the subposet of $\Lambda(\Q)$ consisting of all $\L\in\Lambda(\Q)$ such that $\L$ has the \prp{AP}. See the lefthand side of Figure~\ref{fig:AP and TIP}.

If $\K$ is any class of algebras in a common similarity type, we say that $\K$ has the \emph{transferable injections property} (or \emph{\prp{TIP}}) if for any half span $\langle\fb\colon\m{A}\emd\m{B},\fc\colon\m{A}\to\m{C}\rangle$ in $\K$, there exists a pair $\langle\gb\colon\m{B}\to\m{D},\gc\colon\m{C}\emd\m{D}\rangle$ of homomorphisms such $\m{D}\in\K$, $\gc$ is an embedding, and $\gb\fb=\gc\fc$. See the righthand side of Figure~\ref{fig:AP and TIP}.

\begin{figure}[t]
\begin{center}
\captionsetup{justification=centering}
\begin{tabular}{ccc}
\begin{tikzcd}
	& {\bf B} \\
	{\bf A} && {\bf D} \\
	& {\bf C}
	\arrow["{\fb}", hook, from=2-1, to=1-2]
	\arrow["{\gb}", dashed, hook, from=1-2, to=2-3]
	\arrow["{\fc}"', hook, from=2-1, to=3-2]
	\arrow["{\gc}"', dashed, hook, from=3-2, to=2-3]
\end{tikzcd}\hspace{0.3 in}
& &
\begin{tikzcd}
	& {\bf B} \\
	{\bf A} && {\bf D} \\
	& {\bf C}
	\arrow["{\fb}", from=2-1, to=1-2]
	\arrow["{\gb}", dashed, hook, from=1-2, to=2-3]
	\arrow["{\fc}"', hook, from=2-1, to=3-2]
	\arrow["{\gc}"', dashed,  from=3-2, to=2-3]
\end{tikzcd}
\\ \\
(\prp{AP})\hspace{0.31 in} & & (\prp{TIP})
\end{tabular}
\end{center}
\caption{Commutative diagrams for the \prp{AP} (left) and \prp{TIP} (right).}
\label{fig:AP and TIP}
\end{figure}
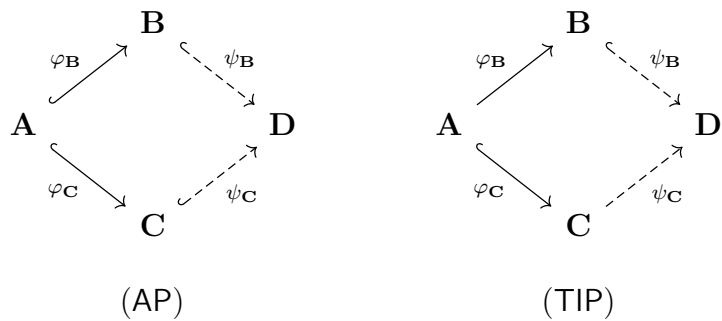

For a class of similar algebras $\K$, we denote by $\hm(\K)$, $\iso(\K)$, $\sub(\K)$, $\pr(\K)$, and $\pu(\K)$ the closure of $\K$ under homomorphic images, isomorphisms, subalgebras, products, and ultraproducts, respectively. We further denote by $\vr(\K)=\hsp(\K)$ the variety generated by $\K$, and by $\quas(\K) = \isppu(\K)$ the quasivariety generated by $\K$. For any class $\K$ of algebras, we denote the class of finitely generated members of $\K$ by $\K_\mathrm{FG}$.

Now let $\Q$ be any quasivariety and $\m{A}\in\Q$. A congruence $\Theta$ of $\m{A}$ is called a \emph{$\Q$-congruence}, or simply a \emph{relative congruence} if $\Q$ is understood, provided that $\m{A}/\Theta\in\Q$. A quasivariety $\Q$ is said to have the \emph{relative congruence extension property} (or \emph{\prp{RCEP}}) if whenever $\m{A},\m{B}\in\Q$, $\m{A}$ is a subalgebra of $\m{B}$, and $\Theta$ is a relative congruence of $\m{A}$, there exists a relative congruence $\Psi$ of $\m{B}$ such that $\Theta = \Psi\cap A^2$. When $\Q$ is a variety, every congruence of an algebra in $\Q$ is a $\Q$-congruence and, if $\Q$ has the \prp{RCEP}, we just say that $\Q$ has the \emph{congruence extension property} (or \emph{\prp{CEP}}).

An algebra $\m{A}$ is called \emph{directly indecomposable} if it is not isomorphic to $\m{B}\times\m{C}$ for any nontrivial algebras $\m{B}$, $\m{C}$.

Given any quasivariety $\Q$, an algebra in $\m{A}\in\Q$ is called \emph{relatively (finitely) subdirectly irreducible} if whenever $\m{A}$ is a (finite) subdirect product of algebras in $\Q$, then $\m{A}$ is isomorphic to one of those algebras. Note that $\m{A}\in\Q$ is relatively subdirectly irreducible if the equality congruence $\Delta_{\m{A}} = \{(a,a)\mid a\in A\}$ is completely meet irreducible in the lattice of relative congruences of $\m{A}$, and $\m{A}$ is relatively finitely subdirectly irreducible if $\Delta_{\m{A}}$ is meet irreducible in the lattice of relative congruences.\footnote{It is a consequence of this definition that the trivial algebra is relatively finitely subdirectly irreducible.} We denote the classes of relatively subdirectly irreducible and relatively finitely subdirectly irreducible algebras in a class $\K\subseteq\Q$ by $\K_\mathrm{RSI}$ and $\K_\mathrm{RFSI}$, respectively. Here, of course, we take care to only use this (a priori ambiguous) notation when $\Q$ is apparent from context. When $\Q$ is a variety, we needn't relativize these definitions and they instead reduce to the usual notions of subdirect irreducibility and finite subdirect irreducibility. In this case, we drop the `R' from our notation and write $\K_\mathrm{SI}$ and $\K_\mathrm{FSI}$ for the classes of subdirectly irreducible and finitely subdirectly irreducible members of $\K$, respectively. 

The following result is often called the \emph{Relativized J\'{o}nsson Lemma} and it is fundamental to working with lattices of subquasivarieties.

\begin{lemma}[see, e.g., {\cite[Lemma~1.5]{Czel1990}}]
Let $\K$ be any class of similar algebras. Then every nontrivial member of $\quas(\K)_\mathrm{RFSI}$ belongs to $\ispu(\K)$.
\end{lemma}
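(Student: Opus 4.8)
The plan is to realize the nontrivial algebra $\m{A}\in\quas(\K)_\mathrm{RFSI}$ as a subalgebra of a \emph{single} ultraproduct of members of $\K$, which is precisely what membership in $\ispu(\K)$ demands. Set $\Q=\quas(\K)=\isppu(\K)=\iso\sub\pr\pu(\K)$. First I would fix an embedding $e\colon\m{A}\emd\prod_{i\in I}\m{C}_i$ with each $\m{C}_i\in\pu(\K)$ (and $I\neq\emptyset$, since $\m{A}$ is nontrivial); writing $\pi_i$ for the $i$-th projection and $\theta_i=\ker(\pi_i\circ e)$, injectivity of $e$ gives $\bigcap_{i\in I}\theta_i=\Delta_\m{A}$, and since $\m{A}/\theta_i$ embeds into $\m{C}_i\in\pu(\K)\subseteq\Q$, each $\theta_i$ is a relative congruence of $\m{A}$. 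The goal is then to produce a single ultrafilter $U$ on $I$ for which the induced homomorphism $\m{A}\to\prod_{i\in I}\m{C}_i/U$ is injective: granting this, its codomain lies in $\pu(\pu(\K))\subseteq\iso\pu(\K)$, so $\m{A}\in\iso\sub\pu(\K)=\ispu(\K)$, as required.

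To locate $U$, for distinct $a,b\in A$ put $S_{a,b}=\{i\in I:(a,b)\in\theta_i\}$. The map into the quotient ultraproduct has kernel $\{(a,b):S_{a,b}\in U\}$, so it suffices to choose $U$ containing every complement $I\setminus S_{a,b}$ with $a\neq b$. By the ultrafilter lemma such a $U$ exists provided the family $\{I\setminus S_{a,b}:a\neq b\}$ has the finite intersection property, equivalently provided that $\bigcup_{k\le n}S_{a_k,b_k}\neq I$ for every choice of distinct pairs $(a_1,b_1),\dots,(a_n,b_n)$.

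The main step, and the point at which relative finite subdirect irreducibility enters, is verifying this finite intersection property. Suppose toward a contradiction that $\bigcup_{k\le n}S_{a_k,b_k}=I$ for some such pairs; note $n\ge 1$ as $I\neq\emptyset$. Let $\phi_k=\cg(a_k,b_k)$ be the relative congruence generated by $(a_k,b_k)$; since $a_k\neq b_k$ we have $\phi_k>\Delta_\m{A}$. Because $\m{A}$ is relatively finitely subdirectly irreducible, $\Delta_\m{A}$ is meet irreducible in the lattice of relative congruences $\mathrm{Con}_\Q(\m{A})$, so the relative congruences strictly above $\Delta_\m{A}$ are closed under finite meets; hence $\phi_1\cap\dots\cap\phi_n>\Delta_\m{A}$ and we may pick $c\neq d$ with $(c,d)\in\phi_k$ for every $k$. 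Now for each $i\in I$ there is some $k$ with $i\in S_{a_k,b_k}$, i.e.\ $(a_k,b_k)\in\theta_i$; as $\theta_i$ is a relative congruence and $\phi_k$ is the least relative congruence containing $(a_k,b_k)$, this forces $\phi_k\subseteq\theta_i$ and thus $(c,d)\in\theta_i$. Since $i$ was arbitrary, $(c,d)\in\bigcap_{i\in I}\theta_i=\Delta_\m{A}$, contradicting $c\neq d$.

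With the finite intersection property in hand, choosing $U$ as above finishes the proof. I expect the entire substance to lie in this last reduction from the finitary meet irreducibility of $\Delta_\m{A}$ to the infinitary equality $\bigcap_i\theta_i=\Delta_\m{A}$, mediated by passing to the relatively generated congruences $\cg(a_k,b_k)$ and exploiting their minimality among relative congruences through a given pair. The remaining ingredients, namely the representation of $\m{A}$ as a subobject of a product of ultraproducts, the ultrafilter lemma, and the standard closure identity $\pu\pu\subseteq\iso\pu$, are routine.
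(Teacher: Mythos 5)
Your proof is correct: the reduction of injectivity of the map into the ultraproduct to the finite intersection property of the sets $I\setminus S_{a,b}$, and the use of meet irreducibility of $\Delta_\m{A}$ among relative congruences (via the minimality of $\cg(a_k,b_k)$) to establish that property, is exactly the substance of this lemma. The paper itself gives no proof but cites Czelakowski--Dziobiak, and your argument is essentially the standard one found there, so there is nothing to flag beyond noting that the quasivariety setting is precisely what lets you avoid any congruence-distributivity hypothesis needed in the classical J\'{o}nsson Lemma.
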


For any quasivariety $\Q$, any $\m{A}\in\Q$, and any $R\subseteq A^2$, we denote the least relative congruence of $\m{A}$ containing $R$ by $\cg(R)$. A given quasivariety $\Q$ is said to have \emph{equationally definable principal meets} (or \emph{\prp{EDPM}}) if there exist finitely many pairs of four-variable terms $(p_i(x,y,z,w),q_i(x,y,z,u))$, $i\in\{1,\ldots,n\}$, such that for any $\m{A}\in\Q$ and any elements $a,b,c,d\in A$,
\[
\cg(a,b)\cap\cg(c,d) = \cg(\{(p_i(a,b,c,d),q_i(a,b,c,d) \mid 1\leq i\leq n\}).
\]
The next two lemmas will be used in tandem in the proof of the key Lemma~\ref{lem:with the ap}.

\begin{lemma}[{\cite[Corollary~2.6]{Czel1990}}]\label{lem:eneveloping variety}
Let $\Q$ be any quasivariety such that $\vr(\Q)$ is congruence distributive. Then $\Q$ has \prp{EDPM} if and only if $\Q=\quas(\K)$ for some class $\K$ such that $\ispu(\K)\subseteq\vr(\K)_\mathrm{FSI}$.
\end{lemma}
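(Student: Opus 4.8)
The plan is to prove both implications by reducing \prp{EDPM} to a structural condition on the class $\Q_{\mathrm{RFSI}}$ of relatively finitely subdirectly irreducible members of $\Q$, and then using congruence distributivity of $\vr(\Q)$ to reconcile this with the stated condition on $\vr(\K)_{\mathrm{FSI}}$. Throughout I would use two standard facts: that $\Q=\quas(\Q_{\mathrm{RFSI}})$, and the Relativized J\'onsson Lemma above, which gives that every nontrivial member of $\quas(\K)_{\mathrm{RFSI}}$ lies in $\ispu(\K)$. The key preliminary is a reformulation of \prp{EDPM}. Since $\m{A}\in\Q$ is relatively finitely subdirectly irreducible exactly when any two nontrivial principal relative congruences have nontrivial meet, and since \prp{EDPM} via pairs $(p_i,q_i)$ makes $\cg(a,b)\cap\cg(c,d)$ the relative congruence generated by the pairs $(p_i(a,b,c,d),q_i(a,b,c,d))$, this meet is $\Delta_{\m{A}}$ iff all those pairs are trivial. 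Thus $\m{A}\in\Q_{\mathrm{RFSI}}$ iff $\m{A}$ satisfies the single universal sentence
\[
\forall x\,y\,z\,w\ \Big(\, x\approx y\ \lor\ z\approx w\ \lor\ \bigvee_{i=1}^{n} p_i(x,y,z,w)\not\approx q_i(x,y,z,w)\,\Big),
\]
so \prp{EDPM} forces $\Q_{\mathrm{RFSI}}$ to be a universal subclass of $\Q$, hence closed under $\sub$ and $\pu$. Dually, I would record that \prp{EDPM} is equivalent to compactness of the relative congruence $\cg(x,y)\cap\cg(z,w)$ in the relatively free algebra $\m{F}_\Q(x,y,z,w)$: a finite generating set yields the required term pairs, and the usual substitution argument transports the identity to every $\m{A}\in\Q$.

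For the forward implication I would take $\K=\Q_{\mathrm{RFSI}}$. The universal sentence shows $\ispu(\K)=\K$, and $\Q=\quas(\K)$ gives $\vr(\K)=\vr(\Q)$, which is congruence distributive. It then remains to verify $\Q_{\mathrm{RFSI}}\subseteq\vr(\Q)_{\mathrm{FSI}}$. This is the step where congruence distributivity is essential, since a relatively finitely subdirectly irreducible algebra need not be finitely subdirectly irreducible in the enveloping variety: relative congruences dominate absolute ones, so the relative meet may be nontrivial while the absolute meet collapses. I would close this gap by transferring \prp{EDPM} from $\Q$ to $\vr(\Q)$: using J\'onsson's Lemma and congruence distributivity, the same terms $(p_i,q_i)$ define meets of absolute principal congruences throughout $\vr(\Q)$, so that $\vr(\Q)_{\mathrm{FSI}}$ is defined by the very same universal sentence; since each $\m{A}\in\Q_{\mathrm{RFSI}}$ satisfies it, each such $\m{A}$ lies in $\vr(\Q)_{\mathrm{FSI}}$.

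For the converse, suppose $\Q=\quas(\K)$ with $\ispu(\K)\subseteq\vr(\K)_{\mathrm{FSI}}$. The Relativized J\'onsson Lemma gives $\Q_{\mathrm{RFSI}}\subseteq\ispu(\K)$, while any member of $\ispu(\K)$, being finitely subdirectly irreducible in $\vr(\K)$, is a fortiori relatively finitely subdirectly irreducible in $\Q$; hence $\Q_{\mathrm{RFSI}}=\ispu(\K)$, a class closed under $\sub$ and $\pu$. I would then produce the defining terms by a compactness argument: working in $\m{F}_\Q(x,y,z,w)$, I would show the meet $\cg(x,y)\cap\cg(z,w)$ is compact. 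Because $\Q_{\mathrm{RFSI}}$ is closed under ultraproducts and $\vr(\Q)$ is congruence distributive, membership in principal congruences is captured by the principal congruence formulas available in a congruence-distributive variety, and a \L o\'s-style argument then bounds the meet by finitely many pairs. By the reformulation, these yield term pairs $(p_i,q_i)$ witnessing \prp{EDPM}.

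The principal obstacle throughout is the discrepancy between relative and absolute congruences: the hypotheses are phrased with the absolute notion $\vr(\K)_{\mathrm{FSI}}$, whereas \prp{EDPM} concerns the relative operator $\cg$. Congruence distributivity of $\vr(\Q)$ is the tool that bridges this, both by letting J\'onsson's Lemma pin down the finitely subdirectly irreducibles and by supplying congruence formulas through which term-definability can be moved between $\Q$ and $\vr(\Q)$. I expect the compactness argument in the converse---converting closure under ultraproducts into the genuine finiteness of a generating set, and hence into finitely many term pairs---to be the most delicate point.
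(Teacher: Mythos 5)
First, a caveat: the paper offers no proof of this lemma at all---it is imported verbatim from Czelakowski and Dziobiak \cite[Corollary~2.6]{Czel1990}---so your argument can only be measured against the known proof of that result, not against anything in the paper.

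In the forward direction, your choice $\K=\Qrfsi$, the universal sentence describing $\Qrfsi$ inside $\Q$, and the observation $\vr(\K)=\vr(\Q)$ are all correct, but the step you yourself flag as the crux---$\Qrfsi\subseteq\vr(\Q)_{\mathrm{FSI}}$---rests on the unproved assertion that the same terms $(p_i,q_i)$ define meets of \emph{absolute} principal congruences throughout $\vr(\Q)$, i.e.\ that the variety $\vr(\Q)=\hsp(\Q)$ has \prp{EDPM}. Neither J\'onsson's Lemma nor congruence distributivity delivers this: \prp{EDPM} is a congruence-generation property with no reason to survive the homomorphic images introduced by $\hm$, and the half of your claimed equivalence that you actually use (``satisfies the universal sentence $\Rightarrow$ absolutely FSI'') is exactly the hard inclusion $\Theta_{\m{A}}(a,b)\cap\Theta_{\m{A}}(c,d)\subseteq\Theta_{\m{A}}(\{(p_i,q_i)\text{-instances}\})$ in arbitrary members of $\vr(\Q)$, which you never establish. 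The gap has an elementary repair that needs no congruence distributivity: applying \prp{EDPM} with $x=y$ (resp.\ $z=w$) shows that $\Q$ satisfies the \emph{equations} $p_i(x,x,z,w)\approx q_i(x,x,z,w)$ and $p_i(x,y,z,z)\approx q_i(x,y,z,z)$; equations persist in $\vr(\Q)$, so for any $\m{A}\in\Q$ the pairs $(p_i(a,b,c,d),q_i(a,b,c,d))$ lie in $\Theta_{\m{A}}(a,b)\cap\Theta_{\m{A}}(c,d)$ (pass to $\m{A}/\Theta_{\m{A}}(a,b)\in\hm(\Q)\subseteq\vr(\Q)$ and use the first equation, and symmetrically); if moreover $\m{A}$ is RFSI and $a\neq b$, $c\neq d$, then \prp{EDPM} forces some instance to be a nontrivial pair, so the absolute meet is nontrivial and $\m{A}$ is FSI.

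In the converse direction, the identification of $\Qrfsi$ with $\ispu(\K)$ (modulo trivial algebras) is correct, but both devices you invoke to manufacture the terms are unavailable as stated. Your ``reformulation'' of \prp{EDPM} as compactness of $\mathrm{Cg}^{\Q}(x,y)\cap\mathrm{Cg}^{\Q}(z,w)$ in $\m{F}_\Q(x,y,z,w)$ is not an equivalence you have proved: the ``usual substitution argument'' transports only the easy inclusion (generated pairs lie in the meet), whereas the other inclusion concerns pairs $(e,f)$ lying \emph{outside} the subalgebra generated by $a,b,c,d$, and carrying it from the four-generated free algebra to an arbitrary member of $\Q$ requires a correspondence-plus-distributivity computation in the lattice of \emph{relative} congruences; relative congruence distributivity is part of the \emph{conclusion} of \prp{EDPM} (cf.\ Lemma~\ref{lem:czel universal}), not a hypothesis, and congruence distributivity of $\vr(\Q)$ only governs absolute joins, which are in general smaller than relative ones. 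Likewise, ``membership in principal congruences is captured by the principal congruence formulas available in a congruence-distributive variety'' is incorrect as stated: definable principal congruences is \prp{EDPC}, a far stronger property that congruence-distributive varieties (e.g.\ lattices) generally lack; what one has in any variety is an infinite directed family of Mal'cev-chain formulas of unbounded complexity, and an ultraproduct argument handles each formula separately but not the infinite disjunction. Converting closure of $\ispu(\K)$ under $\sub$ and $\pu$ into a \emph{finite} bound across that family---hence into finitely many term pairs---is precisely the hard core of the Czelakowski--Dziobiak proof, and as sketched your argument is circular at exactly this point: the ultraproduct of failure-witnesses yields an algebra in $\ispu(\K)\subseteq\vr(\K)_{\mathrm{FSI}}$ in which all candidate pairs collapse while $a\neq b$ and $c\neq d$, but this contradicts finite subdirect irreducibility only if one already knows that such term pairs generate the meet---which is the statement being proved. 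In short, your skeleton (take $\K=\Qrfsi$; relativized J\'onsson plus a compactness argument) matches the standard route, but the two load-bearing technical steps are asserted rather than proven, and one of them is routed through a false principle.
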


\begin{lemma}[{\cite[Theorem~2.3]{Czel1990}}]\label{lem:czel universal}
Let $\Q$ be a quasivariety with \prp{EDPM}. Then $\Q$ is congruence distributive and $\Qrfsi$ forms a universal class.
\end{lemma}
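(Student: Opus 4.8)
The plan is to establish the two assertions separately: first that the lattice $\mathrm{Con}_\Q(\m{A})$ of relative congruences of each $\m{A}\in\Q$ is distributive, and then that $\Qrfsi$ is a universal class. Throughout, I would write $\Phi(a,b,c,d)=\{(p_i(a,b,c,d),q_i(a,b,c,d)):1\le i\le n\}$ for the finite set of pairs furnished by \prp{EDPM}, so that $\cg(a,b)\cap\cg(c,d)=\cg(\Phi(a,b,c,d))$ for all $a,b,c,d\in A$.

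For congruence distributivity, I would first reduce to the compact elements. Since $\mathrm{Con}_\Q(\m{A})$ is algebraic, its compact elements are exactly the finitely generated relative congruences, each a finite join of principal congruences $\cg(a,b)$; moreover $\mathrm{Con}_\Q(\m{A})$ is meet-continuous, so meets commute with directed joins. Writing arbitrary relative congruences as directed joins of compacts and applying meet-continuity twice, the distributive law $\theta\cap(\phi\vee\psi)=(\theta\cap\phi)\vee(\theta\cap\psi)$ for all relative congruences follows once it is known for compact $\theta,\phi,\psi$. The heart of the matter is therefore the distributive law for finitely generated relative congruences, and this is where \prp{EDPM} does the work: the aim is to prove that for finite sets of pairs $R,S$,
\begin{equation*}
\cg(R)\cap\cg(S)=\cg\left(\bigcup\{\Phi(a,b,c,d):(a,b)\in R,\ (c,d)\in S\}\right),
\end{equation*}
since the right-hand side is manifestly $\bigvee\{\cg(a,b)\cap\cg(c,d):(a,b)\in R,\ (c,d)\in S\}$ and thus exhibits the meet as distributing over the join. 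The inclusion $\supseteq$ is immediate from \prp{EDPM} and monotonicity; the inclusion $\subseteq$ is the crux, requiring one to push the meet through the join by combining the defining terms $p_i,q_i$ with a description of how the relative congruence $\cg(R)$ is generated from $R$.

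For the universality of $\Qrfsi$, I would use the characterization of relative finite subdirect irreducibility via meet-irreducibility of $\Delta_{\m{A}}$ and then render it first-order through \prp{EDPM}. Recall that $\m{A}\in\Qrfsi$ precisely when $\m{A}\in\Q$ and $\Delta_{\m{A}}$ is meet-irreducible in $\mathrm{Con}_\Q(\m{A})$; a routine argument reduces this to principal congruences, showing it equivalent to the nonexistence of $a\neq b$ and $c\neq d$ with $\cg(a,b)\cap\cg(c,d)=\Delta_{\m{A}}$. Now \prp{EDPM} turns this meet condition into an equational one: since $\cg(S)=\Delta_{\m{A}}$ iff $S\subseteq\Delta_{\m{A}}$, we have $\cg(a,b)\cap\cg(c,d)=\Delta_{\m{A}}$ iff $p_i(a,b,c,d)=q_i(a,b,c,d)$ for every $i$. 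Consequently $\Qrfsi$ is axiomatized, relative to the quasi-equations defining $\Q$, by the single universal sentence
\begin{equation*}
\forall x,y,z,w\ \left[\left(\bigwedge_{i=1}^n p_i(x,y,z,w)\approx q_i(x,y,z,w)\right)\to\left(x\approx y\ \lor\ z\approx w\right)\right].
\end{equation*}
As the quasi-equations defining $\Q$ and this sentence are all universal first-order sentences, $\Qrfsi$ is axiomatized by universal sentences and is therefore a universal class (equivalently, it is closed under $\iso$, $\sub$, and $\pu$); the trivial algebra satisfies the displayed sentence vacuously, in agreement with the convention that it lies in $\Qrfsi$.

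I expect the distributivity half to be the main obstacle. Whereas the universality of $\Qrfsi$ is a clean translation of meet-irreducibility into an equational condition via the \prp{EDPM} terms, establishing the distributive law requires genuinely exploiting those terms to compute meets of finitely generated congruences, that is, to show that definability of \emph{principal} meets propagates to meets against arbitrary joins rather than single pairs. Once the compact case is secured, the passage to the full lattice is routine by algebraicity and meet-continuity.
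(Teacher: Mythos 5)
The paper offers no proof of this lemma at all---it is imported wholesale from Czelakowski--Dziobiak \cite[Theorem~2.3]{Czel1990}---so your attempt can only be judged against that source (or any correct argument). Your second half is correct and complete: reducing meet-irreducibility of $\Delta_{\m{A}}$ to principal relative congruences is indeed routine, \prp{EDPM} turns the condition $\cg(a,b)\cap\cg(c,d)=\Delta_{\m{A}}$ into the first-order condition $\bigwedge_{i=1}^{n}p_i\approx q_i$, and hence $\Qrfsi$ is the class of models of the quasiequations axiomatizing $\Q$ together with your displayed universal sentence, so it is closed under $\iso$, $\sub$, and $\pu$. That is the standard argument.

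The distributivity half, however, contains a genuine gap, and you flag it yourself. Your identity for $\cg(R)\cap\cg(S)$ with $R,S$ finite is not a stepping stone toward compact-level distributivity---it \emph{is} compact-level distributivity, merely rewritten via \prp{EDPM} (both sides are joins of pairwise principal meets), so declaring its left-to-right inclusion ``the crux'' leaves the entire content of the first assertion unproven. Worse, the route you propose---combining the terms $p_i,q_i$ with ``a description of how $\cg(R)$ is generated from $R$''---is a dead end: relative congruence generation has no Maltsev-chain description, since $\cg(R)$ is the least \emph{relative} congruence containing $R$ and not the transitive closure of anything explicit; this is exactly why the theorem needs an idea. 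The idea is to run your two halves in the opposite order and derive distributivity \emph{from} the RFSI characterization. From \prp{EDPM} and your universal sentence one gets: if $\gamma$ is a relative congruence of $\m{A}$ with $\m{A}/\gamma\in\Qrfsi$, then $\cg(a,b)\cap\cg(c,d)\subseteq\gamma$ forces $(a,b)\in\gamma$ or $(c,d)\in\gamma$. Now suppose $(x,y)\in\theta\cap(\phi\vee\psi)$ but $(x,y)\notin(\theta\cap\phi)\vee(\theta\cap\psi)$. Since the relative congruence lattice is algebraic, every relative congruence is an intersection of relative congruences that are completely meet-irreducible in that lattice, so there is such a $\gamma$ containing $(\theta\cap\phi)\vee(\theta\cap\psi)$ with $(x,y)\notin\gamma$; by the correspondence theorem $\m{A}/\gamma\in\Qrfsi$. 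For each $(c,d)\in\phi$ we have $\cg(x,y)\cap\cg(c,d)\subseteq\theta\cap\phi\subseteq\gamma$, and since $(x,y)\notin\gamma$ this forces $(c,d)\in\gamma$; hence $\phi\subseteq\gamma$, and symmetrically $\psi\subseteq\gamma$, so $\phi\vee\psi\subseteq\gamma$ because $\gamma$ is a relative congruence---contradicting $(x,y)\in\phi\vee\psi$. This proves distributivity for arbitrary relative congruences outright and renders your meet-continuity reduction (which, incidentally, silently uses that directed unions of relative congruences are again relative congruences) unnecessary.
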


We conclude our treatment of general algebraic preliminaries with the following useful equivalences for testing the \prp{AP} and \prp{TIP}.

\begin{lemma}[{\cite[Theorem~3.4]{Fussner2024}}]\label{lem:transfer}
Let $\Q$ be any quasivariety with the \prp{RCEP} such that $\Qrfsi$ is closed under subalgebras. Then $\Q$ has the \prp{AP} if and only if every span of algebras in $\Q_\mathrm{FG}\cap\Qrfsi$ has an amalgam in $\Q$.
\end{lemma}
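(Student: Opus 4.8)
The forward implication is immediate: if $\Q$ has the \prp{AP}, then in particular every span of algebras in $\Q_\mathrm{FG}\cap\Qrfsi$ has an amalgam in $\Q$. The content lies in the converse, which I would establish by two successive reductions. The plan is to first cut down to finitely generated algebras by a general argument, and then to cut down further to the relatively finitely subdirectly irreducible case, where both hypotheses on $\Q$ are brought to bear.

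For the first reduction, I would argue by a routine compactness argument, valid in any quasivariety, that $\Q$ has the \prp{AP} as soon as every span of \emph{finitely generated} members of $\Q$ has an amalgam in $\Q$. Given an arbitrary span $\langle\fb\colon\m{A}\emd\m{B},\fc\colon\m{A}\emd\m{C}\rangle$, I would consider the directed family of its finitely generated subspans---triples $\langle\m{A}',\m{B}',\m{C}'\rangle$ with $\m{A}'\subseteq\m{A}$, $\m{B}'\subseteq\m{B}$, $\m{C}'\subseteq\m{C}$ finitely generated and $\fb(\m{A}')\subseteq\m{B}'$, $\fc(\m{A}')\subseteq\m{C}'$---amalgamate each in $\Q$, and glue the resulting amalgams together in an ultraproduct taken over an ultrafilter refining the order filter on the index set. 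Since $\Q$ is closed under $\pu$, the result lies in $\Q$, and the maps induced out of $\m{B}$ and $\m{C}$ are embeddings commuting over $\m{A}$.

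For the second reduction I would exploit both hypotheses. I would reformulate the \prp{AP} for a finitely generated span $\langle\m{A}\emd\m{B},\m{A}\emd\m{C}\rangle$ in terms of the pushout $\m{P}$ of the span in $\Q$: it suffices to show the canonical maps $\m{B}\to\m{P}$ and $\m{C}\to\m{P}$ are injective. By the universal property of $\m{P}$, to separate two distinct points of $\m{B}$ it is enough to produce \emph{any} compatible pair of homomorphisms $\beta\colon\m{B}\to\m{E}$, $\gamma\colon\m{C}\to\m{E}$ with $\m{E}\in\Q$, $\beta\fb=\gamma\fc$, and $\beta$ separating the two points; and projecting onto a relative subdirectly irreducible quotient we may take $\m{E}\in\Qrfsi$. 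To build such a pair, I would fix a relative subdirect decomposition of $\m{B}$ into finitely generated members $\m{B}_0$ of $\Q_\mathrm{RSI}\subseteq\Qrfsi$ separating the chosen points; the image $\m{A}_0$ of $\m{A}$ in $\m{B}_0$ is then a subalgebra of $\m{B}_0$ and so, by the hypothesis that $\Qrfsi$ is closed under subalgebras, again lies in $\Q_\mathrm{FG}\cap\Qrfsi$. I would then invoke the \prp{RCEP} to realize the restriction to $\m{A}$ of the relevant factor congruence on the $\m{C}$ side as well, align the two sides over the common base $\m{A}_0$, apply the hypothesis to the resulting $\Q_\mathrm{FG}\cap\Qrfsi$ span to obtain an amalgam $\m{D}$, and finally project $\m{D}$ onto an RFSI quotient separating the (still distinct, since $\m{B}_0\emd\m{D}$ is an embedding) images of the two points.

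The main obstacle is exactly this alignment step. When one restricts a factor congruence to the shared subalgebra $\m{A}$ and transports it across the opposite embedding, the resulting quotient of an RFSI factor need not itself remain relatively finitely subdirectly irreducible, and the amalgamation bases arising from the two subdirect decompositions need not coincide on the nose; reconciling them without destroying relative subdirect irreducibility is the delicate point. This is precisely where the two hypotheses are indispensable: the closure of $\Qrfsi$ under subalgebras guarantees that the images of $\m{A}$ stay inside the target class $\Q_\mathrm{FG}\cap\Qrfsi$, while the \prp{RCEP} allows a congruence prescribed on $\m{A}$ to be realized simultaneously on $\m{B}$ and on $\m{C}$, so that a single RFSI span can be extracted. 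The remainder of the argument is bookkeeping to verify that the separating pair of homomorphisms is genuinely compatible over $\m{A}$.
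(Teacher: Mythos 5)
Your two reductions are the right ones, and they match the overall structure of the proof in the cited source (note that the paper itself does not prove this lemma; it quotes it from Fussner--Metcalfe): first cut down to finitely generated spans, then argue pointwise, separating pairs of distinct elements of $\m{B}$ by a compatible pair of homomorphisms into some member of $\Q$. Your step of passing to a relatively subdirectly irreducible quotient $\m{B}_0=\m{B}/\theta$ separating the two chosen points, and observing that the image $\m{A}_0$ of $\m{A}$ in $\m{B}_0$ lies in $\Q_\mathrm{FG}\cap\Qrfsi$ by closure of $\Qrfsi$ under subalgebras, is also exactly right. The genuine gap is the step you yourself call ``the delicate point'': you never actually produce an algebra on the $\m{C}$ side that is \emph{both} a quotient of $\m{C}$ whose restriction to (the image of) $\m{A}$ is the prescribed congruence $\alpha$ \emph{and} relatively finitely subdirectly irreducible. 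The \prp{RCEP} only yields \emph{some} relative congruence $\psi$ of $\m{C}$ with $\psi\cap\fc[A]^2$ equal to the transport of $\alpha$; in general $\m{C}/\psi\notin\Qrfsi$, and decomposing $\m{C}/\psi$ further subdirectly destroys the compatibility over $\m{A}_0$, as you note. Asserting that the two hypotheses allow ``a single RFSI span'' to be extracted, with the remainder ``bookkeeping,'' is not a proof: that extraction is the main content of the theorem, not a routine verification.

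The missing idea is a maximality argument. Among all relative congruences of $\m{C}$ whose restriction to $\fc[A]$ equals the transport of $\alpha$ (a nonempty set, by the \prp{RCEP}), choose by Zorn's lemma a maximal one, $\psi$. Then $\m{C}/\psi$ \emph{is} relatively finitely subdirectly irreducible, and the proof of this claim is precisely where $\m{A}_0\in\Qrfsi$ is used: if $\psi_1,\psi_2$ were relative congruences of $\m{C}$ strictly containing $\psi$ with $\psi_1\cap\psi_2=\psi$, then by maximality each $\psi_i$ would restrict on $\m{A}_0$ to a relative congruence strictly above the identity, while their intersection restricts to the identity, contradicting the meet irreducibility of $\Delta_{\m{A}_0}$ in the lattice of relative congruences of $\m{A}_0$. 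With this claim in hand, $\langle\m{A}_0\emd\m{B}_0,\m{A}_0\emd\m{C}/\psi\rangle$ is a span in $\Q_\mathrm{FG}\cap\Qrfsi$ (quotients of finitely generated algebras are finitely generated), the hypothesis of the lemma applies to give an amalgam $\m{D}\in\Q$, and composing with the quotient maps yields the separating compatible pair you need. So your outline is repairable, but as written it stops exactly where the real work begins.
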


\begin{lemma}[{\cite[Proposition~4.1]{Fussner2024}}]\label{lem:tip}
Let $\Q$ be any quasivariety. Then $\Q$ has the \prp{TIP} if and only if $\Q$ has the \prp{RCEP} and the \prp{AP}.
\end{lemma}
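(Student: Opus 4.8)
The plan is to prove the two directions separately: the forward implication (\prp{TIP} $\Rightarrow$ \prp{RCEP} and \prp{AP}) by direct diagram manipulation, and the converse (\prp{RCEP} and \prp{AP} $\Rightarrow$ \prp{TIP}) by first extending a relative congruence from $\m{A}$ to $\m{B}$ and then invoking amalgamation. For the \prp{RCEP} half of the forward direction, suppose $\Q$ has the \prp{TIP}, let $\m{A}$ be a subalgebra of $\m{B}\in\Q$ via the inclusion $\fb$, and let $\Theta$ be a relative congruence of $\m{A}$, so that the quotient map $\nu_\Theta\colon\m{A}\to\m{A}/\Theta$ has codomain in $\Q$. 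Applying the \prp{TIP} to the half span $\langle\fb\colon\m{A}\emd\m{B},\nu_\Theta\colon\m{A}\to\m{A}/\Theta\rangle$ yields $\m{D}\in\Q$, a homomorphism $\gb\colon\m{B}\to\m{D}$, and an embedding $\gc\colon\m{A}/\Theta\emd\m{D}$ with $\gb\fb=\gc\nu_\Theta$. I would then set $\Psi:=\ker\gb$; since $\m{B}/\Psi$ is isomorphic to a subalgebra of $\m{D}\in\Q$, this $\Psi$ is a relative congruence of $\m{B}$. Using injectivity of $\gc$, for $a,a'\in A$ one has $(a,a')\in\Psi$ iff $\gc\nu_\Theta(a)=\gc\nu_\Theta(a')$ iff $(a,a')\in\Theta$, so $\Theta=\Psi\cap A^2$, giving the \prp{RCEP}.

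For \prp{AP}, the slick point is to apply the \prp{TIP} to \emph{both} orientations of a span $\langle\fb\colon\m{A}\emd\m{B},\fc\colon\m{A}\emd\m{C}\rangle$ and then take a product. Applying the \prp{TIP} to $\langle\fb,\fc\rangle$ produces $\m{D}_1\in\Q$ with a homomorphism $p\colon\m{B}\to\m{D}_1$ and an embedding $q\colon\m{C}\emd\m{D}_1$ satisfying $p\fb=q\fc$; applying it to the reversed half span $\langle\fc,\fb\rangle$ produces $\m{D}_2\in\Q$ with an embedding $r\colon\m{B}\emd\m{D}_2$ and a homomorphism $s\colon\m{C}\to\m{D}_2$ satisfying $r\fb=s\fc$. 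Setting $\m{D}:=\m{D}_1\times\m{D}_2\in\Q$, I would form the homomorphisms $b\mapsto(p(b),r(b))$ and $c\mapsto(q(c),s(c))$ into $\m{D}$. These agree on $\m{A}$ coordinatewise, and each is an embedding---the first because $r$ is injective, the second because $q$ is injective---so they constitute an amalgam and witness the \prp{AP}. Together with the preceding paragraph this establishes the forward implication.

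For the converse, given a half span $\langle\fb\colon\m{A}\emd\m{B},\fc\colon\m{A}\to\m{C}\rangle$, set $\Theta:=\ker\fc$, a relative congruence of $\m{A}$ since $\m{A}/\Theta\cong\fc(\m{A})\le\m{C}\in\Q$. Viewing $\m{A}$ as a subalgebra of $\m{B}$ along $\fb$, the \prp{RCEP} furnishes a relative congruence $\Psi$ of $\m{B}$ with $\Psi\cap A^2=\Theta$. The crucial consequence of this equality (rather than mere containment) is that the kernel of $\nu_\Psi\fb\colon\m{A}\to\m{B}/\Psi$ is exactly $\Theta$, so $\nu_\Psi\fb$ factors as $\iota\,\nu_\Theta$ for an embedding $\iota\colon\m{A}/\Theta\emd\m{B}/\Psi$; likewise $\fc$ factors as $j\,\nu_\Theta$ for an embedding $j\colon\m{A}/\Theta\emd\m{C}$. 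Now $\langle\iota,j\rangle$ is a genuine span in $\Q$, so the \prp{AP} yields $\m{D}\in\Q$ and embeddings $u\colon\m{B}/\Psi\emd\m{D}$, $v\colon\m{C}\emd\m{D}$ with $u\iota=vj$. I would then set $\gc:=v$ (an embedding) and $\gb:=u\nu_\Psi$ (a homomorphism), whereupon $\gb\fb=u\nu_\Psi\fb=u\iota\nu_\Theta=vj\nu_\Theta=v\fc=\gc\fc$ shows $\langle\gb,\gc\rangle$ witnesses the \prp{TIP}.

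I expect the delicate point to be this last direction, specifically the need for the \prp{RCEP} to deliver a congruence $\Psi$ with $\Psi\cap A^2$ equal to---and not merely containing---$\Theta$: this equality is exactly what forces the induced map $\iota$ to be injective, and hence what makes $\langle\iota,j\rangle$ a span to which the \prp{AP} can be applied. Everything else reduces to routine diagram chases together with the closure of quasivarieties under subalgebras, products, and isomorphic images.
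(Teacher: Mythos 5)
Your proof is correct, and both halves are the standard argument for this result: the kernel-of-$\gb$ extraction for \prp{RCEP}, the two-orientations-plus-product trick for \prp{AP}, and, for the converse, factoring the half span through $\m{B}/\Psi$ where exactness of $\Psi\cap A^2=\Theta$ makes the induced map $\iota$ injective so that \prp{AP} applies. Note that the paper itself offers no proof---it cites the statement as Proposition~4.1 of Fussner--Metcalfe (2024)---and your argument is essentially the proof appearing there (a relativization to quasivarieties of Bacsich's classical characterization of transferable injections), so there is nothing to correct or compare beyond that.
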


\subsection{Sugihara algebras and Sugihara monoids}
We now turn to the classes of algebras that will be of primary interest to our study. First, a \emph{commutative involutive residuated lattice} is an algebra $\m{A} = \langle A,\meet,\join,\cdot,\to,\neg,\ut\rangle$ such that:
\begin{itemize}
\item $\langle A,\meet,\join\rangle$ is a lattice.
\item $\langle A,\cdot,\ut\rangle$ is a commutative monoid.
\item For all $x,y,z\in A$,
\[
x\cdot y\leq z \iff x\leq y\to z.
\]
\item For all $x,y\in A$, $x\to\neg y = y\to\neg x$.
\end{itemize}
In commutative involutive residuated lattices, we most often write $x\cdot y$ as simply $xy$. 

A commutative involutive residuated lattice is called a \emph{Sugihara monoid} provided that multiplication $\cdot$ is idempotent---that is, $x^2\eq x$ holds---and the lattice $\langle A,\meet,\join\rangle$ is distributive. A Sugihara monoid is called \emph{odd} if it satisfies $\neg\ut \approx\ut$. \emph{Sugihara algebras} are the $\ut$-free subreducts of Sugihara monoids. Each of the classes of Sugihara monoids and Sugihara algebras comprises a variety with the \prp{CEP}. We denote these varieties by $\SM$ and $\SA$, respectively. 

We define an algebra $\Z = \langle\mathbb{Z},\meet,\join,\cdot,\to,\neg\rangle$, where $\meet$ and $\join$ are the operations of binary minimum and maximum with respect to the usual order on $\mathbf{Z}$, $\neg x = -x$ is additive inversion, and $\cdot$ and $\to$ are given by the following formulas:

\[ x\cdot y = \begin{cases} 
      x & |x| > |y| \\
      y & |x| < |y| \\
      x\meet y & |x|=|y|, 
   \end{cases}
\]
\[ x\to y = \begin{cases} 
      (-x)\join y & x\leq y \\
      (-x)\meet y & x\not\leq y.
   \end{cases}
\]
Here $|x|=x\to x$ is the usual absolute value function defined on the integers, and one may readily check that the multiplication given above is the infimum with respect to the non-standard order on $\mathbb{Z}$ given by
\[
\cdots < -3 < 3 < -2 < 2 < -1 < 1 < 0.
\]
The algebra $\Z$ is a Sugihara algebra, and it may be expanded to a Sugihara monoid $\Z^0$ by including a constant designating the multiplicative unit $0$. Indeed, one may show that $\SA = \vr(\Z)$ and $\SM = \vr(\Z^0)$. Thus, both of $\SA$ and $\SM$ are semilinear in the sense that they are generated as varieties by totally ordered algebras.

We will denote the subalgebra of $\Z$ with universe $E=\mathbb{Z}-\{0\}$ by $\E$. Further, for each integer $n\geq 0$, each of the set $\{-2n-1,-2n,\ldots,-1,0,1,\ldots,2n,2n+1\}$ gives the universe of a algebra of $\Z$ that we denote $\Z_{2n+1}$, and, for $n\geq 1$, each of the sets $\{-2n-1,-2n,\ldots,-1,1,\ldots,2n,2n+1\}$ gives the universe of a subalgebra $\Z_{2n}$ of $\E$. It turns out that the lattice of subvarieties of $\SA$ forms a countable chain given by
\[
\vr(\Z_1)\subseteq\vr(\Z_2)\subseteq\vr(\Z_3)\subseteq\cdots\vr(\Z)=\vr(\E)=\SA.
\]
Further, $\SA = \quas(\Z) = \quas(\{\Z_n \mid n\geq 1\}) = \quas(\{\Z_{2n+1}\mid n\geq 0\})$, and also $\quas(\E) = \quas(\{\Z_{2n} \mid n\geq 1\})$ is a proper subquasivariety of $\SA$.

An element $x$ in a Sugihara algebra is called \emph{positive} if $|x|=x$. The (finitely) subdirectly irreducible and directly indecomposable, and we will use these classifications extensively.

\begin{lemma}[see, e.g., {\cite{FontPerez1992}}]\label{lem:totally ordered}
\begin{enumerate}
\item[]
\item A Sugihara algebra is finitely subdirectly irreducible if and only if it is totally ordered.
\item A nontrivial Sugihara algebra is subdirectly irreducible if and only if it is totally ordered, and additionally there is $u\in A$ such that $x\in A$ is positive if and only if $u< x$.
\item Every totally ordered Sugihara algebra of cardinality $n$ is isomorphic to $\Z_n$.
\end{enumerate}
\end{lemma}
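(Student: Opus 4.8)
The plan is to treat the three items in the order (3), (1), (2), since the normal form for finite chains in~(3) and the congruence-theoretic tools feed the (finite) subdirect irreducibility characterizations. For~(3), I would argue structurally. If $\m{A}$ is totally ordered with $|A|=n$, then $\neg$ is an order-reversing involution of the chain $A$, hence sends the $i$th element from the bottom to the $(n+1-i)$th; for odd $n$ there is a unique fixed point $z$ (necessarily positive, as $z=\neg z$ gives $|z|=z$), and for even $n$ none. Idempotency of $\cdot$ together with the residuation law then forces $x\cdot y$ to be the minimum in the twisted order $\cdots<-2<2<-1<1<0$ transported to $A$, and $\to$ is determined from $\cdot$ by residuation. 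Sending the fixed point (if present) to $0$ and the $k$th element above (resp.\ below) it to $k$ (resp.\ $-k$) gives an isomorphism onto $\Z_n$; this is a routine verification.

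For the forward direction of~(1), I would invoke the Relativized J\'onsson Lemma together with the facts $\SA=\quas(\Z)$ and that $\Z$ is totally ordered. Since $\SA$ is a variety, $\SA_{\mathrm{FSI}}=\quas(\Z)_{\mathrm{RFSI}}$, so the lemma gives $\SA_{\mathrm{FSI}}\subseteq\ispu(\Z)$. Being a chain is expressible by first-order sentences (with $x\leq y$ defined by $x\meet y\eq x$), so it is preserved under ultraproducts and inherited by subalgebras and isomorphic copies; hence every member of $\ispu(\Z)$, and thus every finitely subdirectly irreducible Sugihara algebra, is totally ordered (the trivial algebra being vacuously so). For the converse I would examine $\Con{\m{A}}$ directly. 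Congruence classes of a Sugihara algebra are convex, and on a chain the nontrivial congruences are exactly the $\neg$-symmetric collapses of central intervals $[a,\neg a]$ with $a$ non-positive; these are nested, so $\Con{\m{A}}$ is a chain. In a chain the bottom element is meet-irreducible, so $\Delta_{\m{A}}$ is meet-irreducible and $\m{A}$ is finitely subdirectly irreducible.

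For~(2) I would build on this chain structure. Subdirect irreducibility is complete meet-irreducibility of $\Delta_{\m{A}}$, which in a chain is equivalent to the existence of an atom, i.e.\ of a least nontrivial congruence. By the description above such a congruence exists exactly when there is an innermost collapsible pair $\{a,\neg a\}$, that is, when the non-positive elements have a greatest element $u$; since in a chain the positive elements form an up-set and the non-positive ones a down-set, this says precisely that $x$ is positive if and only if $u<x$. Conversely, given such a $u$, the congruence $\mathrm{Cg}_{\m{A}}(u,\neg u)$ is the monolith, so $\m{A}$ is subdirectly irreducible; with~(1) this gives the equivalence.

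The main obstacle is the congruence description invoked in the converse of~(1) and in~(2): proving, in the unit-free setting, that every congruence of a totally ordered Sugihara algebra is a $\neg$-symmetric central collapse. The standard residuated-lattice arguments run through the congruence class of the monoid unit $\ut$, which need not belong to a Sugihara algebra. I would instead use $|x|=x\to x$ and the positive cone as a surrogate for $\ut$, showing that a congruence is determined by the lattice filter of positive elements that it collapses and that such filters are linearly ordered by inclusion. This is precisely where the analysis departs from the Sugihara monoid case, and where I would lean on (or redevelop in $\ut$-free form) the congruence calculus of Font and P\'erez.
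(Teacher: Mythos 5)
The paper itself offers no proof of this lemma: it is quoted as known background with a pointer to Font and Rodr\'{i}guez P\'{e}rez, so the comparison here is with the cited literature rather than with an in-paper argument. Your outline is essentially a reconstruction of that standard proof, and its architecture is sound: the normal form for finite chains in (3), the reduction of the converse of (1) and of (2) to a description of the congruence lattice of a totally ordered Sugihara algebra, and the monolith criterion for (2) (completely meet-irreducible $\Delta_{\m{A}}$ in a chain of congruences amounts to the existence of a least nontrivial congruence, which exists exactly when the non-positive elements have a maximum $u$, using that positives form an up-set via $|x|=x\join\neg x$). Your use of the Relativized J\'{o}nsson Lemma with $\SA=\quas(\Z)$ for the forward half of (1) is correct as stated and pleasantly short; the one caution is that in the literature the identity $\SA=\quas(\Z)$ is usually obtained downstream of the structure theory you are proving, so if this lemma is meant as foundational you should check that the version of $\SA=\quas(\Z)$ you invoke (e.g., via Blok--Dziobiak's direct-decomposition results rather than via subdirect irreducibility) does not already presuppose items (1)--(3). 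Within the paper's expository order, where $\SA=\quas(\Z)$ is stated beforehand, this is acceptable.

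The substantive IOU is the one you flag yourself: the claim that every nontrivial congruence of a totally ordered Sugihara algebra is a $\neg$-symmetric collapse of a single central interval, with all other classes singletons, and that these are nested. That claim is the actual technical content of the cited reference, and as written your proposal is a plan for it rather than a proof: one must show that any collapsed pair $a<b$ forces the collapse of some central pair $c<\neg c$, which requires the case descriptions of $\cdot$ and $\to$ on chains --- and your item (3) supplies those descriptions only for \emph{finite} chains, whereas (1) and (2) concern arbitrary ones. A clean way to close this gap with tools already in the paper is local finiteness: every finitely generated subalgebra of a totally ordered Sugihara algebra is a finite chain, hence isomorphic to some $\Z_m$ by your (3), so every universal sentence valid in all $\Z_m$ (in particular the case-defined formulas for $\cdot$ and $\to$, and Lemma~\ref{lem:basic props}-style identities) holds in every totally ordered Sugihara algebra; the convexity-plus-negation ``spreading'' argument and the nestedness of central collapses then go through verbatim. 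With that supplement, your proof is complete and faithful to the route taken by the source the paper cites.
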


Note that Sugihara algebras are locally finite by \cite[Lemma~1.1(1)]{Blok1986}, so the finitely generated Sugihara algebras are precisely the finite ones. In particular, according to the preceding lemma, $\m{A}$ is a (nontrivial) finitely generated subdirectly irreducible Sugihara algebra if and only if $\m{A}$ is isomorphic to $\Z_n$ for some $n\geq 2$.

If $\m{A}$ is any Sugihara algebra, we will denote by $\bot\m{A}\top$ the nested sum $\Z_3\oplus\m{A}$ (see \cite{FMS2024,FG1,FG2}). That is, $\bot\m{A}\top$ is the algebra obtained from $\m{A}$ by adjoining $\bot$ as a new least element, $\top$ as a new greatest element, and defining $\neg a=\neg^\m{A} a$ for $a\in\mathsf{A}$, $\neg\top=\bot$, $\neg\bot=\top$, and
\[
a\cdot b = 
\begin{cases}
a\cdot^\m{A} b & \text{if }a,b\in A,\\
\bot & \text{if }a=\bot\text{ or }b=\bot,\\
\top & \text{otherwise},
\end{cases}
\]
\begin{align*}
a\to b = 
\begin{cases}
a\to^\m{A} b & \text{if }a,b\in\mathsf{A},\\
\top & \text{if }a=\bot\text{ or }b=\top,\\
\bot & \text{otherwise}.
\end{cases}
\end{align*}
We also iterate this construction, writing $\bot^{n+1}\m{A}\top^{n+1}$ to indicate the algebra $\bot\bot^n\m{A}\top^n\top$.

\begin{lemma}[{\cite[Corollary~2.5]{Blok1986}}]\label{lem: directly indecomposable}
A finite Sugihara algebra $\m{A}$ is directly indecomposable if and only if $\m{A}\in\iso(\Z_2)$ or $\m{A}\in\iso(\bot\m{B}\top)$, where $\m{B}$ is a finite Sugihara algebra.
\end{lemma}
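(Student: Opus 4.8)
The plan is to reduce direct factorizations of a finite Sugihara algebra to its complemented elements, and then to read off that indecomposability pins down exactly the two listed shapes. Throughout, I would fix an irredundant subdirect embedding $\m{A}\emd\prod_{i=1}^{k}\Z_{n_i}$ into finite totally ordered Sugihara algebras, which exists by Lemma~\ref{lem:totally ordered} and local finiteness; write $\pi_i$ for the projections and $\bot,\top$ for the bottom and top of the finite (hence bounded) lattice $\m{A}$, noting $\neg\top=\bot$ since $\neg$ is an order-reversing involution. Set $B(\m{A})=\{e\in A: e\join\neg e=\top\}$. Because $\neg(e\meet\neg e)=\neg e\join e=\top$, every $e\in B(\m{A})$ also satisfies $e\meet\neg e=\bot$, so $e$ is complemented with complement $\neg e$; conversely every complemented element lies in $B(\m{A})$, and working in each chain one sees that $e\in B(\m{A})$ iff $\pi_i(e)\in\{\pi_i(\bot),\pi_i(\top)\}$ for all $i$. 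The first key point is that $\m{A}$ is directly indecomposable iff $B(\m{A})=\{\bot,\top\}$: a nontrivial factorization $\m{A}\cong\m{C}\times\m{D}$ yields the element corresponding to $(\top_{\m{C}},\bot_{\m{D}})$, which lies in $B(\m{A})\setminus\{\bot,\top\}$; conversely, given such an $e$, put $S=\{i:\pi_i(e)=\pi_i(\top)\}$ and $T=\{i:\pi_i(e)=\pi_i(\bot)\}$, both nonempty, and observe that $(a\meet e)\join(b\meet\neg e)$ agrees with $a$ on $S$ and with $b$ on $T$, so $a\mapsto(\pi_S(a),\pi_T(a))$ is a bijective homomorphism onto the nontrivial product $\pi_S(\m{A})\times\pi_T(\m{A})$.

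For the backward direction of the lemma, both displayed algebras have $B=\{\bot,\top\}$: the algebra $\Z_2$ has only two elements, while in $\bot\m{B}\top$ the top is join-irreducible, since every element strictly below $\top$ lies in the join-closed set $\{\bot\}\cup B$, so $e\join\neg e=\top$ forces $e\in\{\bot,\top\}$. Hence $\Z_2$ and every $\bot\m{B}\top$ are directly indecomposable.

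For the forward direction, let $\m{A}$ be directly indecomposable and nontrivial, so $B(\m{A})=\{\bot,\top\}$. If $|A|=2$ then $\m{A}\cong\Z_2$. Assume $|A|\geq 3$; I claim $\m{A}\cong\bot\m{B}\top$, where $\m{B}$ is the restriction of $\m{A}$ to $C:=A\setminus\{\bot,\top\}$. Since $\neg\top=\bot$ and each chain-bottom $\pi_i(\bot)$ is absorbing (so $\bot$ is a multiplicative zero), matching the operation tables of $\m{A}$ with those the nested sum assigns to $\Z_3\oplus\m{B}$ comes down to the single structural fact $(\dagger)$: $\top$ is join-irreducible, and $\top\pd x=\top$ for every $x\neq\bot$. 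Granting $(\dagger)$, join-irreducibility of $\top$ and, dually (via $\neg$), meet-irreducibility of $\bot$ keep $\join$ and $\meet$ of elements of $C$ inside $C$; the sandwich $x\meet y\le x\pd y\le x\join y$, valid coordinatewise, keeps $\pd$ inside $C$; and the involutive identity $x\to y=\neg(x\pd\neg y)$ together with $\neg(C)=C$ keeps $\to$ inside $C$. Thus $C$ carries a subalgebra $\m{B}$, the second clause of $(\dagger)$ is precisely the clause $\top\pd x=\top$ defining the nested sum, and $\m{A}\cong\bot\m{B}\top$.

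The main obstacle is establishing $(\dagger)$ for a directly indecomposable $\m{A}$ — equivalently, showing that whenever $(\dagger)$ fails one can manufacture a nontrivial member of $B(\m{A})$, contradicting indecomposability. This is where the specifically Sugihara-theoretic content lives and where pure lattice theory does not suffice: the lattice of down-sets of the four-element poset having one minimum below two maxima is directly indecomposable with join-reducible top, yet admits no order-reversing involution and so is not the reduct of any Sugihara algebra. The interplay I expect to exploit is that the involution forces the numbers of atoms and coatoms to agree, while residuation and idempotency govern how joins and products of interior elements propagate toward $\bot$ and $\top$; from a second coatom, or from an interior element one of whose coordinates equals some $\pi_i(\bot)$, these constraints must be combined to produce an element of $B(\m{A})\setminus\{\bot,\top\}$. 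Carrying out this construction cleanly is the crux of the proof.
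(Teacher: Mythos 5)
The paper offers no proof of this lemma at all---it is imported verbatim from Blok and Dziobiak---so your attempt can only be judged on its own merits. Most of it holds up: the characterization of direct decomposability via $B(\m{A})=\{e\in A: e\join\neg e=\top\}$ is correct (the coordinatewise description of $B(\m{A})$ in a subdirect representation, and the inverse map $(a,b)\mapsto(a\meet e)\join(b\meet\neg e)$, both check out), the backward direction via join-irreducibility of the top of $\bot\m{B}\top$ is fine, and the reduction of the forward direction to your condition $(\dagger)$ is sound, granted the identities $x\meet y\leq x\pd y\leq x\join y$ and $x\to y=\neg(x\pd\neg y)$, which do hold in $\SA$. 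But the proposal then stops exactly where the lemma's actual content begins: $(\dagger)$ is never proved, and you say so yourself. As submitted, the forward direction is therefore not established, and this is a genuine gap rather than a deferred routine verification. Moreover, the closing speculation about matching numbers of atoms and coatoms points away from the argument that works.

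The gap is real but short to fill, using only machinery you already set up. Work in the subdirect representation $\m{A}\leq\prod_{i=1}^{k}\Z_{n_i}$ and write $\bot_i=\pi_i(\bot)$, $\top_i=\pi_i(\top)$; in each chain, $\bot_i$ is multiplicatively absorbing and $\top_i\pd z=\top_i$ for every $z\neq\bot_i$. Suppose the second clause of $(\dagger)$ fails, i.e.\ there is $x\neq\bot$ with $\top\pd x\neq\top$. Put $e=\top\pd x\in A$. Coordinatewise, $\pi_i(e)=\bot_i$ whenever $\pi_i(x)=\bot_i$ and $\pi_i(e)=\top_i$ otherwise; the first case occurs for at least one $i$ (else $\top\pd x=\top$) and the second for at least one $i$ (else $x=\bot$). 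So every coordinate of $e$ is extreme, whence $e\in B(\m{A})\setminus\{\bot,\top\}$ by your own characterization, contradicting indecomposability. Thus the second clause of $(\dagger)$ holds, and it says precisely that every $x\neq\bot$ has $\pi_i(x)\neq\bot_i$ for all $i$; applying $\neg$, every $x\neq\top$ has $\pi_i(x)\neq\top_i$ for all $i$, so a join of two non-top elements is coordinatewise strictly below each $\top_i$ and hence is not $\top$. That is join-irreducibility of $\top$, the first clause of $(\dagger)$, which you therefore need not assume separately. With these few lines inserted, your proof is complete and, for what it is worth, is self-contained where the paper simply cites the literature.
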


The following technical lemma is useful in performing computations within Sugihara algebras. The properties announced therein are familiar to most specialists, and we will use them without specific reference.

\begin{lemma}[see \cite{FGSugihara,CabPri2020}]\label{lem:basic props}
Let $\m{A}\in\SA$ and $x,y,z\in A$. Then:
\begin{enumerate}
\item $x(y\join z) = xy\join xz$.
\item $x(y\meet z) = xy\meet xz$.
\item $x (x\to y) \leq y$.
\item If $x\leq y$, then $xz\leq yz$, $z\to x\leq z\to y$, and $y\to z\leq x\to z$.
\item $x\to (y\join z) = (x\to y)\join (x\to z)$.
\item $(x\meet y)\to z = (x\to z)\join (y\to z)$.
\item $x\to (y\meet z) = (x\to y)\meet (x\to z)$.
\item $(x\join y)\to z = (x\to z)\meet (y\to z)$.
\item $x\cdot\neg x = x\meet\neg x$.
\item $\neg\neg x = x$.
\item $\neg (x\join y) = \neg x\meet\neg y$.
\item $\neg (x\meet y) = \neg x \join\neg y$.
\end{enumerate}
\end{lemma}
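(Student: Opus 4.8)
The plan is to reduce the entire list to the verification of finitely many (in)equalities in the single totally ordered generator $\Z$, and then to transfer the conclusions to all of $\SA$. First I would record that every clause is equivalent to an identity: the equalities (1), (2), (5)--(12) are already equations; the inequality (3) rewrites as $x(x\to y)\meet y\eq x(x\to y)$; and the order-preservation assertions in (4) rewrite as inequalities using the fact that a unary polynomial $f$ is monotone exactly when $f(u\meet v)\meet f(u)\eq f(u\meet v)$ holds identically, and dually that $-\to z$ is order-reversing exactly when $g(u)\meet g(u\meet v)\eq g(u)$ holds identically for $g=-\to z$. Since $\SA=\vr(\Z)$ and, by Birkhoff's theorem, a variety satisfies precisely those equations valid in each generating algebra, it then suffices to check each resulting identity in $\Z$ alone.

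In $\Z$ the lattice operations $\meet,\join$ are the usual minimum and maximum, $\neg x=-x$, and $\cdot$ and $\to$ are given by the displayed formulas; in particular $\cdot$ is the infimum for the nonstandard order $\cdots<-3<3<-2<2<-1<1<0$. Each identity then reduces to a short, entirely finite case analysis. For instance, (10)--(12) are immediate from $\neg x=-x$ together with the behaviour of $\min$ and $\max$ under sign change. Several clauses also admit a slick abstract proof that I would mention in passing: the residuation condition makes $x\cdot(-)$ a left adjoint of $x\to(-)$, so the former preserves all existing joins (giving (1)), the latter preserves all existing meets (giving (7)), the counit of the adjunction gives (3), monotonicity of adjoints gives (4), and antitonicity of residuation in its first argument gives (8).

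The genuinely Sugihara-specific content sits in (2), together with its images (5) and (6). In an arbitrary commutative residuated lattice multiplication distributes over joins but need not distribute over meets, so (2) really does use idempotency, distributivity, and semilinearity of $\SA$. Once (2) is available, (5) and (6) follow formally through the involution, using the identity $a\to b=\neg(a\cdot\neg b)$ together with the De Morgan laws (11), (12); for example $(x\meet y)\to z=\neg((x\meet y)\cdot\neg z)=\neg(x\neg z\meet y\neg z)=\neg(x\neg z)\join\neg(y\neg z)=(x\to z)\join(y\to z)$, and the analogous computation with $\neg(y\meet z)=\neg y\join\neg z$ handles (5). Clause (9) is likewise an equation and is checked directly in $\Z$ by comparing $\min_{\prec}(x,-x)$ with $\min(x,-x)$, where $\prec$ is the nonstandard order.

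I expect the only real obstacle to be (2), the distribution of multiplication over meet. In the semantic route this is the one point where the two orders on $\Z$ genuinely interact, so the case analysis must simultaneously track the usual order (which governs $\meet$ and $\join$ on the outside) and the nonstandard order (which governs $\cdot$); in the abstract route this is precisely the step where residuation alone is insufficient and one must invoke idempotency and distributivity instead. Everything else is either a one-line adjunction argument, a De Morgan dualization, or a routine finite evaluation in $\Z$.
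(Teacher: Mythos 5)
Your proposal is correct, but the comparison here is one-sided: the paper does not prove this lemma at all---it is stated with a pointer to \cite{FGSugihara,CabPri2020} and the properties are described as ``familiar to most specialists,'' to be used without specific reference. So your argument is a genuinely self-contained substitute rather than a variant of the paper's (nonexistent) proof, and its two pillars are both sound. First, the reduction to identities is legitimate: (3) rewrites as $x(x\to y)\meet y \eq x(x\to y)$, the monotonicity/antitonicity claims in (4) rewrite via $f(u\meet v)\meet f(u)\eq f(u\meet v)$ and $g(u)\meet g(u\meet v)\eq g(u)$, and since $\SA=\vr(\Z)$ (stated in the paper), Birkhoff's theorem transfers any identity valid in $\Z$ to all of $\SA$. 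Second, the adjunction shortcuts for (1), (3), (4), (7), (8) are valid directly in every Sugihara algebra, not just in $\Z$: residuation is a universal sentence in the constant-free language, so it survives passage to the $\ut$-free subreducts of Sugihara monoids. Two points deserve tightening. The identity $x\to y=\neg(x\cdot\neg y)$, on which your De Morgan derivation of (5) and (6) rests, is not an axiom here (there is no dualizing constant in the signature), so it must itself be verified; it is another identity, checked in $\Z$ by the same finite case analysis, so it fits your framework, but it should appear as an explicit step rather than an assumption. And clause (2) is much less of an obstacle than you anticipate: in the chain $\Z$ one may assume $y\leq z$, whence $x(y\meet z)=xy$, while monotonicity of multiplication (your (4), available from residuation alone) gives $xy\meet xz=xy$; no simultaneous bookkeeping of the standard and nonstandard orders is required, and idempotency plays no role. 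What carries (2) from $\Z$ to all of $\SA$ is exactly semilinearity, i.e., generation of the variety by a totally ordered algebra.
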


The next lemma is sometimes useful in deriving information about Sugihara algebras from Sugihara monoids.

\begin{lemma}\label{lem:generation fixed point}
Let $\m{A}$ be a totally ordered Sugihara algebra and let $\ut$ be an involution fixed point in $\m{A}$. Then $A-\{\ut\}$ is the universe of a subalgebra of $\m{A}$.
\end{lemma}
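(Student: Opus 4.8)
The plan is to verify that $A\setminus\{\ut\}$ is closed under each of the operations $\neg,\meet,\join,\cdot,\to$ of $\m{A}$; we may assume $\m{A}$ is nontrivial, so that this set is nonempty. Throughout I would use only that $\m{A}$ is a chain and that $\neg$ is an order-reversing involution (items (10) and (11) of Lemma~\ref{lem:basic props}), from which $\ut$ is the unique fixed point of $\neg$ and $\neg$ interchanges $\{x:x<\ut\}$ with $\{x:x>\ut\}$. Closure under $\neg$ is then immediate, since $\neg x=\ut$ would give $x=\neg\neg x=\neg\ut=\ut$; and closure under $\meet,\join$ is immediate as well, because in a chain $x\meet y$ and $x\join y$ are simply the smaller and the larger of $x,y$, so they lie in $\{x,y\}$ whenever $x,y\neq\ut$.

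The crux is closure under $\cdot$. I would first record, for every Sugihara algebra and all $x,y$, the two inequalities $x\meet y\le xy$ and $\ut\cdot x\le x$. The first follows from idempotence and the monotonicity of multiplication (item (4) of Lemma~\ref{lem:basic props}), since $x\meet y=(x\meet y)^2\le xy$. For the second, idempotence gives $x\le x\to x$ by residuation, while item (9) of Lemma~\ref{lem:basic props} gives $x\cdot\neg x=x\meet\neg x\le x$ and hence $\neg x\le x\to x$, again by residuation; thus $x\join\neg x\le x\to x$, and since the larger of $x,\neg x$ necessarily dominates the fixed point $\ut$ in the chain $\m{A}$, we obtain $\ut\le x\join\neg x\le x\to x$, whence $\ut\cdot x\le x$ by residuation once more. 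Now suppose toward a contradiction that $x,y\in A\setminus\{\ut\}$ while $xy=\ut$. Idempotence and associativity yield $x\cdot\ut=x(xy)=(xx)y=xy=\ut$, so $\ut=\ut\cdot x\le x$; as $x\neq\ut$ this forces $x>\ut$, and by the symmetric computation $y>\ut$. But then $x\meet y>\ut$, contradicting $x\meet y\le xy=\ut$. Hence $xy\neq\ut$.

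Closure under $\to$ then reduces to the previous two cases through the identity $x\to y=\neg(x\cdot\neg y)$. This is a standard consequence of the commutative involutive residuated lattice axioms: one has $\neg z=z\to\neg\ut$, and then $\neg(x\cdot\neg y)=(x\cdot\neg y)\to\neg\ut=x\to(\neg y\to\neg\ut)=x\to\neg\neg y=x\to y$ by residuation and involutivity. It therefore holds in every Sugihara monoid, and being $\ut$-free it holds in every Sugihara algebra as well. Consequently, if $x,y\in A\setminus\{\ut\}$ then $\neg y\neq\ut$ by closure under $\neg$, so $x\cdot\neg y\neq\ut$ by the preceding paragraph, and therefore $x\to y=\neg(x\cdot\neg y)\neq\ut$, again by closure under $\neg$. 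Having checked all five operations, $A\setminus\{\ut\}$ is the universe of a subalgebra of $\m{A}$.

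I expect the multiplicative step to be the only real obstacle. The subtlety is that one must rule out $xy=\ut$ for $x,y\neq\ut$ without simply asserting that $\ut$ is a multiplicative unit: although $\ut$ does turn out to be a unit in any chain possessing an involution fixed point, proving this outright seems no easier than the lemma itself. The two soft inequalities $\ut\cdot x\le x$ and $x\meet y\le xy$ are exactly what allow the idempotence identity $x(xy)=xy$ to pin both $x$ and $y$ strictly above $\ut$ and thereby force the contradiction, avoiding any appeal to the explicit multiplication table of $\m{A}$.
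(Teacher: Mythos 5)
Your proof is correct, but it takes a genuinely different route from the paper's. The paper's argument is a two-line appeal to the explicit description of the operations on totally ordered Sugihara algebras: for $x,y$ in such a chain, every basic operation applied to $x,y$ takes its value in $\{x,y,\neg x,\neg y\}$, a set that avoids the involution fixed point whenever $x$ and $y$ do. You instead work purely axiomatically, never invoking the operation tables: closure under $\neg,\meet,\join$ is soft; for $\cdot$ you rule out $xy=\ut$ via the two inequalities $x\meet y\le xy$ and $\ut\cdot x\le x$ combined with idempotence and associativity (the step $x\cdot\ut=x(xy)=xy=\ut$, forcing $x,y>\ut$, against $x\meet y\le xy=\ut$); and you reduce $\to$ to the previous cases via the identity $x\to y=\neg(x\cdot\neg y)$, correctly justified by deriving it in Sugihara monoids and passing to $\ut$-free subreducts. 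All of these steps check out, including the derivation of $\ut\cdot x\le x$ from $\ut\le x\join\neg x\le x\to x$. As for what each approach buys: the paper's proof is far shorter but leans on the structural description of operations in Sugihara chains imported from the cited literature, and as displayed it actually omits $x\cdot y$ from its list of cases (harmless, since $\cdot$ is term-definable as $\neg(x\to\neg y)$, but it is an omission your proof does not make); your proof is longer but self-contained, relying only on Lemma~\ref{lem:basic props} and the residuated-lattice axioms, and it isolates precisely the one non-trivial point---multiplication---that the table-based argument renders invisible.
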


\begin{proof}
It follows from the definitions of the basic operations that
\[
x\meet y,x\join y,x\to y,\neg x\in \{x,y,\neg x,\neg y\}
\]
for any $x,y\in A$. If neither $x$ nor $y$ is an involution fixed point, then it follows that none of $x\meet y$, $x\join y$, $x\to y$, or $\neg x$ is an involution fixed point.
\end{proof}

Note that each finite totally ordered Sugihara algebra may be embedded (as a Sugihara algebra) in a totally ordered odd Sugihara monoid. For $\Z_{2n+1}$, this may be done by designating the unique negation-fixed element as a constant. For $\Z_{2n}$, this follows by embedding in $\Z_{2n+1}$, considered as an odd Sugihara monoid.

\begin{lemma}[{\cite[Theorem~3.5]{Krawczyk2022}}]\label{lem:split}
Let $\Q$ be any quasivariety of Sugihara algebras properly containing $\vr (\Z_2)$. Then either $\Z_2\times\Z_3\in\Q$ or else $\Z_2\times\Z_4\in\Q$.
\end{lemma}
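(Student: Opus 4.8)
The plan is to reduce the statement to showing that $\Z_3\in\Q$ or $\Z_4\in\Q$, and then to invoke closure under products. Since $\vr(\Z_2)\subseteq\Q$ we have $\Z_2\in\Q$, and quasivarieties are closed under finite direct products; hence once we know $\Z_3\in\Q$ (resp.\ $\Z_4\in\Q$) we immediately obtain $\Z_2\times\Z_3\in\Q$ (resp.\ $\Z_2\times\Z_4\in\Q$). So everything comes down to producing a suitable totally ordered member of $\Q$ lying outside $\vr(\Z_2)$.

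First I would locate a totally ordered witness. Because $\Q\supsetneq\vr(\Z_2)$ and every algebra in a quasivariety is a subdirect product of its relatively subdirectly irreducible quotients, we have $\Q=\isp(\Q_\mathrm{RSI})$; were every member of $\Q_\mathrm{RSI}$ contained in the variety $\vr(\Z_2)$, closure of $\vr(\Z_2)$ under $\isp$ would force $\Q\subseteq\vr(\Z_2)$, contradicting properness. Hence some relatively subdirectly irreducible $\m{A}\in\Q$ lies outside $\vr(\Z_2)$. I would then argue that $\m{A}$ is totally ordered: if it were not, it would contain incomparable elements $a,b$, and the principal relative congruences $\cg(a\meet b,a)$ and $\cg(a\meet b,b)$ would both be nontrivial while meeting to the identity congruence (using distributivity of the underlying lattice), so that $\Delta_{\m{A}}$ would fail to be meet irreducible among relative congruences, contradicting relative subdirect irreducibility. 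Since $\m{A}$ is totally ordered and does not lie in $\vr(\Z_2)$, it must have at least three elements; replacing $\m{A}$ by the (finite, totally ordered) subalgebra generated by two elements witnessing $\m{A}\notin\vr(\Z_2)$ and applying Lemma~\ref{lem:totally ordered}(3) together with local finiteness, I may assume $\m{A}\cong\Z_n$ for some $n\geq3$, still with $\m{A}\in\Q$.

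Finally I would split on the parity of $n$. If $n$ is odd, then $\Z_n$ has an involution fixed point $0$, and the three elements $\{-1,0,1\}$ form a subalgebra isomorphic to $\Z_3$; if $n$ is even, then $\Z_n$ has no fixed point and the four elements $\{-2,-1,1,2\}$ form a subalgebra isomorphic to $\Z_4$, both closures being direct computations from the definitions of the operations on $\Z$. In either case closure of $\Q$ under subalgebras yields $\Z_3\in\Q$ or $\Z_4\in\Q$, and the product step above finishes the argument. I expect the main obstacle to be the reduction to the totally ordered case: the passage from ``not in $\vr(\Z_2)$'' to ``contains a totally ordered witness in $\Q$'' must be carried out at the level of relative congruences, since quasivarieties are not closed under homomorphic images and relatively subdirectly irreducible algebras need not be subdirectly irreducible in the absolute sense. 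Verifying that the two principal relative congruences above meet to $\Delta_{\m{A}}$ is the delicate point, and it is exactly here that the distributive-lattice structure of Sugihara algebras does the work.
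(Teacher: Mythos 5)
Your reduction cannot work, and it breaks at exactly the step you flagged as delicate. The counterexample is $\Q=\quas(\Z_2\times\Z_3)$. This quasivariety properly contains $\vr(\Z_2)$ (e.g.\ $\Z_2\times\Z_3$ fails $(x\meet\neg x)\join y\approx y$), yet it contains \emph{neither} $\Z_3$ \emph{nor} $\Z_4$: both are totally ordered, hence finitely subdirectly irreducible by Lemma~\ref{lem:totally ordered}, hence relatively finitely subdirectly irreducible in any quasivariety containing them, so by the Relativized J\'{o}nsson Lemma either one, if it belonged to $\Q$, would lie in $\ispu(\Z_2\times\Z_3)=\iso\sub(\Z_2\times\Z_3)$. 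But $\Z_2\times\Z_3$ has no involution fixed point, so no subalgebra isomorphic to $\Z_3$; and of its three four-element chains, two fail closure under $\neg$, while the $\neg$-closed one, $\{(-1,-1),(-1,0),(1,0),(1,1)\}$, fails closure under $\cdot$ because $(-1,0)\cdot(1,1)=(-1,1)$. So no subalgebra is isomorphic to $\Z_4$ either. The lemma is stated with the \emph{products} $\Z_2\times\Z_3$ and $\Z_2\times\Z_4$ precisely because quasivarieties are not closed under homomorphic images, so the chain factors cannot be recovered; recovering them is the whole content of Lemmas~\ref{lem:3reduction} and~\ref{lem:4reduction}, and it costs the amalgamation property.

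The same example pinpoints the false step: you claim that if $\m{A}\in\Q$ is relatively subdirectly irreducible and $a,b\in A$ are incomparable, then $\cg(a\meet b,a)\cap\cg(a\meet b,b)=\Delta_{\m{A}}$. That identity holds for \emph{absolute} principal congruences (which is why finitely subdirectly irreducible Sugihara algebras are chains), but relative congruence generation is a larger closure, and the intersection can jump above $\Delta_{\m{A}}$. Concretely, let $\m{A}=\Z_2\times\Z_3\in\Q=\quas(\Z_2\times\Z_3)$. Among the four congruences of $\m{A}$, the kernel of the projection onto $\Z_3$ is \emph{not} a relative congruence (its quotient $\Z_3$ is not in $\Q$), so the relative congruence lattice is the chain $\Delta_{\m{A}}\subsetneq\eta\subsetneq\nabla$, where $\eta$ is the kernel of the projection onto $\Z_2$. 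In particular $\m{A}$ \emph{is} relatively subdirectly irreducible in $\Q$ without being totally ordered, refuting your step 3; and taking $a=(1,-1)$, $b=(-1,1)$ gives $\cg(a\meet b,a)=\nabla$ and $\cg(a\meet b,b)=\eta$, whose intersection is $\eta\neq\Delta_{\m{A}}$. Note that the paper does not prove this lemma but quotes it from Krawczyk's Theorem~3.5; a correct argument runs instead along the lines of Lemmas~\ref{lem:splitting for v3} and~\ref{lem: for axiomatization of even chains}: by local finiteness pick a finite $\m{A}\in\Q\setminus\vr(\Z_2)$, decompose it into directly indecomposable factors, observe that some factor has at least three elements and so by Lemma~\ref{lem: directly indecomposable} has the form $\bot\m{B}\top$, which contains $\Z_3$ or $\Z_4$ as a subalgebra, and then assemble a copy of $\Z_2\times\Z_3$ or $\Z_2\times\Z_4$ inside $\m{A}$ itself (or inside $\Z_2\times\m{A}\in\Q$, if $\m{A}$ is directly indecomposable), using closure of $\Q$ under $\iso$, $\sub$, and finite products.
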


\begin{lemma}\label{lem:splitting for v3}
Let $\Q$ be a quasivariety of Sugihara algebras that properly contains $\vr(\Z_3)$. Then $\Z_2\times\Z_4\in\Q$.
\end{lemma}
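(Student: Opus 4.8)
The plan is to reduce the statement to producing a single member $\m{C}\in\Q$ with $\m{C}\cong\Z_4$ or $\m{C}\cong\Z_2\times\Z_4$. Indeed, $\Z_2\in\vr(\Z_2)\subseteq\vr(\Z_3)\subseteq\Q$, so $\Z_2\in\Q$; hence in the first case $\Z_2\times\Z_4\in\Q$ follows by closure under products, and in the second case there is nothing more to do. To locate such a $\m{C}$, I would start from the hypothesis: since $\Q$ properly contains the variety $\vr(\Z_3)$, some member of $\Q$ fails an equation valid in $\vr(\Z_3)$, and as Sugihara algebras are locally finite, the subalgebra generated by finitely many witnesses is a finite $\m{A}\in\Q$ with $\m{A}\notin\vr(\Z_3)$. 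Write $\m{A}$ as a subdirect product of its (finite, totally ordered) subdirectly irreducible quotients, which by Lemma~\ref{lem:totally ordered} are of the form $\Z_n$; recalling that $\Z_n\in\vr(\Z_3)$ exactly when $n\leq 3$, at least one of these quotients is some $\Z_N$ with $N\geq 4$. Fix the corresponding surjection $h\colon\m{A}\twoheadrightarrow\Z_N$, and choose a subdirect embedding $\m{A}\emd\prod_i\Z_{n_i}$ one of whose projections is $h$.

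Next I would choose the generators of $\m{C}$. Since $N\geq 4$, the chain $\Z_N$ has at least two distinct strictly positive elements, so there are positive $a,b\in\m{A}$ whose images under $h$ are distinct and strictly positive; setting $p=a\join b$ and $q=a\meet b$ (the positive elements of any Sugihara algebra form a sublattice) gives positive $q\leq p$ with $h(p),h(q)$ still distinct, nonzero, and positive. Put $\m{C}=\langle p,q\rangle\leq\m{A}$, so $\m{C}\in\Q$. The key computation is the coordinatewise analysis of $\m{C}$ along the chosen embedding. Because $p,q$ are positive with $q\leq p$, in each coordinate the pair $(p_i,q_i)$ generates a subalgebra $\langle p_i,q_i\rangle\leq\Z_{n_i}$ isomorphic to one of $\Z_1,\Z_2,\Z_3,\Z_4$, according to whether $(p_i,q_i)=(0,0)$, $p_i=q_i>0$, $p_i>q_i=0$, or $p_i>q_i>0$, with generators carried respectively to $(0,0)$, $(1,1)$, $(1,0)$, $(2,1)$; moreover the $h$-coordinate realizes the type $\Z_4$. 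For each of these four types there is a homomorphism from $\Z_4\times\Z_2$ (generated by $(2,1)$ and $(1,1)$) onto $\langle p_i,q_i\rangle$ sending $(2,1)\mapsto p_i$ and $(1,1)\mapsto q_i$: the two projections for $\Z_4$ and $\Z_2$, the quotient $\Z_4\times\Z_2\twoheadrightarrow\Z_3$ carrying $(2,1)\mapsto 1$, $(1,1)\mapsto 0$ for $\Z_3$, and the trivial map for $\Z_1$. Assembling these coordinatewise homomorphisms into a product map $\Z_4\times\Z_2\to\prod_i\Z_{n_i}$ sends $(2,1)\mapsto p$ and $(1,1)\mapsto q$, so its image is exactly $\langle p,q\rangle=\m{C}$; thus $\m{C}$ is a homomorphic image of $\Z_4\times\Z_2$ that additionally surjects onto $\Z_4$ via the $h$-coordinate.

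It then remains to identify $\m{C}$. The factors $\Z_4$ and $\Z_2$ share no common nontrivial quotient (the quotients of $\Z_4$ are among $\Z_4,\Z_3,\Z_1$ and those of $\Z_2$ among $\Z_2,\Z_1$), so they are coprime in the congruence-distributive variety $\SA$ and every congruence of $\Z_4\times\Z_2$ factors as a product of congruences of the factors; reading these off shows that the only homomorphic images of $\Z_4\times\Z_2$ that still admit $\Z_4$ as a quotient are $\Z_4\times\Z_2$ and $\Z_4$. Hence $\m{C}\cong\Z_4$ or $\m{C}\cong\Z_2\times\Z_4$, and in either case $\Z_2\times\Z_4\in\Q$ as explained above. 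I expect the main obstacle to be the coordinatewise analysis of the second paragraph—in particular the verification that each of the four coordinate types is a homomorphic image of $\Z_4\times\Z_2$ compatibly with the designated generators, which is precisely what confines $\m{C}$ to the short list $\{\Z_4,\Z_2\times\Z_4\}$ and, crucially, keeps the whole argument inside $\Q$ by realizing everything through subalgebras and products rather than through the quotient $\Z_N$ that merely witnesses $\m{A}\notin\vr(\Z_3)$.
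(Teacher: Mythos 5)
Your argument is correct, but it takes a genuinely different route from the paper's proof. Both proofs begin the same way---local finiteness yields a finite $\m{A}\in\Q$ with $\m{A}\notin\vr(\Z_3)$---but the paper then writes $\m{A}$ as a \emph{direct} product of directly indecomposable factors and applies Lemma~\ref{lem: directly indecomposable}: some factor lies outside $\vr(\Z_3)$ and therefore contains a copy of $\Z_4$, the complementary product contains a copy of $\Z_2$, and hence $\Z_2\times\Z_4$ embeds into $\m{A}$ itself, so only closure under $\iso$ and $\sub$ is invoked. You instead use a \emph{subdirect} representation into totally ordered subdirectly irreducible quotients (Lemma~\ref{lem:totally ordered}), pick positive generators $q\leq p$ with distinct, strictly positive images in some quotient $\Z_N$ with $N\geq 4$, and pin down $\m{C}=\langle p,q\rangle$ by exhibiting it as a homomorphic image of $\Z_4\times\Z_2$ that still surjects onto $\Z_4$; your reduction then also uses closure of $\Q$ under products (since $\Z_2\in\Q$, producing $\Z_4\in\Q$ suffices), which the paper never needs. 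What the paper's route buys is brevity and reuse of the Blok--Dziobiak structure theory it already quotes; what yours buys is independence from Lemma~\ref{lem: directly indecomposable}, at the price of more computation and one external ingredient.

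That external ingredient is the one point to repair: the factorization of congruences of $\Z_4\times\Z_2$ into products of congruences of the factors is not really a matter of the factors being ``coprime''; the correct justification is the Fraser--Horn property, which holds in \emph{every} congruence-distributive variety, with no hypothesis on common quotients (alternatively, for these eight-element algebras one can simply list the homomorphic images $\Z_4\times\Z_2$, $\Z_4$, $\Z_3\times\Z_2$, $\Z_3$, $\Z_2$, $\Z_1$ by hand and check that only the first two map onto $\Z_4$). Two smaller points deserve a line each: the existence of \emph{positive} $a,b\in A$ with prescribed $h$-images is not automatic from surjectivity of $h$---take arbitrary preimages $a',b'$ of two distinct strictly positive elements and replace them by $|a'|,|b'|$, using that $h(|x|)=|h(x)|$, that $|\cdot|$ fixes the positive elements of $\Z_N$, and that $|x|$ is always positive; and your parenthetical claims that $(2,1)$ and $(1,1)$ generate $\Z_4\times\Z_2$ (this is precisely what makes the image of your assembled map equal to $\m{C}$ rather than merely contain it) and that collapsing the two middle elements of $\Z_4$ is a homomorphism onto $\Z_3$ are both true but carry real weight, so they should be verified explicitly.
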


\begin{proof}
Since $\Q$ is a locally finite quasivariety, it is generated by its finite members. Consequently, since $\Q$ contains $\vr(\Z_3)$ properly, there exists some finite algebra $\m{A}\in\Q$ with $\m{A}\notin\vr(\Z_3)$. As $\m{A}$ is finite, it may be written as a direct product $\m{A}_1\times\cdots\times\m{A}_n$ of finitely many directly indecomposable Sugihara algebras $\m{A}_1,\ldots,\m{A}_n$. Because $\m{A}\notin\vr(\Z_3)$, there exists some $j\in\{1,\ldots,n\}$ such that $\m{A}_j\notin\vr(\Z_3)$. Noting that every nontrivial Sugihara algebra contains a subalgebra isomorphic to $\Z_2$ and every directly indecomposable Sugihara algebra not in $\vr(\Z_3)$ contains a subalgebra isomorphic to $\Z_4$, we have that
\[
\Z_2\in\iso\sub\left(\prod_{\substack{i\neq j \\ j=1,\ldots,n}}\m{A}_i\right)\text{ and }\Z_4\in\iso\sub(\Z_j).
\]
Because $\prd\sub(\K)\subseteq\sub\prd(\K)$ for any class $\K$ of similar algebras, it follows that $\Z_2\times\Z_4$ embeds in
\[
\left(\prod_{\substack{i\neq j \\ j=1,\ldots,n}}\m{A}_i\right)\times\m{A}_j\cong\m{A},
\]
and, as $\Q$ is closed under $\iso$ and $\sub$, the result follows.
\end{proof}

\begin{lemma}[{\cite[Lemma~6.3]{GilFerez2020}}]\label{lem:osm}
The class of totally ordered odd Sugihara monoids has the amalgamation property.
\end{lemma}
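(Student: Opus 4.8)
The plan is to reduce the amalgamation problem for totally ordered odd Sugihara monoids to the classical amalgamation property for the class of linear orders. The key observation is that such an algebra carries no more information than its underlying chain together with the distinguished involution fixed point, since all of the monoidal and residual structure is term-definable from $\leq$, $\neg$, and $\ut$.

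First I would record the relevant structural fact. In a totally ordered odd Sugihara monoid $\m{A}$, the constant $\ut$ is the unique fixed point of $\neg$, the absolute value is $|x| = x\join\neg x$, and the operations $\cdot$ and $\to$ are recovered from $\leq$ and $\neg$ by exactly the formulas used to define $\Z$, namely
\[
x\cdot y = \begin{cases} x & |x|>|y| \\ y & |x|<|y| \\ x\meet y & |x|=|y|, \end{cases}
\qquad
x\to y = \begin{cases} (\neg x)\join y & x\leq y \\ (\neg x)\meet y & x\not\leq y. \end{cases}
\]
Consequently $\m{A}$ is completely determined by the pointed chain $\langle A,\leq,\neg,\ut\rangle$, and indeed by its strictly positive cone $P_{\m{A}} = \{x\in A : x>\ut\}$: every chain $P$ gives rise to a totally ordered odd Sugihara monoid by mirroring $P$ below a fresh midpoint $\ut$ via a formal involution and defining $\cdot,\to$ by the displayed formulas, and this construction inverts passage to the positive cone. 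I would then verify the functorial half of the correspondence: an embedding of odd Sugihara monoids preserves $\leq$, $\neg$, and $\ut$, hence restricts to an order-embedding of positive cones; conversely, any order-embedding $P_{\m{A}}\to P_{\m{B}}$ extends uniquely to a map $A\to B$ fixing $\ut$ and commuting with $\neg$, and this map preserves $\meet,\join$ (order-embeddings of chains preserve binary min and max) as well as $\cdot$ and $\to$ (immediate from the formulas, since $|h(x)|=h(|x|)$ and the case splits are order-determined). Thus embeddings of totally ordered odd Sugihara monoids correspond bijectively to order-embeddings of their positive cones.

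With this dictionary in hand, the span $\langle\fb\colon\m{A}\emd\m{B},\fc\colon\m{A}\emd\m{C}\rangle$ becomes a span of chains $P_{\m{A}}\emd P_{\m{B}}$, $P_{\m{A}}\emd P_{\m{C}}$ under order-embeddings, and it suffices to amalgamate these in the class of linear orders. This is classical: identifying $P_{\m{A}}$ with a common subchain, one places each element of $P_{\m{B}}\setminus P_{\m{A}}$ and each element of $P_{\m{C}}\setminus P_{\m{A}}$ into the amalgam according to the cut it determines over $P_{\m{A}}$, breaking ties between elements realizing the same cut in any fixed fashion (say, all of $P_{\m{B}}$ before all of $P_{\m{C}}$). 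This yields a chain $P_{\m{D}}$ into which both $P_{\m{B}}$ and $P_{\m{C}}$ order-embed compatibly over $P_{\m{A}}$. Reconstructing $\m{D}$ from $P_{\m{D}}$ and translating the order-embeddings back through the correspondence produces embeddings $\gb\colon\m{B}\emd\m{D}$ and $\gc\colon\m{C}\emd\m{D}$ of odd Sugihara monoids with $\gb\fb=\gc\fc$, as required.

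The routine amalgamation of linear orders carries essentially no content; the real work is in the structural reduction. The main point to get right is the claim that $\cdot$ and $\to$ are term-definable from $\leq$, $\neg$, and $\ut$ and that the mirrored reconstruction always yields a genuine Sugihara monoid, as this is precisely what makes order-embeddings of positive cones automatically preserve the full similarity type. Edge cases---an empty positive cone (the trivial algebra) and elements of $P_{\m{B}}$ and $P_{\m{C}}$ filling the same gap over $P_{\m{A}}$---must be handled explicitly but present no genuine difficulty.
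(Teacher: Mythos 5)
The paper offers no proof of this lemma: it is imported wholesale from Gil-F\'{e}rez--Jipsen--Metcalfe \cite[Lemma~6.3]{GilFerez2020}, so there is no internal argument to compare yours against; judged on its own terms, your reduction is correct, and it runs through the same structure theory of odd Sugihara chains that underlies the cited source. The two facts you lean on are genuine theorems: in a totally ordered odd Sugihara monoid the operations are forced by $\langle A,\leq,\neg,\ut\rangle$ (products of positives are joins and products of negatives are meets, by idempotence and monotonicity, while the mixed cases follow from $x\cdot\neg x=x\meet\neg x$ together with residuation), and the mirror of an arbitrary chain $P$ is an odd Sugihara monoid --- for the latter, rather than a bare-hands verification of associativity and residuation, you can observe that the mirror of $P$ is the directed union of the mirrors of its finite subchains $F$, each isomorphic to the odd Sugihara chain on $2|F|+1$ elements, and varieties are closed under directed unions; also, ``term-definable'' is a slight misnomer (the operations are defined by order case-splits, not terms), but case-split definability is all your argument needs. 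Two points deserve emphasis in your favour. First, passing to positive cones is not cosmetic: amalgamating the full chains with the uniform tie-break ``all of $B$ before all of $C$ in every gap'' is inconsistent with the involution, since $b<c$ in a positive gap forces $\neg c<\neg b$ in the mirrored negative gap while the uniform rule would demand $\neg b<\neg c$; reflecting a positive-cone amalgam builds the required $\neg$-symmetry in automatically. Second, the uniqueness of the extension of a cone embedding to a monoid embedding (any such embedding fixes $\ut$ and commutes with $\neg$, hence is determined by its positive restriction) is precisely what gives $\gb\fb=\gc\fc$, and you invoke it correctly.
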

\begin{lemma}[{\cite[Theorem~2.1]{Czel1999}}]\label{lem:RCEP for Q SA}
Let $\Q$ be a quasivariety of Sugihara algebras. Then $\Q$ has the \prp{RCEP} if and only if $\Q$ is one of $\vr(\Z)$, $\quas(\E)$, $\vr(\Z_n)$, or $\quas(\Z_{2n})$ for some $n\geq 1$.
\end{lemma}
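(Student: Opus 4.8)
The plan is to reduce the \prp{RCEP} to finite algebras, to compute the congruence lattices of the chains $\Z_n$, to use these to enumerate the subquasivarieties of $\SA$, and then to treat the two implications separately. Using local finiteness (so finitely generated Sugihara algebras are finite), together with the finitary description of relative principal congruences, the \prp{RCEP} for $\Q$ can be tested on finite $\m{A}\leq\m{B}$ in $\Q$, so throughout one works with finite algebras. The computational core is the structure of $\mathrm{Con}(\Z_n)$: a nonsingleton block of a congruence of a finite totally ordered Sugihara algebra must be an interval that is convex for the lattice order, convex for the multiplicative order (the order by $|x|$), and closed under $\neg$, and these are exactly the \emph{centred symmetric intervals}. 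Hence $\mathrm{Con}(\Z_n)$ is a chain, and collapsing the central block sends $\Z_{2k}$ successively to $\Z_{2k-1},\Z_{2k-3},\dots,\Z_1$ and $\Z_{2k+1}$ to $\Z_{2k-1},\dots,\Z_1$; in particular $\Delta_{\Z_n}$ is completely meet irreducible in every relative congruence lattice, so each finite totally ordered member of a quasivariety is relatively subdirectly irreducible. Combining this with the Relativized J\'onsson Lemma and the embedding rule $\Z_m\emd\Z_n\iff m\leq n$ and ($m$ even or $n$ odd) (which rests on the fixed-point lemma, Lemma~\ref{lem:generation fixed point}, and Lemma~\ref{lem:totally ordered}), one shows that every subquasivariety equals $\quas(\{\Z_m:m\in S\})$ for the $\emd$-downward-closed set $S=\{m:\Z_m\in\Q\}$, and that exactly three cases occur: (i) $S$ consists of even numbers, giving $\quas(\Z_{2n})$ or $\quas(\E)$; (ii) $S$ is an initial segment, giving $\vr(\Z_n)$ or $\SA$; (iii) $S$ has a largest odd member $2a{+}1\geq 3$ together with some even member $\geq 2a{+}4$. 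The listed quasivarieties are precisely those in cases (i) and (ii).

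For the ``if'' direction, the varieties $\vr(\Z_n)$ and $\SA=\vr(\Z)$ inherit the \prp{CEP}, hence the \prp{RCEP}, from $\SA$, since the \prp{CEP} passes to subvarieties. For the pure-even quasivarieties $\quas(\Z_{2n})$ and $\quas(\E)$ I would argue directly, using that every member embeds into a product of even chains and that the even chains are relatively simple (their only proper quotients are odd chains, which are excluded). Given finite $\m{A}\leq\m{B}$ in $\Q$ and a relative congruence $\theta$ of $\m{A}$, let $\Psi$ be the least relative congruence of $\m{B}$ containing $\theta$. If some $(a,b)\in\Psi\cap A^2$ lay outside $\theta$, then since $\m{A}/\theta\in\Q$ there would be a homomorphism $g\colon\m{A}\twoheadrightarrow\m{C}$ onto an even chain with $\ker g\supseteq\theta$ and $g(a)\neq g(b)$. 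Fixing an embedding $\m{B}\emd\prod_j\m{D}_j$ into even chains and using that $\ker g$ is meet irreducible in the distributive lattice $\mathrm{Con}(\m{A})$, one obtains $\ker(\pi_j\restriction A)\subseteq\ker g$ for some $j$; as even chains have no nontrivial even-chain quotient, $g$ coincides up to isomorphism with $\pi_j\restriction A$ and therefore extends to $\pi_j\restriction B\colon\m{B}\to\m{D}_j$. The kernel of this extension is a relative congruence of $\m{B}$ containing $\theta$, so it contains $\Psi$, yet it separates $a$ and $b$ --- a contradiction. Thus $\Psi\cap A^2=\theta$, and the \prp{RCEP} holds.

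For the ``only if'' direction, suppose $\Q$ is not listed, so case (iii) applies: there is $k$ with $\Z_{2k},\Z_{2k-3}\in\Q$ but $\Z_{2k-1}\notin\Q$. Take $\m{B}=\Z_{2k}$, let $\m{A}\leq\m{B}$ be its central subalgebra isomorphic to $\Z_{2k-2}$, and let $\theta$ collapse the central pair of $\m{A}$; then $\m{A}/\theta\cong\Z_{2k-3}\in\Q$, so $\theta$ is a relative congruence. Since $\mathrm{Con}(\Z_{2k})$ is the chain of central collapses, the only congruence of $\m{B}$ whose restriction to $A^2$ equals $\theta$ is the central collapse of $\m{B}$, whose quotient is $\Z_{2k-1}\notin\Q$; hence no relative congruence of $\m{B}$ restricts to $\theta$, and the \prp{RCEP} fails.

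The delicate point is the pure-even case of the ``if'' direction. One is tempted to characterise the relative congruences of $\quas(\E)$ as those with fixed-point-free quotient, but this is false: $\Z_2\times\Z_3$ is fixed-point-free yet lies outside $\quas(\E)$, since every homomorphism from it into an even chain kills the $\Z_3$-factor. The argument above sidesteps this by controlling the relative congruence generated by $\theta$ entirely through homomorphisms into even chains, where the interplay of congruence distributivity with the relative simplicity of the even chains does the decisive work. The other point that requires care, and which I expect to be the more technical obstacle, is making fully rigorous the reduction of the \prp{RCEP} to finite algebras, since the meet-irreducibility step above genuinely uses the finiteness of $\m{A}$.
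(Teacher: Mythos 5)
A preliminary remark: the paper does not prove this lemma at all---it imports it verbatim from Czelakowski and Dziobiak \cite[Theorem~2.1]{Czel1999}---so your attempt must be judged on its own merits rather than against an in-paper argument. Your ``if'' direction is essentially sound: the varieties $\vr(\Z_n)$ and $\vr(\Z)$ inherit the \prp{CEP} from $\SA$, and your argument for $\quas(\Z_{2n})$ and $\quas(\E)$---factoring a separating homomorphism onto an even chain through a coordinate projection, using that its kernel is meet-prime in the distributive lattice $\Con{\m{A}}$ and that proper nontrivial quotients of even chains are odd chains---is correct, as is the reduction to finite algebras (relative congruence generation commutes with directed unions of subalgebras, so the standard compactness argument goes through in a locally finite quasivariety). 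Your case (iii) computation is also fine as far as it goes.

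The genuine gap is the structural claim on which the whole ``only if'' direction rests: it is \emph{false} that every subquasivariety of $\SA$ equals $\quas(\{\Z_m : m\in S\})$ for $S=\{m : \Z_m\in\Q\}$. What you correctly observe is that finite chains in $\Q$ are relatively subdirectly irreducible; what your classification actually needs is the converse---that the relatively subdirectly irreducible members of an arbitrary $\Q$ are chains---and this fails, because relative subdirect irreducibility is genuinely weaker than subdirect irreducibility. A concrete counterexample is $\Q_0=\quas(\Z_2\times\Z_3)$. Since $\Z_2\times\Z_3$ has no involution fixed point, $\Z_3\notin\Q_0$; and there is no homomorphism $\Z_4\to\Z_2$ at all (the quotients of $\Z_4$ are $\Z_4$, $\Z_3$, and the trivial algebra, none of which embeds in $\Z_2$), so composing with the first projection shows $\Z_4$ does not embed in $\Z_2\times\Z_3$, whence $\Z_4\notin\Q_0$ by the Relativized J\'onsson Lemma. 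Thus the only chains in $\Q_0$ are $\Z_1$ and $\Z_2$, so your scheme places $\Q_0$ in case (ii) and forces $\Q_0=\vr(\Z_2)$---but $\Q_0\neq\vr(\Z_2)$, since varieties are closed under homomorphic images and $\Z_3\in\hm(\Z_2\times\Z_3)$ while $\Z_3\notin\vr(\Z_2)$. Indeed $\Z_2\times\Z_3$ is itself relatively subdirectly irreducible in $\Q_0$ (its relative congruences form the three-element chain $\Delta\subsetneq\ker\pi_1\subsetneq\nabla$, because the quotient $\Z_3$ is excluded), which is exactly how $\Q_0$ escapes being generated by chains. Consequently $\Q_0$---and likewise $\quas(\Z_2\times\Z_5)$, $\quas(\bot(\Z_2\times\Z_3)\top)$, and a whole family of similar quasivarieties---is none of the five listed quasivarieties, falls under none of your cases (i)--(iii), and your proof never shows that it fails the \prp{RCEP}. (It does fail: take $\m{A}\cong\Z_2\times\Z_2$ inside $\m{B}=\Z_2\times\Z_3$ and let $\theta$ be the kernel of the projection of $\m{A}$ onto its second factor; the unique congruence of $\m{B}$ restricting to $\theta$ has quotient $\Z_3\notin\Q_0$.) Disposing of all such chain-free quasivarieties is precisely the hard content of Czelakowski--Dziobiak's theorem, and it is the part your proposal omits. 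Note that the paper itself is sensitive to this phenomenon: Lemma~\ref{lem: for axiomatization of even chains} and the role of the rule $\alpha,\neg\alpha\join\beta\vdash\beta$ in Theorem~\ref{thm:MIP classification} exist exactly because $\Z_2\times\Z_3$ generates a quasivariety containing no odd chain that nevertheless lies outside $\quas(\E)$.
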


\section{Amalgamation in Sugihara algebras}
\label{sec:positive}

In this section, we will give an exhaustive classification of arbitrary quasivarieties of Sugihara algebras with the \prp{AP}, as well as a concrete description of $\omq(\SA)$, the poset of quasivarieties of Sugihara algebras with the \prp{AP}. At the outset, we will obtain the relatively straightforward, affirmative result that the four quasivarieties $\vr(\Z_2)$, $\vr(\Z_3)$, $\vr(\Z)$, and $\quas(\E)$ have the \prp{AP}. The bulk of our work will be directed toward showing that, along with the trivial quasivariety, these are \emph{all} the subquasivarieties of Sugihara algebras with the \prp{AP}. In service toward the latter goal, we will, as previously mentioned, deploy a method based on closure properties as in, e.g., \cite{FSBasic} and \cite[Section~5]{FMS2024}.

We begin with our affirmative claim.

\begin{lemma}\label{lem:with the ap}
Each of the quasivarieties $\vr(\Z_2)$, $\vr(\Z_3)$, $\vr(\Z)$, and $\quas(\E)$ has the amalgamation property.
\end{lemma}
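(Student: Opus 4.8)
The plan is to reduce, in each case, the \prp{AP} to amalgamating spans of finite totally ordered Sugihara algebras via Lemma~\ref{lem:transfer}, and then to build the required amalgams from the amalgamation of totally ordered odd Sugihara monoids supplied by Lemma~\ref{lem:osm}.

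First I would verify that Lemma~\ref{lem:transfer} applies to each of the four quasivarieties $\Q$. Each of them appears on the list in Lemma~\ref{lem:RCEP for Q SA}, so each has the \prp{RCEP}. For the three varieties $\vr(\Z_2)$, $\vr(\Z_3)$, $\vr(\Z)$, relative congruences are ordinary congruences, so $\Qrfsi$ coincides with the class of finitely subdirectly irreducible members, which by Lemma~\ref{lem:totally ordered}(1) is the class of totally ordered members; this is plainly closed under subalgebras. For $\quas(\E)$, writing it as $\quas(\{\Z_{2n}\mid n\geq 1\})$ and using that $\vr(\E)=\SA$ is congruence distributive, the inclusion $\ispu(\{\Z_{2n}\})\subseteq\SA_{\mathrm{FSI}}$ holds because ultraproducts, subalgebras, and isomorphic copies of totally ordered algebras are again totally ordered; hence $\quas(\E)$ has \prp{EDPM} by Lemma~\ref{lem:eneveloping variety}, and then $\quas(\E)_{\mathrm{RFSI}}$ is a universal class by Lemma~\ref{lem:czel universal}, in particular closed under subalgebras. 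Thus in every case it suffices to amalgamate spans in $\Q_{\mathrm{FG}}\cap\Qrfsi$, which by local finiteness and Lemma~\ref{lem:totally ordered} consists of the finite totally ordered members of $\Q$, that is, certain $\Z_m$'s.

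For $\vr(\Z_2)$ and $\vr(\Z_3)$ this is immediate. In $\vr(\Z_2)$ the only finite totally ordered members are the trivial algebra and $\Z_2$, and since $\Z_2$ has no involution fixed point it admits no embedding from the trivial algebra; every span therefore has legs that are isomorphisms, and the given algebra is its own amalgam. In $\vr(\Z_3)$ the finite totally ordered members are the trivial algebra, $\Z_2$, and $\Z_3$, and up to isomorphism the only embedding that is not an isomorphism between nontrivial members is $\Z_2\emd\Z_3$; consequently every span embeds compatibly into $\Z_3$, which furnishes an amalgam in $\vr(\Z_3)$.

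The substantive cases are $\vr(\Z)=\SA$ and $\quas(\E)$, which I would treat uniformly. Each finite totally ordered Sugihara algebra $\m{X}$ extends to a finite totally ordered odd Sugihara monoid $\m{X}^+$: if $\m{X}$ has an involution fixed point, take it as the monoid unit; otherwise adjoin a new fixed point strictly between the negative and positive elements of $\m{X}$. Because a Sugihara-algebra embedding $h\colon\m{X}\emd\m{Y}$ preserves $|x|=x\to x$, it preserves positivity and order, hence carries the negative and positive parts of $\m{X}$ into those of $\m{Y}$; setting $h^+$ to agree with $h$ and to send unit to unit then yields a well-defined Sugihara-monoid embedding $h^+\colon\m{X}^+\emd\m{Y}^+$, injectivity in the case where $\m{Y}$ already has a fixed point following because the image of $h$ is fixed-point-free. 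Applying this to the legs $\fb,\fc$ of a span of finite totally ordered Sugihara algebras produces a span of finite totally ordered odd Sugihara monoids with legs $\fb^+,\fc^+$, which by Lemma~\ref{lem:osm} (and the fact that finite chains amalgamate finitely) admits a finite totally ordered odd Sugihara-monoid amalgam $\m{D}^+$ with embeddings $\gb^+,\gc^+$ satisfying $\gb^+\fb^+=\gc^+\fc^+$. Passing to $\ut$-free reducts then finishes: for $\SA$, the reduct $\m{D}$ of $\m{D}^+$ lies in $\SA$, and the restrictions of $\gb^+,\gc^+$ to $\m{B},\m{C}$ commute over $\m{A}$, amalgamating the span; for $\quas(\E)$ the span consists of fixed-point-free algebras, so I would delete the unit of $\m{D}$ via Lemma~\ref{lem:generation fixed point} to obtain the fixed-point-free subalgebra $\m{D}'$, and since $\gb^+,\gc^+$ send units to the unit and are injective, the images of $\m{B},\m{C}$ avoid the unit and land in $\m{D}'$, which is a finite totally ordered fixed-point-free Sugihara algebra, hence $\m{D}'\cong\Z_{2m}\in\quas(\E)$ by Lemma~\ref{lem:totally ordered}(3).

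I expect the main obstacle to be the uniform lifting of embeddings to odd Sugihara monoids, precisely the verification that $h^+$ is well defined and injective across all sign configurations, together with the bookkeeping that keeps the amalgam fixed-point-free in the $\quas(\E)$ case; both are managed by taking a \emph{finite} odd-monoid amalgam and deleting the unit, so that the surviving reduct is forced to be some $\Z_{2m}$. Should one prefer to invoke only the general (possibly infinite) form of Lemma~\ref{lem:osm}, membership $\m{D}'\in\quas(\E)$ would instead be secured by embedding the totally ordered fixed-point-free algebra $\m{D}'$ into a sufficiently saturated ultrapower of $\E$, placing it in $\ispu(\E)\subseteq\quas(\E)$.
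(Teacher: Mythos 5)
Your proposal is correct and follows the paper's route almost exactly: verify the hypotheses of Lemma~\ref{lem:transfer} (the \prp{RCEP} via Lemma~\ref{lem:RCEP for Q SA}, and closure under subalgebras of the relevant classes of relatively finitely subdirectly irreducible algebras, which are exactly the totally ordered members), settle $\vr(\Z_2)$ and $\vr(\Z_3)$ by inspection of the finitely many finite chains involved, and handle $\vr(\Z)$ and $\quas(\E)$ by lifting spans of finite chains to totally ordered odd Sugihara monoids and invoking Lemma~\ref{lem:osm}. The only real divergence is the finishing step of the $\quas(\E)$ case: the paper takes $\m{D}$ to be the Sugihara-algebra subalgebra of the unit-free reduct of the monoid amalgam generated by the images of $B$ and $C$---automatically finite by local finiteness, totally ordered, and fixed-point-free by Lemma~\ref{lem:generation fixed point}, hence isomorphic to some $\Z_{2n}$---whereas you delete the unit from the whole reduct, which forces you either to assert (without justification from Lemma~\ref{lem:osm} as stated) that finite spans admit finite monoid amalgams, or to fall back on your saturated-ultrapower argument. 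Your fallback does work (by local finiteness every finitely generated subalgebra of the fixed-point-free chain $\m{D}'$ is some $\Z_{2n}$, so $\m{D}'$ satisfies the universal theory of $\E$ and lies in $\ispu(\E)\subseteq\quas(\E)$; equivalently, $\m{D}'$ is a directed union of members of $\quas(\E)$), but the paper's device of generating a subalgebra from the images is simpler and sidesteps the finiteness question entirely.
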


\begin{proof}
Note that each of $\vr(\Z_2)$, $\vr(\Z_3)$, $\vr(\Z)$, and $\quas(\E)$ has the \prp{RCEP} by Lemma~\ref{lem:RCEP for Q SA}. Further, since each of $\vr(\Z_2)$, $\vr(\Z_3)$, and $\vr(\Z)=\vr(\E)$ has the \prp{CEP}, Lemma~\ref{lem:eneveloping variety} gives that each of $\vr(\Z_2)$, $\vr(\Z_3)$, $\vr(\Z)$, and $\quas(\E)$ has \prp{EDPM}. Consequently, by Lemma~\ref{lem:czel universal}, each of $\vr(\Z_2)_\mathrm{FSI}$, $\vr(\Z_3)_\mathrm{FSI}$, $\vr(\Z)_\mathrm{FSI}$, and $\quas(\E)_\mathrm{RFSI}$ forms a universal class and thus each of these is, in particular, closed under taking subalgebras. Therefore, by Lemma~\ref{lem:transfer}, it suffices to show that, in each case, spans of finitely generated relatively finitely subdirectly irreducibles may be amalgamated in the respective quasivariety.

By the Relativized J\'{o}nsson Lemma, the finitely generated relatively subdirectly irreducible members of the given quasivarieties are
\begin{align*}
\vr(\Z_2)_\mathrm{FG}\cap\vr(\Z_2)_\mathrm{RFSI} &= \iso(\{\Z_1,\Z_2\}),\\
\vr(\Z_3)_\mathrm{FG}\cap\vr(\Z_3)_\mathrm{RFSI} &= \iso(\{\Z_1,\Z_2,\Z_3\}),\\
\vr(\Z)_\mathrm{FG}\cap\vr(\Z)_\mathrm{RFSI} &= \iso(\{\Z_n \mid n\geq 1\}),\\
\quas(\E)_\mathrm{FG}\cap\quas(\E)_\mathrm{RFSI} &= \iso(\{\Z_{2n} \mid n\geq 1\}).
\end{align*}
It is routine to verify that every span in $\iso(\{\Z_1,\Z_2\})$ has an amalgam in $\{\Z_1,\Z_2\}$, and every span in $\iso(\{\Z_1,\Z_2,\Z_3\})$ has an amalgam in $\{\Z_1,\Z_3\}$. Hence, both $\vr(\Z_2)$ and $\vr(\Z_3)$ have the \prp{AP}.

Now suppose $\langle \alpha\colon\m{A}\to\m{B},\beta\colon\m{A}\to\m{C}\rangle$ is a span in $\quas(\E)_\mathrm{FG}\cap\quas(\E)_\mathrm{RFSI} = \iso(\{\Z_{2n} \mid n\geq 1\})$. Then the embeddings $\alpha$ and $\beta$ may be uniquely extended to embeddings $\hat{\alpha}\colon\hat{\m{A}}\to\hat{\m{B}}$ and $\hat{\beta}\colon\hat{\m{A}}\to\hat{\m{C}}$ of totally ordered odd Sugihara monoids. The resulting span $\langle\hat{\alpha},\hat{\beta}\rangle$ has an amalgam $\langle\hat{\alpha}'\colon\hat{\m{B}}\to\hat{\m{D}},\hat{\beta}'\colon\hat{\m{C}}\to\hat{\m{D}}\rangle$ among totally ordered odd Sugihara monoids by Lemma~\ref{lem:osm}. Let $\m{D}$ be the subalgebra of the $0$-free reduct $\hat{\m{D}}$ generated by $\hat{\alpha}'[B]\cup\hat{\beta}'[C]$, where subalgebra generation is performed as a Sugihara algebra. Then $\m{D}$ is a finite Sugihara algebra chain since $\hat{\alpha}'[B]\cup\hat{\beta}'[C]$ is a finite set and Sugihara algebras are locally finite. Further, $\m{D}$ does not contain an involution fixed point by Lemma~\ref{lem:generation fixed point}, so $\m{D}$ is isomorphic to $\Z_{2n}$ for some $n\geq 1$ and, in particular, $\m{D}\in\quas (\E)$. Letting $\alpha'$ be the restriction of $\hat{\alpha}'$ to $B$ and $\beta'$ be the restriction of $\hat{\beta}'$ to $C$, we obtain that $\langle\alpha'\colon\m{B}\to\m{D},\beta'\colon\m{C}\to\m{D}\rangle$ is an amalgam of $\langle\alpha,\beta\rangle$ in $\quas (\E)$. Thus, $\quas (\E)$ has the \prp{AP}.

The proof that $\vr (\Z)$ has the \prp{AP} is similar. 
\end{proof}
Thus, including the trivial quasivariety, we have identified five quasivarieties of Sugihara algebras with the \prp{AP}. To show that these are all of them, we will use the closure properties exhibited in the next four lemmas. 

\begin{lemma}\label{lem:3reduction}
Let $\Q\in\omq(\SA)$. If $\Z_2\times\Z_3\in\Q$, then $\Z_3\in\Q$.
\end{lemma}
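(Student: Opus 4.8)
The plan is to produce a single span in $\Q$ whose \emph{every} amalgam is forced to contain an involution fixed point; once some member of $\Q$ is known to have a fixed point, the conclusion $\Z_3\in\Q$ will follow quickly, since any nontrivial totally ordered Sugihara algebra with a fixed point has at least three elements and hence contains a copy of $\Z_3$. The guiding intuition is that $\Z_3\times\Z_3$ is an amalgam of $\Z_2\times\Z_3$ and $\Z_3\times\Z_2$ over their common subalgebra $\{-1,1\}^2\cong\Z_2\times\Z_2$, and that its fixed point $(0,0)$ is the meet $(1,0)\meet(0,1)$ of two fixed-point-free elements coming from the two sides. I would then try to show that this meet is forced to be a fixed point in an \emph{arbitrary} amalgam, not merely in $\Z_3\times\Z_3$.

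Concretely, set $\m{A}=\Z_2\times\Z_2$, $\m{B}=\Z_2\times\Z_3$, and $\m{C}=\Z_3\times\Z_2$, noting $\m{C}\cong\m{B}$ so that all three lie in $\Q$; since $\{-1,1\}$ is a subalgebra of $\Z_3$ by Lemma~\ref{lem:generation fixed point}, the set $\{-1,1\}^2$ embeds as a subalgebra into each of $\m{B}$ and $\m{C}$, giving a span in $\Q$. Applying the \prp{AP}, I would fix an amalgam $\langle\gb\colon\m{B}\emd\m{D},\gc\colon\m{C}\emd\m{D}\rangle$ with $\m{D}\in\Q$ and write $\beta=\gb(1,0)$, $\gamma=\gc(0,1)$, and $r$ for the common image of $(1,-1)\in\m{A}$, so that $\neg r$ is the image of $(-1,1)$ and $r\join\neg r=(1,1)=:\top$. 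The relations $\neg\beta=\beta\meet\neg r$ and $r\leq\beta\leq\top$ already hold in $\m{B}$, while $\neg\gamma=\gamma\meet r$ and $\neg r\leq\gamma\leq\top$ hold in $\m{C}$, so all of them hold in $\m{D}$. Using only these relations together with distributivity of the lattice reduct, I would compute
\[
\neg\beta\join\neg\gamma=(\beta\meet\neg r)\join(\gamma\meet r)=(\beta\join\gamma)\meet(\beta\join r)\meet(\neg r\join\gamma)\meet(\neg r\join r)=\beta\meet\gamma,
\]
where the last equality uses $\beta\join r=\beta$, $\neg r\join\gamma=\gamma$, $\neg r\join r=\top$, and $\beta\meet\gamma\leq\top$. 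Since $\neg(\beta\meet\gamma)=\neg\beta\join\neg\gamma$, this exhibits $u:=\beta\meet\gamma$ as an involution fixed point of $\m{D}$.

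Finally, I would extract $\Z_3$ from this fixed point. As $\beta<\top$ strictly in $\m{B}$, we have $u\neq\top$, so there is a relative congruence of $\m{D}$ maximal among those not identifying $u$ and $\top$; the associated quotient $\m{D}'$ is relatively subdirectly irreducible, lies in $\Q$, and is totally ordered by Lemma~\ref{lem:totally ordered}. The image of $u$ is an involution fixed point of $\m{D}'$ distinct from the image of $\top$, so $\m{D}'$ is a nontrivial totally ordered Sugihara algebra with a fixed point, whence $\Z_3\in\sub(\m{D}')\subseteq\Q$. The main obstacle is exactly the forcing computation in the second paragraph: because the \prp{AP} only supplies \emph{some} amalgam, the fixed point must be produced by an identity valid from the transported relations alone, rather than by inspecting the specific amalgam $\Z_3\times\Z_3$ that motivates the construction.
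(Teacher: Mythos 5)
Your second paragraph is correct, and it is the heart of the matter: your span is the paper's span in disguise (composing your inclusion of $\{-1,1\}^2$ into $\Z_3\times\Z_2$ with the swap isomorphism $\Z_3\times\Z_2\cong\Z_2\times\Z_3$ yields exactly the paper's twist map $(a,b)\mapsto(b,a)$ into $\Z_2\times\Z_3$), and the relations $\neg\beta=\beta\meet\neg r$, $\neg\gamma=\gamma\meet r$, $r\leq\beta\leq\top$, $\neg r\leq\gamma\leq\top$, $r\join\neg r=\top$ do transport to $\m{D}$ and do force $\neg(\beta\meet\gamma)=\neg\beta\join\neg\gamma=\beta\meet\gamma$ by the distributivity computation you give. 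Here your route genuinely differs from the paper's: the paper first assumes (via subdirect representation) that $\m{D}$ sits inside a power $\Z^\kappa$ and argues coordinatewise, by contradiction, that $g_1(1,0)\meet g_2(1,0)$ is the constant-zero vector, whereas your derivation of the fixed point is purely equational and needs no representation of $\m{D}$ at all. That is arguably cleaner.

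The genuine gap is in your final paragraph. You pass to the quotient $\m{D}'$ of $\m{D}$ by a \emph{relative} congruence maximal among those separating $u$ from $\top$, and assert that $\m{D}'$ is ``totally ordered by Lemma~\ref{lem:totally ordered}''. That lemma characterizes the \emph{absolutely} (finitely) subdirectly irreducible Sugihara algebras, i.e., irreducibility computed in the full congruence lattice; your $\m{D}'$ is only relatively subdirectly irreducible with respect to $\Q$, meaning the diagonal is completely meet irreducible in the generally much smaller lattice of $\Q$-congruences, and relatively subdirectly irreducible members of subquasivarieties of $\SA$ need \emph{not} be chains. Concretely, $\Z_2\times\Z_3$ is relatively subdirectly irreducible in $\quas(\Z_2\times\Z_3)$: it has no involution fixed point, hence no subalgebra isomorphic to $\Z_3$, so the Relativized J\'{o}nsson Lemma gives $\Z_3\notin\quas(\Z_2\times\Z_3)$; consequently its only proper nontrivial relative congruence is the kernel of the projection onto the $\Z_2$ factor, so its relative congruences form a chain and the diagonal is completely meet irreducible among them. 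Thus ``$\Q$-relatively subdirectly irreducible implies totally ordered'' is false as a general principle; for the particular $\Q$ at hand it becomes true only \emph{a posteriori}, via the very classification this lemma helps to establish, so invoking it here is circular. This is precisely the relative-versus-absolute pitfall the paper is structured to avoid, since most quasivarieties of Sugihara algebras lack the \prp{RCEP}.

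The gap is local and easily repaired, because your quotient is unnecessary: $\m{D}$ itself is nontrivial and contains the fixed point $u$, and every nontrivial Sugihara algebra containing an involution fixed point has a subalgebra isomorphic to $\Z_3$ --- this is exactly the fact with which the paper closes its own proof, and it gives $\Z_3\in\iso\sub(\m{D})\subseteq\Q$ at once. If you want to prove that fact rather than cite it, do the reduction to chains with \emph{subalgebras} rather than quotients, since subalgebras of $\m{D}$ remain in $\Q$ for free: represent $\m{D}$ subdirectly as a product of totally ordered algebras (Birkhoff together with Lemma~\ref{lem:totally ordered}, applied in the variety $\SA$, where no relative/absolute distinction arises); each coordinate of $u$ is a fixed point, hence the monoid identity, of its chain, so $u$ is a multiplicative identity of $\m{D}$; then for any $a\neq u$, taking $w$ to be whichever of $a\join u$ and $\neg(a\meet u)$ lies strictly above $u$, the set $\{\neg w,u,w\}$ is a subuniverse (closure under $\cdot$ uses $w\cdot\neg w=w\meet\neg w$ from Lemma~\ref{lem:basic props}(9), and closure under $\to$ follows since $x\to y=\neg(x\cdot\neg y)$ holds in $\Z$ and hence throughout $\SA$), and it is a three-element chain, isomorphic to $\Z_3$ by Lemma~\ref{lem:totally ordered}(3).
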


\begin{proof}
Consider the span in $\Q$ given by $\langle \iota\colon\Z_2\times\Z_2\hookrightarrow\Z_2\times\Z_3,f\colon\Z_2\times\Z_2\hookrightarrow\Z_2\times\Z_3\rangle$, where $\iota\colon (a,b)\mapsto (a,b)$ is the inclusion embedding and $f\colon (a,b)\mapsto (b,a)$. Since $\Q\in\omq(\SA)$, this span has an amalgam $\langle g_1\colon\Z_2\times\Z_3\hookrightarrow\m{D},g_2\colon\Z_2\times\Z_3\hookrightarrow\m{D}\rangle$, where $\m{D}\in\Q$. Because every Sugihara algebra is isomorphic to a subdirect product of copies of $\Z$, we may assume without loss of generality that $\m{D}$ is a subalgebra of $\Z^\kappa$ for some $\kappa$. We will prove that $\Z_3\in\iso\sub(\m{D})$, from which it will immediately follow that $\Z_3\in\Q$.

Because $|(1,0)|=(1,0)$ in $\Z_2\times\Z_3$ and being positive is preserved by homomorphisms, both $\ut\leq g_1(1,0)$ and $\ut\leq g_2(1,0)$ hold in $\Z^\kappa$, where $\ut\colon\kappa\to\Z$ is the vector in $\Z^\kappa$ that is constantly $0$. We claim that for each $i\in\kappa$ either $g_1(1,0)(i)=0$ or $g_2(1,0)(i)=0$. Toward a contradiction, suppose that $j\in\kappa$ is such that both $g_1(1,0)(j)>0$ and $g_2(1,0)(j)>0$. It follows that $-g_k(1,0)(j)<0$ in $\Z^\kappa$ for each $k\in\{1,2\}$ and, since $g_k$ is a homomorphism for $k\in\{1,2\}$,
\begin{align*}
g_k(1,0)(j)\meet g_k(-1,1)(j) &= g_k(-1,0)(j) \\
&= - g_k(1,0)(j) \\
&< 0.
\end{align*}
It follows that $g_k(-1,1)(j)<0$ since the elements in the $j$th coordinate are linearly ordered and $g_k(1,0)(j)>0$. On the other hand, because $\langle g_1,g_2\rangle$ is an amalgam of $\langle\iota,f\rangle$, we have that $g_1\circ\iota=g_2\circ f$ and so
\[
g_1(-1,1) = g_1(\iota(-1,1)) = g_2(f(-1,1)) = g_2(1,-1) = -g_2(-1,1),
\]
so $g_1(-1,1)(j)<0$ implies $g_2(-1,1)(j)>0$, a contradiction since we have shown that $g_2(-1,1)(j)<0$.

From the above, for every $i\in\kappa$ one of $g_1(1,0)(i)=0$ or $g_2(1,0)(i)=0$ must hold. Thus, for each $i\in\kappa$, $g_1(1,0)(i)\meet g_2(1,0)(i) = 0$, so $g_1(1,0)\meet g_2(1,0) = \ut$. It follows that $\ut\in D$. Since any non-trivial Sugihara algebra containing an involution-fixed element contains a subalgebra isomorphic to $\Z_3$, it follows that $\Z_3\in\iso\sub(\m{D})$ as desired.
\end{proof}

\begin{lemma}\label{lem:4reduction}
Let $\Q\in\omq(\SA)$. If $\Z_2\times\Z_4\in\Q$, then $\Z_4\in\Q$.
\end{lemma}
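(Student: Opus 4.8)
The plan is to follow the template of Lemma~\ref{lem:3reduction}, but with $\Z_4$ in place of $\Z_3$ and extracting a copy of $\Z_4$ in place of an involution fixed point. I would start from the span $\langle\iota\colon\Z_2\times\Z_2\hookrightarrow\Z_2\times\Z_4,\,f\colon\Z_2\times\Z_2\hookrightarrow\Z_2\times\Z_4\rangle$ in $\Q$, where $\iota(a,b)=(a,b)$ is the inclusion (legitimate since $\Z_2$ is a subalgebra of $\Z_4$) and $f(a,b)=(b,a)$ is the swap. Because $\Q\in\omq(\SA)$, this span has an amalgam $\langle g_1\colon\Z_2\times\Z_4\hookrightarrow\m{D},\,g_2\colon\Z_2\times\Z_4\hookrightarrow\m{D}\rangle$ with $\m{D}\in\Q$, and as before I may take $\m{D}$ to be a subalgebra of $\Z^\kappa$. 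The amalgamation identity $g_1\iota=g_2 f$ supplies the key relation $g_1(a,b)=g_2(b,a)$ for $a,b\in\{-1,1\}$. Set $p=g_1(1,1)=g_2(1,1)$ and $T=g_1(1,2)\join g_2(1,2)$. I claim that $\{\neg T,\neg p,p,T\}$ is the universe of a subalgebra of $\m{D}$ isomorphic to $\Z_4$; since $\Q$ is closed under $\iso\sub$, this gives $\Z_4\in\Q$.

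To prove the claim I would show that the assignment $\phi\colon\Z_4\to\Z^\kappa$ with $\phi(2)=T$, $\phi(1)=p$, $\phi(-1)=\neg p$, $\phi(-2)=\neg T$ is an embedding with image in $\m{D}$. Both $p$ and $T$ are positive (nonnegative in each coordinate) and $p\leq T$ (as $(1,1)\leq(1,2)$), so at each coordinate $i$ the four values are $-T(i)\leq-p(i)\leq p(i)\leq T(i)$. Since the only congruences of $\Z_4$ are the identity, the middle-collapsing congruence (with quotient $\Z_3$), and the total one, the coordinate map $\pi_i\circ\phi$ is a homomorphism into $\Z$ exactly when either $T(i)>p(i)>0$ (yielding an embedded $\Z_4$) or $p(i)=0$ (yielding a $\Z_3$- or trivial image); the case $p(i)>0=T(i)$ cannot occur because $T\geq p$. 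Hence $\phi$ is a homomorphism provided $T(i)>p(i)$ holds whenever $p(i)>0$, and it is injective because $p\neq\neg p$ (as $g_1$ is an embedding and $(1,1)$ is not fixed by $\neg$), so $p(i)>0$ for at least one $i$. Everything therefore reduces to the single inequality $T(i)>p(i)$ for every coordinate $i$ with $p(i)>0$.

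This inequality is the crux and the step I expect to be the main obstacle, for it is exactly where the naive choice $T=g_1(1,2)$ breaks down: at a coordinate where $\pi_i\circ g_1$ forgets the $\Z_4$-factor one has $g_1(1,2)(i)=g_1(1,1)(i)=p(i)$, and the ensuing coordinate-mixing in $\to$ is precisely the reason $\Z_4$ fails to embed into $\Z_2\times\Z_4$ outright---this is what the swap must repair. To establish it, fix $i$ with $p(i)>0$ and put $h_k=\pi_i\circ g_k\colon\Z_2\times\Z_4\to\Z$. Since $\Z$ is totally ordered, hence finitely subdirectly irreducible, and $\SA$ is congruence distributive, each $h_k$ factors through one of the two projections of $\Z_2\times\Z_4$. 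If $h_1$ factors through the projection onto $\Z_4$, then it is nontrivial ($h_1(1,1)=p(i)>0$) and, as no nontrivial congruence of $\Z_4$ identifies $1$ and $2$, it separates $(1,1)$ from $(1,2)$, so $T(i)\geq h_1(1,2)>p(i)$. If instead $h_1$ factors through the projection onto $\Z_2$, then $h_1(1,-1)=h_1(1,1)=p(i)$, whence the swap relation gives $h_2(-1,1)=h_1(1,-1)=p(i)$ while $h_2(1,1)=p(i)$; as $h_2$ then takes the same positive value on $(1,1)$ and $(-1,1)$, it cannot factor through the $\Z_2$-projection (which would force $h_2(-1,1)=-h_2(1,1)$), so it factors through $\Z_4$ and separates $1$ from $2$ as before, giving $T(i)\geq h_2(1,2)>p(i)$. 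Either way $T(i)>p(i)$, which completes the argument.
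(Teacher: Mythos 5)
Your proposal is correct, and in fact it builds exactly the same witnesses as the paper: your $T$, $p$, $\neg p$, $\neg T$ are the paper's $d=g_1(1,2)\join g_2(1,2)$, $c=g_1(1,1)$, $b=g_1(-1,-1)$, and $a=g_1(-1,-2)\meet g_2(-1,-2)$, arising from the same span and the same amalgam. Where you genuinely diverge is in verifying that these four elements carry a copy of $\Z_4$. The paper never represents $\m{D}$ inside $\Z^\kappa$ in this lemma: it works abstractly in $\m{D}$, notes closure of $\{a,b,c,d\}$ under $\meet,\join,\neg$ from $a<b<c<d$, $c=\neg b$, $d=\neg a$, and then checks closure under $\cdot$ by direct equational computation, the only nontrivial case being $bd=d$, which is obtained by distributing $\cdot$ over $\join$ and using the swap identity $g_1(x,y)=g_2(y,x)$ on $\Z_2\times\Z_2$---the same place your argument invokes the swap. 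You instead follow the coordinatewise template of Lemma~\ref{lem:3reduction}: represent $\m{D}\leq\Z^\kappa$, and use congruence distributivity of $\SA$ together with finite subdirect irreducibility of totally ordered algebras to factor each $h_k=\pi_i\circ g_k$ through one of the two projections of $\Z_2\times\Z_4$; the swap then rules out both $h_1$ and $h_2$ factoring through the $\Z_2$-projection at a coordinate where $p(i)>0$, which forces $T(i)>p(i)$ and makes each coordinate map a homomorphism. Both routes are sound. Yours costs two extra pieces of machinery that the paper's proof of this particular lemma avoids---the subdirect representation over $\Z$ (which, as in Lemma~\ref{lem:3reduction}, is cleanest to justify by first replacing $\m{D}$ with the finite subalgebra generated by $\im(g_1)\cup\im(g_2)$, using local finiteness) and the J\'{o}nsson-style factoring argument---but it buys a conceptual explanation of \emph{why} the swap is needed: it prevents both coordinate maps from forgetting the $\Z_4$-factor, which is exactly the failure mode that blocks embedding $\Z_4$ into $\Z_2\times\Z_4$ directly. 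Two small points to tighten: the factoring step requires the \emph{image} of $h_k$ (a subalgebra of $\Z$, hence a chain, hence FSI by Lemma~\ref{lem:totally ordered}) to be finitely subdirectly irreducible, not the codomain $\Z$ itself; and ``no nontrivial congruence of $\Z_4$ identifies $1$ and $2$'' should say ``no proper congruence,'' after first observing that $\ker\bar{h}_k$ is proper because $h_k$ takes both a positive and a negative value.
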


\begin{proof}
Consider the span $\langle\iota\colon\Z_2\times\Z_2\hookrightarrow\Z_2\times\Z_4,f\colon\Z_2\times\Z_2\hookrightarrow\Z_2\times\Z_4\rangle$, where $\iota$ is the inclusion embedding and $f\colon (a,b)\mapsto (b,a)$. Since $\Q\in\omq(\SA)$, the aforementioned span has an amalgam $\langle g_1\colon\Z_2\times\Z_4\hookrightarrow\m{D},g_2\colon\Z_2\times\Z_4\hookrightarrow\m{D}\rangle$, where $\m{D}\in Q$. We exhibit a subalgebra of $\m{D}$ isomorphic to $\Z_4$, from which it is immediate that $\Z_4\in\Q$.

In $\m{D}$, consider the elements
\begin{align*}
d &= g_1(1,2)\join g_2(1,2)\\
c &= g_1(1,1)\\
b &= g_1(-1,-1)\\
a &= g_1(-1,-2)\meet g_2(-1,-2).
\end{align*}
Since $g_1$ and $g_2$ are isotone embeddings, the elements $a,b,c,d$ are pairwise distinct and, indeed, $a<b<c<d$ in $\m{D}$. Further, because $g_1,g_2$ are homomorphisms, $c=\neg b$ and $d = \neg a$. Thus, $\{a,b,c,d\}$ is closed under $\meet$, $\join$, and $\neg$. We show that $\{a,b,c,d\}$ is closed under multiplication $\cdot$ as well.

It is immediate that $x^2=x$ for each $x\in\{a,b,c,d\}$. Moreover, using the fact that multiplication distributes over both of $\meet$ and $\join$ and that $g_1,g_2$ are homomorphisms, direct computation shows that $ac=a$, $bc=b$, and $cd=d$. Because $\cdot$ is commutative, the only nontrivial products to compute are $ad$ and $bd$. By Lemma~\ref{lem:basic props}(9), $ad=a\cdot\neg a = a\meet\neg a = a$. For the last remaining case, observe that
\begin{align*}
bd &= g_1(-1,-1)[g_1(1,2)\join g_2(1,2)]\\
&= g_1(-1,-1)g_1(1,2)\join g_1(-1,-1)g_2(1,2)\\
&= g_1(-1,-1)g_1(1,2)\join g_2(-1,-1)g_2(1,2)\\
&= g_1(-1,2)\join g_2(-1,2).
\end{align*}
Clearly, $g_1(-1,2)\join g_2(-1,2)\leq d$. On the other hand, since $g_1(x,y)=g_2(y,x)$ for any $x,y\in\{-1,1\}$, we have that
\begin{align*}
g_1(1,1)\join g_2(1,1) &= g_1(-1,1)\join g_1(1,-1)\join g_2(-1,1)\join g_2(1,-1)\\
&= g_1(-1,1)\join g_2(-1,1)\join g_2(-1,1)\join g_1(-1,1)\\
&\leq g_1(-1,2)\join g_2(-1,2),
\end{align*}
so
\[
g_1(-1,2)\join g_2(-1,2) = g_1(-1,2)\join g_2(-1,2) \join g_1(1,1)\join g_2(1,1) = d.
\]
It follows that $\{a,b,c,d\}$ is closed under multiplication and, in fact, $\{a,b,c,d\}$ forms a totally ordered subalgebra of $\m{D}$ that is isomorphic to $\Z_4$. This yields the result.
\end{proof}

\begin{lemma}\label{lem:4lifting}
Let $\Q\in\omq(\SA)$. If $\Z_4\in\Q$, then $\Z_{2n}\in\Q$ for every positive integer $n$. Consequently, if $\Z_4\in\Q$, then $\E\in\Q$.
\end{lemma}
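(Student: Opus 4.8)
The plan is to prove the stronger claim that $\Z_{2n}\in\Q$ for every $n\geq 1$ by induction on $n$, and then to deduce $\E\in\Q$ from the identity $\quas(\E)=\quas(\{\Z_{2n}\mid n\geq 1\})$ recorded in Section~\ref{sec:sugihara}: once every $\Z_{2n}$ belongs to the quasivariety $\Q$, we have $\quas(\{\Z_{2n}\mid n\geq 1\})\subseteq\Q$ and hence $\E\in\Q$. The base cases are immediate, since $\Z_2\in\iso\sub(\Z_4)\subseteq\Q$ and $\Z_4\in\Q$ by hypothesis, so the entire argument reduces to a single inductive step: assuming $\Z_{2n}\in\Q$ for some $n\geq 2$ (together with $\Z_4\in\Q$), produce $\Z_{2(n+1)}\in\Q$.

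For the inductive step I would use the \prp{AP} to graft a fresh innermost negation pair into $\Z_{2n}$. Concretely, consider the span over $\Z_2$ consisting of the ``inner'' embedding $f\colon\Z_2\emd\Z_{2n}$ carrying $\{-1,1\}$ to the central pair $\{-1,1\}$ of $\Z_{2n}$, together with the ``outer'' embedding $g\colon\Z_2\emd\Z_4$ carrying $\{-1,1\}$ to the extremal pair $\{-2,2\}$ of $\Z_4$. As $\Z_{2n},\Z_4\in\Q$ and $\Q\in\omq(\SA)$, this span has an amalgam $\langle g_1\colon\Z_{2n}\emd\m{D},\,g_2\colon\Z_4\emd\m{D}\rangle$ with $\m{D}\in\Q$, $g_1(-1)=g_2(-2)$, and $g_1(1)=g_2(2)$. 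The point is that $g_2$ pins the two new elements $g_2(-1),g_2(1)$ strictly between the shared elements $g_1(-1)$ and $g_1(1)$, a gap occupied by no element of $g_1[\Z_{2n}]$. One then verifies that
\[
T=\{g_1(-n),\ldots,g_1(-1),\,g_2(-1),\,g_2(1),\,g_1(1),\ldots,g_1(n)\}
\]
is the universe of a subalgebra of $\m{D}$ isomorphic to $\Z_{2(n+1)}$.

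The main obstacle is that the amalgam $\m{D}$ need not be totally ordered, so neither the claimed linear order on $T$ nor its closure under $\cdot$ can be read off for free; the plan is to establish both by hand. The comparabilities come from the shared endpoints and transitivity of the lattice order: since $g_2$ is an order embedding we have $g_1(-1)=g_2(-2)\leq g_2(-1)\leq g_2(1)\leq g_2(2)=g_1(1)$, which slots the new pair into the unique gap of the chain $g_1[\Z_{2n}]$ and renders $T$ totally ordered. Closure under $\neg$, $\meet$, and $\join$ is then routine, since $T$ is a chain and $\neg$ maps $T$ to itself. The only genuinely new computations are the ``cross'' products $g_1(\pm k)\cdot g_2(\pm 1)$ for $k\geq 2$, where the failure of $\m{D}$ to be a chain really bites; here I would sandwich each such product via monotonicity of multiplication (Lemma~\ref{lem:basic props}(4)) using $g_1(-1)\leq g_2(\pm 1)\leq g_1(1)$, obtaining
\[
g_1(\pm k)=g_1(\pm k)\cdot g_1(-1)\leq g_1(\pm k)\cdot g_2(\pm 1)\leq g_1(\pm k)\cdot g_1(1)=g_1(\pm k),
\]
where the outer equalities hold because $|\pm k|>1$ inside $\Z_{2n}$. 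This forces $g_1(\pm k)\cdot g_2(\pm 1)=g_1(\pm k)\in T$, so $T$ is closed under $\cdot$. As $T$ is then a totally ordered Sugihara algebra of cardinality $2(n+1)$, Lemma~\ref{lem:totally ordered}(3) identifies it with $\Z_{2(n+1)}$, yielding $\Z_{2(n+1)}\in\iso\sub(\m{D})\subseteq\Q$ and completing the induction.
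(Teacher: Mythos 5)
Your proposal is correct and follows the same strategy as the paper's proof: induct on $n$, at each step amalgamating $\Z_4$ with $\Z_{2n}$ over a copy of $\Z_2$, and show that the union of the two images is a $(2n+2)$-element chain, hence isomorphic to $\Z_{2n+2}$ by Lemma~\ref{lem:totally ordered}(3). The only genuine difference is that your span is the mirror image of the paper's. The paper embeds $\Z_2$ as the inner pair $\{-1,1\}$ of $\Z_4$ and as the extremal pair $\{-n,n\}$ of $\Z_{2n}$, so the two new elements (the images of $\pm 2\in\Z_4$) are appended as a new top and bottom of the chain; you instead embed $\Z_2$ as the extremal pair $\{-2,2\}$ of $\Z_4$ and as the central pair $\{-1,1\}$ of $\Z_{2n}$, so the two new elements are inserted into the central gap. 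Accordingly, the one nontrivial check---closure of the union under $\cdot$---is handled differently: the paper exploits the fact that the image of $-n$ is an absorbing element, so that the new extremes swallow all cross products, whereas you sandwich each cross product $g_1(\pm k)\cdot g_2(\pm 1)$ (for $|k|\geq 2$) between $g_1(\pm k)\cdot g_1(-1)$ and $g_1(\pm k)\cdot g_1(1)$, both equal to $g_1(\pm k)$, via monotonicity of multiplication (Lemma~\ref{lem:basic props}(4)). Both verifications are sound and of comparable length, so nothing of substance is gained or lost either way; your variant has the minor virtue of using only monotonicity rather than the special absorbing behavior of extremal elements. Finally, your explicit appeal to the identity $\quas(\E)=\quas(\{\Z_{2n}\mid n\geq 1\})$ to conclude $\E\in\Q$ spells out what the paper leaves implicit in its last sentence.
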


\begin{proof}
We argue by induction on $n$. If $n=2$, the claim is true by the assumption that $\Z_4\in\Q$, giving the base case. Suppose now that $\Z_{2n}\in Q$ for some $n\geq 2$. We show that $\Z_{2n+2}\in\Q$. For this, consider the span $\langle \iota\colon\Z_2\hookrightarrow\Z_4,f\colon\Z_2\hookrightarrow\Z_{2n}\rangle$, where $\iota$ is the identity embedding and $f$ is defined by $f(-1)=-n$ and $f(1)=n$. Since $\Q\in\omq(\SA)$, this span has an amalgam $\langle g_1\colon \Z_4\hookrightarrow\m{D},g_2\colon\Z_{2n}\hookrightarrow\m{D}\rangle$, where $\m{D}\in\Q$.

Set $S=\im(g_1)\cup\im(g_2)$. We claim that $S$ is the universe of a totally ordered subalgebra of $\m{D}$ with exactly $2n+2$ elements, and hence is isomorphic to $\Z_{2n+2}$ by Lemma~\ref{lem:totally ordered}. First, observe that, since $g_1$ and $g_2$ are order embeddings,
\begin{align*}
g_1(-2)<g_1(-1)&=g_2(-n)\\
&<g_2(-n+1)\\
&\;\;\;\vdots\\
&<g_2(n-1)\\
&<g_2(n)=g_1(1)<g_1(2),
\end{align*}
where we have used the equalities $g_1(-1)=g_1(\iota(-1))=g_2(f(-1))=g_2(-n)$ and $g_2(n)=g_2(f(1))=g_1(\iota(1))=g_1(1)$. Thus, the elements of $S$ form an $(2n+2)$-element chain in $\m{D}$. It follows that $S$ is closed under $\meet$ and $\join$, and we claim that it is also closed under $\neg$ and $\cdot$. Closure under $\neg$ follows immediately since $\im(g_2)$ is a subalgebra of $\m{D}$ isomorphic to $\Z_{2n}$ and $\neg g_1(-2) = g_1(2)$.

For closure under $\cdot$, it suffices to show that $g_1(-2)x,g_1(2)x\in S$ for any $x\in\im(g_2)$. Observe that, for any $k\in\Z_{2n}$,
\begin{align*}
g_1(-2) &= g_1(-2)g_1(-1) \\
&= g_1(-2)g_2(-n) \\
&= g_1(-2)g_2(-n)g_2(k) \\
&= g_1(-2)g_2(k),
\end{align*}
where we have used the fact that $g_2(-n)$ is an absorbing element in $\im(g_2)$. That $g_1(2)g_2(k)=g_1(2)$ for any $k\in\Z_{2n}$ follows similarly. Therefore, $S$ is closed under $\cdot$ and, hence, $\Z_{2n+2}\in\iso\sub(\m{D})\subseteq\Q$. It follows by induction that $\Z_{2n}\in\Q$ for each $n\geq 1$, so $\E\in\Q$ as well.
\end{proof}

\begin{lemma}\label{lem:pushup}
Let $\Q\in\omq(\SA)$. If $\Z_3,\Z_4\in\Q$, then $\Q=\SA$.
\end{lemma}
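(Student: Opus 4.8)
The plan is to show that every finite totally ordered Sugihara algebra lies in $\Q$, for then $\SA=\quas(\{\Z_n\mid n\geq 1\})\subseteq\Q\subseteq\SA$ forces $\Q=\SA$. The even chains come essentially for free: since $\Z_4\in\Q$, Lemma~\ref{lem:4lifting} already gives $\Z_{2n}\in\Q$ for every $n\geq 1$ (and $\E\in\Q$). It therefore remains only to produce the odd chains $\Z_{2n+1}$, and here the hypothesis $\Z_3\in\Q$, together with amalgamation, is the engine. Note $\Z_2\in\iso\sub(\Z_3)\subseteq\Q$, so all the algebras we use are available in $\Q$.

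The key construction is to amalgamate $\Z_3$ with $\Z_{2n}$ along a shared copy of $\Z_2$, positioned so that the involution fixed point of $\Z_3$ is driven into the central gap of $\Z_{2n}$. Concretely, I would take the span $\langle\iota\colon\Z_2\emd\Z_3,\,g\colon\Z_2\emd\Z_{2n}\rangle$, where $\iota$ identifies $\Z_2$ with the non-fixed elements $\{-1,1\}$ of $\Z_3$ (a subalgebra by Lemma~\ref{lem:generation fixed point}) and $g$ identifies $\Z_2$ with the innermost elements $\{-1,1\}$ of $\Z_{2n}$. Since $\Q\in\omq(\SA)$, this span has an amalgam $\langle h_1\colon\Z_3\emd\m{D},\,h_2\colon\Z_{2n}\emd\m{D}\rangle$ with $\m{D}\in\Q$ and $h_1\iota=h_2 g$, so in particular $h_1(\pm 1)=h_2(\pm 1)$. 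As $h_1,h_2$ are order embeddings sharing exactly these two endpoints, $\pm 1$ are adjacent in $\Z_{2n}$, and the fixed point $u=h_1(0)$ lies strictly between $h_1(-1)$ and $h_1(1)$, transitivity makes $S=\im(h_2)\cup\{u\}$ a $(2n+1)$-element chain in $\m{D}$.

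The crux is to verify that $S$ is closed under the Sugihara operations, and the one genuinely nontrivial point—where I expect the (short) main obstacle—is closure under multiplication, namely that the fixed point $u$ acts as a unit on $\im(h_2)$, i.e.\ $u\cdot h_2(k)=h_2(k)$. I would settle this by an associativity computation, using that $h_2(1)$ is a multiplicative unit within $\im(h_2)$ (since $1$ is a unit of $\Z_{2n}$), that $h_2(1)=h_1(1)$, and that $0\cdot 1=1$ in $\Z_3$:
\[
u\cdot h_2(k)=u\cdot\bigl(h_2(1)\,h_2(k)\bigr)=\bigl(u\cdot h_1(1)\bigr)h_2(k)=h_1(0\cdot 1)\,h_2(k)=h_2(1)\,h_2(k)=h_2(k).
\]
Closure under $\neg$ is immediate since $\neg u=u$ and $\im(h_2)$ is closed under $\neg$; closure under $\meet,\join$ holds because $S$ is a chain; and closure under $\to$ follows since $\to$ is term-definable from $\cdot$ and $\neg$ (cf.\ Lemma~\ref{lem:basic props}). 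Hence $S$ is a totally ordered subalgebra of $\m{D}$ with $2n+1$ elements, so $S\cong\Z_{2n+1}$ by Lemma~\ref{lem:totally ordered}(3), and therefore $\Z_{2n+1}\in\iso\sub(\m{D})\subseteq\Q$. Together with the even chains and the trivial algebra, this yields $\{\Z_n\mid n\geq 1\}\subseteq\Q$, whence $\Q=\SA$.
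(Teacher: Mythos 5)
Your proof is correct, but it takes a genuinely different route from the paper's. The paper never produces the even chains at all: it uses the generation fact $\SA=\quas(\{\Z_{2n+1}\mid n\geq 0\})$ and argues by induction on odd chains, at each step amalgamating $\Z_4$ with $\Z_{2n+1}$ over a copy of $\Z_2$ placed so that the outer elements $\pm 2$ of $\Z_4$ extend the chain outward, yielding a $(2n+3)$-element chain $\cong\Z_{2n+3}$; closure under $\cdot$ there is settled by the absorbing-element computation already used in Lemma~\ref{lem:4lifting}. You instead take Lemma~\ref{lem:4lifting} as a black box to get all even chains, and then handle each odd chain in a single, non-inductive amalgamation of $\Z_3$ with $\Z_{2n}$ over $\Z_2$ placed in the central gap, so that the fixed point of $\Z_3$ is inserted in the middle of $\Z_{2n}$; the crux is then your associativity computation showing the fixed point acts as a unit on $\im(h_2)$, which is the correct replacement for the paper's absorbing-element trick (note the two constructions need different multiplication arguments precisely because you glue at the center rather than at the extremes). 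Both arguments are sound; the paper's is self-contained within odd chains, while yours trades the induction for a uniform one-step construction at the cost of invoking Lemma~\ref{lem:4lifting} first. One small point of rigor: closure of $S$ under $\to$ is not literally an item of Lemma~\ref{lem:basic props}. The identity $x\to y=\neg(x\cdot\neg y)$ you implicitly rely on does hold in every Sugihara algebra (it holds in $\Z$, which generates $\SA$ as a variety), but you should either state and justify it, or verify $\to$-closure directly, e.g.\ $u\to h_2(k)=\neg(u\cdot h_2(-k))=h_2(k)$ and $h_2(k)\to u=\neg(h_2(k)\cdot u)=h_2(-k)$, both of which lie in $S$.
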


\begin{proof}
We argue by induction to show that $\Z_{2n+1}\in\Q$ for each $n\geq 1$. The base case is true by assumption, so assume that $n\geq 1$ is an integer such that $\Z_{2n+1}\in\Q$. We consider the span $\langle i_1\colon\Z_2\hookrightarrow\Z_4,i_2\colon\Z_2\hookrightarrow\Z_{2n+1}\rangle$, where $i_1$ and $i_2$ are the identity embeddings. By hypothesis, this span has an amalgam $\langle g_1\colon\Z_4\hookrightarrow\m{D},g_2\colon\Z_{2n+1}\hookrightarrow\m{D}\rangle$ for some $\m{D}\in\Q$. Arguing as in the proof of Lemma~\ref{lem:4lifting}, we set $S=\im(g_1)\cup\im(g_2)$ and observe that $S$ is the universe of a subalgebra $\m{S}$ of $\m{D}$ and that
\[
g_1(-2)<g_2(-n)<\cdots<g_2(0)<\cdots<g_2(n)<g_1(2),
\]
so $\m{S}$ is a totally ordered subalgebra of $\m{D}$ with exactly $2n+3$ elements. It follows that $\m{S}\cong\Z_{2n+3}$ by Lemma~\ref{lem:totally ordered}, so $\Z_{2n+3}\in\Q$. Thus, $\Z_{2n+1}\in\Q$ for each $n\geq 1$ and therefore $\Q=\SA$ as desired.
\end{proof}

To complete our main argument, we require one further lemma regarding the structure of $\Lambda(\SA)$. We will use it in tandem with Lemma~\ref{lem:split} to show that the closure properties exhibited in Lemmas~\ref{lem:3reduction}, \ref{lem:4reduction}, \ref{lem:4lifting}, and \ref{lem:pushup} suffice to exclude all nontrivial quasivarieties not already listed in Lemma~\ref{lem:with the ap}.

\begin{lemma}\label{lem: for axiomatization of even chains}
Let $\m{A}$ be a finite Sugihara algebra such that $\m{A}\notin\quas(\E)$. Then either $\Z_3\in\iso\sub( \m{A})$ or $\Z_2\times\Z_3\in\iso\sub (\m{A})$.
\end{lemma}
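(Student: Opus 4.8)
The plan is to argue by induction on $|A|$, reducing the statement to two structural ingredients: a dichotomy according to whether $\m{A}$ carries an involution fixed point, and a lifting principle that transports copies of $\Z_3$ and $\Z_2\times\Z_3$ from the directly indecomposable factors of $\m{A}$ up to $\m{A}$ itself. First I would dispose of the easy case. Since the trivial algebra lies in $\quas(\E)$, the hypothesis forces $\m{A}$ to be nontrivial; and if $\m{A}$ has a fixed point, then $\Z_3\in\iso\sub(\m{A})$ because any nontrivial Sugihara algebra with a fixed point contains $\Z_3$. So I may assume $\m{A}$ has no fixed point, and the goal becomes to produce an embedding of $\Z_2\times\Z_3$ (note that $\Z_3$ is now impossible, as it would reintroduce a fixed point). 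Writing $\m{A}\cong\m{A}_1\times\cdots\times\m{A}_k$ as a product of nontrivial directly indecomposable factors (Lemma~\ref{lem: directly indecomposable}), closure of $\quas(\E)$ under products yields a factor $\m{A}_j\notin\quas(\E)$; since $\Z_2\in\quas(\E)$, this factor is not $\Z_2$, so $\m{A}_j\cong\bot\m{B}\top$ for some finite $\m{B}$.

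The crux is a lifting lemma $(\star)$: if one factor admits an embedding of $\Z_2\times\Z_3$, or if one factor contains $\Z_3$ while a different factor contains $\Z_2$, then $\Z_2\times\Z_3\emd\m{A}$. The subtlety is that a direct factor is generally \emph{not} a subalgebra of the product---the naive coordinate inclusion fails to preserve $\neg$ and $\to$ unless the omitted coordinates are fixed points---so instead I build an embedding $\Phi\colon\Z_2\times\Z_3\to\prod_i\m{A}_i$ componentwise: in the distinguished coordinate(s) I use the given embedding (either of $\Z_2\times\Z_3$, or of $\Z_3$ and of $\Z_2$ precomposed with the projections $\Z_2\times\Z_3\to\Z_3$ and $\Z_2\times\Z_3\to\Z_2$), and in every remaining coordinate I use the harmless homomorphism $\Z_2\times\Z_3\to\Z_2\emd\m{A}_i$ (available since each $\m{A}_i$ is nontrivial). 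A product of homomorphisms is a homomorphism, and $\Phi$ is injective because the distinguished coordinates already separate all pairs of elements of $\Z_2\times\Z_3$.

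With $(\star)$ available, I split on $\m{B}$. If $\m{B}$ has a fixed point, then so does $\m{A}_j$, giving $\Z_3\emd\m{A}_j$; since $\m{A}$ has no fixed point there must be a further factor $\m{A}_l$ with $l\neq j$ having no fixed point, whence $\Z_2\emd\m{A}_l$, and $(\star)$ concludes this case. If instead $\m{B}$ has no fixed point, then I wish to invoke the inductive hypothesis on the strictly smaller $\m{B}$: once I know $\m{B}\notin\quas(\E)$, fixed-point-freeness forces the inductive conclusion to be $\Z_2\times\Z_3\emd\m{B}\emd\m{A}_j$, and $(\star)$ finishes again.

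The delicate point---where I expect the main obstacle to lie---is verifying $\m{B}\notin\quas(\E)$ in this last case. I would argue contrapositively: for a nontrivial fixed-point-free $\m{B}\in\quas(\E)$ I claim $\bot\m{B}\top\in\quas(\E)$, contradicting $\m{A}_j=\bot\m{B}\top\notin\quas(\E)$. Since $\m{B}\in\quas(\E)$, it admits a separating family of surjections onto even chains $\Z_{2n}$ (the relatively finitely subdirectly irreducible members of $\quas(\E)$ being the even chains); using the functoriality of the $\bot(-)\top$ construction I extend each such surjection to a homomorphism $\bot\m{B}\top\to\bot\Z_{2n}\top=\Z_{2n+2}$, and this extended family still separates the points of $\bot\m{B}\top$, since it separates the middle copy of $\m{B}$ and sends $\bot,\top$ to the extreme elements. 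Hence $\bot\m{B}\top$ embeds subdirectly into a product of even chains and so lies in $\quas(\E)$. I anticipate that the fussiest parts are the routine check that $\bot(-)\top$ preserves homomorphisms (so that the extensions are legitimate) and the bookkeeping that the extended family separates all points---taking care that the degenerate possibility $\bot\m{B}\top\cong\Z_3$ (for trivial $\m{B}$) is excluded precisely because $\m{B}$ is fixed-point-free, hence nontrivial.
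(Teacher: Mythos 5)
Your proof is correct, and at its core it runs on the same engine as the paper's own argument: induction on $|A|$, the decomposition of Lemma~\ref{lem: directly indecomposable} into copies of $\Z_2$ and nested sums $\bot\m{B}\top$, and an application of the inductive hypothesis to the inner algebra $\m{B}$. The differences lie in the case organization and in which steps are made explicit, and they are substantive. The paper splits on direct indecomposability, picks any nested-sum factor, and applies the inductive hypothesis to it (or to $\m{B}$) without verifying that the algebra in question lies outside $\quas(\E)$; it also compresses the lifting of $\Z_2\times\Z_3$ from a single factor up to the full product into the remark that products of subalgebras are subalgebras of products. You instead split on the existence of an involution fixed point, select a factor $\m{A}_j\notin\quas(\E)$ (the right choice: in $\Z_4\times\Z_3$, for instance, the nested-sum factor $\Z_4$ lies in $\quas(\E)$, and the inductive hypothesis says nothing about it), and you prove the two facts the paper leaves implicit. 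Your ``delicate point''---that $\bot\m{B}\top\in\quas(\E)$ whenever $\m{B}\in\quas(\E)$ is nontrivial, obtained by pushing a subdirect representation of $\m{B}$ over the even chains through the functorial $\bot(-)\top$ construction, with triviality excluded precisely because $\bot\Z_1\top\cong\Z_3$---is exactly the justification needed before invoking the inductive hypothesis on $\m{B}$, and your lemma $(\star)$, with the diagonal trick handling the case where $\Z_2\times\Z_3$ embeds into one factor only, is exactly what makes the lifting step rigorous (a factor is not in general a retract that subalgebras can be pulled back along, as you note). What the paper's route buys is brevity; what yours buys is a self-contained argument, plus a small dividend from the fixed-point dichotomy: in the fixed-point-free branch the inductive disjunction automatically sharpens to $\Z_2\times\Z_3\in\iso\sub(\m{B})$, so no further case analysis is needed there.
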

\begin{proof}
Let $\m{A}\notin\quas(\E)$ be finite.
We proceed inductively on the size of $\m{A}$. It is obvious that any algebra which is not a member of $\quas(\E)$ needs to have at least three elements. Thus, $|\m{A}|=3$ in the base case. This means $\m{A}\in\iso(\Z_3)$, which gives us the result immediately. 

For the inductive step, suppose $|\m{A}|>3$. We have two cases to consider: Either (i) $\m{A}$ is directly indecomposable, or (ii) $\m{A}$ is not directly indecomposable. 
Assume (i). Then, by Lemma \ref{lem: directly indecomposable}, we have $\m{A}\in\iso(\bot\m{B}\top)$, where $|\m{B}|\geq 2$. If $|\m{B}|= 2$, then $\m{A}\in\iso(\Z_4)\subseteq\quas(\E)$, which contradicts the initial assumption that $\m{A}\notin\quas(\E)$. Hence, $|\m{B}|\geq 3$ and so we can apply the inductive hypothesis to $\m{B}$. Thus, either $\Z_3\in\iso\sub( \m{B})$, or $\Z_2\times\Z_3\in\iso\sub (\m{B})$, which immediately gives us the result since $\m{B}\in\iso\sub(\m{A})$.

For the second case, suppose $\m{A}\in\iso(\m{A}_1\times\ldots\times\m{A}_k)$, where $\m{A}_i$ is directly indecomposable for all $i=1,\ldots,k$. If $\m{A}_i\in\iso(\Z_2)$ for all $i=1,\ldots,k$, then $\m{A}\in\quas(\E)$, a contradiction. Thus, there exists $j\leq k$ such that $\m{A}_j\in\iso(\bot\m{B}\top)$ for some finite Sugihara algebra $\m{B}$. Without the loss of generality, we can assume $k$ to be such a $j$. Thus $|\m{A}_k|\geq 3$, so we can apply the inductive hypothesis, obtaining $\Z_3\in\iso\sub(\m{A}_k)$ or $\Z_2\times\Z_3\in\iso\sub(\m{A}_k)$. Furthermore, we have $\Z_2\in\iso\sub(\m{A}_1\times\ldots\times\m{A}_{k-1})$. Since every product of subalgebras is a subalgebra of the product, this yields $\Z_2\times\Z_3\in\iso\sub(\m{A})$ and hence the result.
\end{proof} 

We finally arrive at the main result of this section.

\begin{theorem}\label{lem:AP classification}
The quasivarieties of Sugihara algebras with the amalgamation property are exactly the trivial variety $\vr(\Z_1)$, the variety of Boolean algebras $\vr(\Z_2)$, $\vr(\Z_3)$, $\quas(\E)$, and the variety of all Sugihara algebras $\vr(\Z)$.
\end{theorem}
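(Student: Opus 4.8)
The plan is to take Lemma~\ref{lem:with the ap} as supplying the affirmative direction---that the five listed quasivarieties indeed have the \prp{AP}---and to devote the argument to the converse, namely that every $\Q\in\omq(\SA)$ coincides with one of them. First I would dispose of the degenerate cases: if $\Q$ is trivial then $\Q=\vr(\Z_1)$, and if $\Q=\vr(\Z_2)$ we are done, so I may assume throughout that $\Q\supsetneq\vr(\Z_2)$. The entire remaining argument then splits on the single dichotomy of whether $\Z_3\in\Q$, and in each branch the closure Lemmas~\ref{lem:3reduction}--\ref{lem:pushup} are used to squeeze $\Q$ between known bounds.

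In the first branch I suppose $\Z_3\notin\Q$ and claim that $\Q\subseteq\quas(\E)$. Since Sugihara algebras are locally finite, $\Q$ is generated by its finite members, so if $\Q\not\subseteq\quas(\E)$ there is a finite $\m{A}\in\Q\setminus\quas(\E)$. Lemma~\ref{lem: for axiomatization of even chains} then yields $\Z_3\in\iso\sub(\m{A})$ or $\Z_2\times\Z_3\in\iso\sub(\m{A})$, and as $\Q$ is closed under $\iso$ and $\sub$ we obtain $\Z_3\in\Q$ or $\Z_2\times\Z_3\in\Q$; the latter forces $\Z_3\in\Q$ by Lemma~\ref{lem:3reduction}, so either way we contradict $\Z_3\notin\Q$. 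With $\Q\subseteq\quas(\E)$ secured, I apply Lemma~\ref{lem:split} to the proper containment $\Q\supsetneq\vr(\Z_2)$: the option $\Z_2\times\Z_3\in\Q$ is again excluded (it would give $\Z_3\in\Q$ by Lemma~\ref{lem:3reduction}), so $\Z_2\times\Z_4\in\Q$, whence $\Z_4\in\Q$ by Lemma~\ref{lem:4reduction} and $\E\in\Q$ by Lemma~\ref{lem:4lifting}. Thus $\quas(\E)\subseteq\Q\subseteq\quas(\E)$, giving $\Q=\quas(\E)$.

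In the second branch I suppose $\Z_3\in\Q$, so that $\vr(\Z_3)\subseteq\Q$. If $\Q=\vr(\Z_3)$ we are finished; otherwise $\Q\supsetneq\vr(\Z_3)$, and Lemma~\ref{lem:splitting for v3} gives $\Z_2\times\Z_4\in\Q$, hence $\Z_4\in\Q$ by Lemma~\ref{lem:4reduction}. Now $\Z_3,\Z_4\in\Q$, so Lemma~\ref{lem:pushup} yields $\Q=\SA=\vr(\Z)$. Collecting the outcomes of the two branches together with the degenerate cases exhausts the five listed possibilities.

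I expect the substantive content to reside entirely in the closure lemmas already established, so that the present argument is essentially an assembly of those results guided by the dichotomy on $\Z_3$. The one step demanding genuine care is the containment $\Q\subseteq\quas(\E)$ in the first branch: here one must first invoke local finiteness to pass to a finite witness before Lemma~\ref{lem: for axiomatization of even chains} becomes applicable, and then observe that Lemma~\ref{lem:3reduction} is exactly what closes off the $\Z_2\times\Z_3$ alternative and thereby rules out everything strictly between $\vr(\Z_2)$ and $\quas(\E)$.
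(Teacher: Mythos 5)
Your proposal is correct and takes essentially the same approach as the paper: the affirmative half is delegated to Lemma~\ref{lem:with the ap}, and the converse is assembled from the same closure lemmas (Lemmas~\ref{lem:3reduction}, \ref{lem:4reduction}, \ref{lem:4lifting}, \ref{lem:pushup}) in combination with Lemmas~\ref{lem:split}, \ref{lem:splitting for v3}, and \ref{lem: for axiomatization of even chains}. The only difference is organizational: you branch on whether $\Z_3\in\Q$, whereas the paper runs three cases according to which of $\Z_2\times\Z_3$, $\Z_2\times\Z_4$ lies in $\Q$---an equivalent decomposition, since a nontrivial $\Q$ contains $\Z_2$ and is closed under products, so $\Z_2\times\Z_3\in\Q$ holds exactly when $\Z_3\in\Q$ (using Lemma~\ref{lem:3reduction} for one direction).
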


\begin{proof}
Let $\Q\in\omq(\SA)$ be nontrivial. By Lemma~\ref{lem:with the ap}, it suffices to show that $\Q$ is one of $\vr(\Z_2)$, $\vr(\Z_3)$, $\vr(\Z)$, or $\quas(\E)\}$.

Since $\Q$ is nontrivial, $\vr(\Z_2)=\quas(\Z_2)\subseteq\Q$. Assume that this containment is proper. Then, by Lemma~\ref{lem:split}, either $\Z_2\times\Z_3\in\Q$ or $\Z_2\times\Z_4\in\Q$. We consider three mutually exclusive cases.

First, suppose that both $\Z_2\times\Z_3\in\Q$ and $\Z_2\times\Z_4\in\Q$. Then $\Z_3,\Z_4\in\Q$ by Lemmas~\ref{lem:3reduction} and \ref{lem:4reduction}. It then follows by Lemma~\ref{lem:pushup} that $\Q=\SA=\vr(\Z)$.

Second, suppose that $\Z_2\times\Z_3\in\Q$ and $\Z_2\times\Z_4\not\in\Q$. Then, since $\Z_2\times\Z_3\in\Q$, applying Lemma~\ref{lem:3reduction} gives that $\Z_3\in\Q$. Therefore, $\vr(\Z_3)=\quas(\Z_3)\subseteq\Q$. On the other hand, since $\Z_2\times\Z_4\notin\Q$, Lemma~\ref{lem:splitting for v3} implies that $\vr(\Z_3)$ is not properly contained in $\Q$, i.e., $\Q=\vr(\Z_3)$.

Third, suppose that $\Z_2\times\Z_3\not\in\Q$ and $\Z_2\times\Z_4\in\Q$. Since $\Z_2\times\Z_4\in\Q$, applying Lemmas~\ref{lem:4reduction} and \ref{lem:4lifting} gives that $\E\in\Q$ and, hence, $\quas(\E)\subseteq\Q$. If this containment is proper, then there exists a finite algebra $\m{A}\in\Q$ with $\m{A}\notin\quas(\E)$. Thus, the assumption that $\Z_2\times\Z_3\not\in\Q$ implies, by Lemma~\ref{lem: for axiomatization of even chains}, that $\Z_3$ embeds in $\m{A}$, so $\Z_3\in\Q$. But then $\Q=\SA$ by Lemma~\ref{lem:pushup}, contradicting the assumption that $\Z_2\times\Z_3\not\in\Q$. Thus $\Q=\quas(\E)$, and the result follows.
\end{proof}
The poset of all subquasivarieties of $\SA$ with the \prp{AP} is depicted in Figure~\ref{fig:omegaSA}.

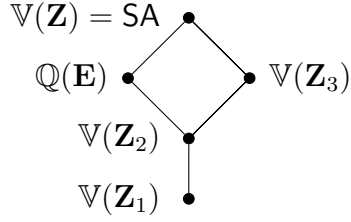
\begin{figure}[t]
\centering
\begin{tikzpicture}[
place/.style={circle,draw=black,fill=black, minimum size = 4pt, inner sep = 0pt},
square/.style={regular polygon,regular polygon sides=4},
place2/.style={square,draw=black,fill=black, minimum size = 5.5pt, inner sep = 0pt},
place3/.style={square,draw=black, minimum size = 5.5pt, inner sep = 0pt},
place4/.style={circle,draw=black, minimum size = 4pt, inner sep = 0pt}]

   \node[place] (tm) at (0,-2.4) {};
  \node[place] (ml) at (-.8,-3.2) {};
    \node[place] (mr) at (0.8,-3.2) {};
  \node[place] (bm) at (0,-4) {};
  \node[place] (bot) at (0,-4.8) {}; 
   
  \node[left] () at (tm) {$\vr(\Z)=\SA$\;\;};
  \node[left] () at (ml) {$\quas(\E)$\;};
  \node[right] () at (mr) {\;$\vr(\Z_3)$};
  \node[left] () at (bm) {$\vr(\Z_2)$\;\;};
  \node[left] () at (bot) {$\vr(\Z_1)$\;\;};
  
  \draw (bot) -- (bm) -- (mr) -- (bm) -- (ml) -- (tm) -- (mr) -- (tm);
\end{tikzpicture}
\caption{The poset $\omq(\SA)$ of quasivarieties of Sugihara algebras with the amalgamation property.}
\label{fig:omegaSA}
\end{figure}

Inspection of the proofs of the closure properties needed for Theorem~\ref{lem:AP classification} attest to the fact that the \prp{RCEP} has been avoided. We thus obtain the following surprising corollary.

\begin{corollary}\label{cor:from AP to RCEP}
Let $\Q$ be any quasivariety of Sugihara algebras. If $\Q$ has the amalgamation property, then $\Q$ has the relative congruence extension property.
\end{corollary}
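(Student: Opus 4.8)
The plan is to obtain this corollary immediately by juxtaposing the classification just established in Theorem~\ref{lem:AP classification} with Czelakowski's characterization of the \prp{RCEP} recorded in Lemma~\ref{lem:RCEP for Q SA}. First I would invoke Theorem~\ref{lem:AP classification} to conclude that any quasivariety $\Q$ of Sugihara algebras possessing the \prp{AP} is one of exactly five quasivarieties: the trivial $\vr(\Z_1)$, the Boolean algebras $\vr(\Z_2)$, $\vr(\Z_3)$, $\quas(\E)$, or $\vr(\Z)$.

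The next step is a routine comparison of lists. Lemma~\ref{lem:RCEP for Q SA} asserts that a quasivariety of Sugihara algebras has the \prp{RCEP} exactly when it is one of $\vr(\Z)$, $\quas(\E)$, $\vr(\Z_n)$, or $\quas(\Z_{2n})$ for some $n\geq 1$. Since $\vr(\Z_1)$, $\vr(\Z_2)$, and $\vr(\Z_3)$ are the cases $n\in\{1,2,3\}$ of $\vr(\Z_n)$, while $\quas(\E)$ and $\vr(\Z)$ occur verbatim, every quasivariety on the \prp{AP} list also appears on the \prp{RCEP} list. This yields the \prp{RCEP} for each such $\Q$, completing the argument.

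Because the substantive work has already been carried out in Theorem~\ref{lem:AP classification}, there is no genuine obstacle at this stage; the corollary reduces to a set-membership check against two explicit (parametrized) lists of quasivarieties. The only point deserving emphasis---and the reason the result is flagged as surprising---is that the derivation is not circular: as noted just before the statement, the closure lemmas underpinning Theorem~\ref{lem:AP classification} were deliberately proved without recourse to congruence extension, so deducing the \prp{RCEP} from the \prp{AP} via the classification does not secretly presuppose what it sets out to establish.
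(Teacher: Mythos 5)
Your proposal is correct and follows exactly the paper's own argument: compare the five quasivarieties with the \prp{AP} from Theorem~\ref{lem:AP classification} against Czelakowski--Dziobiak's list of quasivarieties with the \prp{RCEP} in Lemma~\ref{lem:RCEP for Q SA}, and observe that each of the former appears on the latter list. Your closing remark on non-circularity likewise matches the paper's discussion preceding the corollary.
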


\begin{proof}
The quasivarieties of Sugihara algebras are listed in Lemma~\ref{lem:RCEP for Q SA}, and directly comparing this with the list of quasivarieties with \prp{AP} in Theorem~\ref{lem:AP classification} gives the result.
\end{proof}

Recall from Lemma~\ref{lem:tip} that a quasivariety $\Q$ has the \prp{TIP} if and only if $\Q$ has both the \prp{RCEP} and \prp{AP}. Together with Corollary~\ref{cor:from AP to RCEP}, this supplies the following result.

\begin{corollary}\label{cor:from AP to TIP}
Let $\Q$ be any quasivariety of Sugihara algebras. Then $\Q$ has the amalgamation property if and only if $\Q$ has the transferable injections property.
\end{corollary}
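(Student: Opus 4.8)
The plan is to chain together Corollary~\ref{cor:from AP to RCEP} with Lemma~\ref{lem:tip}, which together pin down exactly when each of the three properties holds. The statement is a biconditional, so I would prove each direction separately, though both directions are short given the machinery already in place.

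For the forward direction, suppose $\Q$ is a quasivariety of Sugihara algebras with the \prp{AP}. By Corollary~\ref{cor:from AP to RCEP}, $\Q$ then also has the \prp{RCEP}. Thus $\Q$ has both the \prp{AP} and the \prp{RCEP}, and so by Lemma~\ref{lem:tip}, $\Q$ has the \prp{TIP}. This gives the implication from \prp{AP} to \prp{TIP} immediately, with no case analysis required.

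For the reverse direction, suppose $\Q$ has the \prp{TIP}. By Lemma~\ref{lem:tip}, the \prp{TIP} holds if and only if $\Q$ has both the \prp{RCEP} and the \prp{AP}; in particular, the \prp{TIP} entails the \prp{AP}. Hence $\Q$ has the \prp{AP}, completing the equivalence. Note that this direction does not even require Corollary~\ref{cor:from AP to RCEP}, since Lemma~\ref{lem:tip} already bundles the \prp{AP} into the characterization of the \prp{TIP}; the real content of the corollary lies in the forward direction, where Corollary~\ref{cor:from AP to RCEP} supplies the otherwise-missing \prp{RCEP} for free.

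There is essentially no obstacle here: the corollary is a formal consequence of the two cited results, and the only subtlety is recognizing that the \prp{AP}-to-\prp{RCEP} implication of Corollary~\ref{cor:from AP to RCEP} is precisely what is needed to upgrade a bare \prp{AP} to the conjunction of \prp{RCEP} and \prp{AP} that Lemma~\ref{lem:tip} demands. Since Lemma~\ref{lem:tip} holds for arbitrary quasivarieties and Corollary~\ref{cor:from AP to RCEP} holds for arbitrary quasivarieties of Sugihara algebras, no further hypotheses need to be checked, and the result follows for every quasivariety of Sugihara algebras.
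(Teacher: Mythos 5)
Your proof is correct and follows exactly the paper's own argument: the paper likewise obtains the corollary by combining Lemma~\ref{lem:tip} (\prp{TIP} if and only if \prp{RCEP} and \prp{AP}) with Corollary~\ref{cor:from AP to RCEP} (\prp{AP} implies \prp{RCEP} for quasivarieties of Sugihara algebras). Nothing is missing; your explicit split into the two directions just spells out what the paper leaves implicit.
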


\section{From amalgamation to interpolation}
\label{sec:exclusion}

Recall that a logic $\langle\mathcal{L},\vdash\rangle$ has the \emph{Maehara interpolation property} (or \emph{\prp{MIP}}) if for any set of formulas $\Sigma\cup\Gamma\cup\{\alpha\}$, if $\var(\Sigma\cup\{\alpha\})\cap\var(\Gamma)\neq\emptyset$ and $\Sigma,\Gamma\vdash\alpha$, there exists a set of formulas $\Delta$ such that $\var(\Delta)\subseteq\var(\Sigma\cup\{\alpha\})\cap\var(\Gamma)$, $\Gamma\vdash\Delta$, and $\Sigma,\Delta\vdash\alpha$.

As a consequence of Theorem~\ref{lem:AP classification} and Corollary~\ref{cor:from AP to TIP}, we have a complete description of the subquasivarieties of $\SA$ with the \prp{TIP}. On a logical level, the significance of this is encapsulated in the following lemma. 

\begin{lemma}[{see \cite[Theorem~2.2]{Czel1985}}]\label{lem:bridge}
Let $\langle\mathcal{L},\vdash\rangle$ be an algebraizable logic whose equivalent algebraic semantics is the quasivariety $\Q$. Then $\langle\mathcal{L},\vdash\rangle$ has the Maehara interpolation property if and only if $\Q$ has the transferable injections property.
\end{lemma}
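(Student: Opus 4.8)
The plan is to route everything through the standard dictionary of algebraization, which for an algebraizable logic with equivalent algebraic semantics $\Q$ identifies the theories of $\vdash$ over a set of variables $V$ with the relative congruences of the free algebra $\m{F}_\Q(V)$ (equivalently, with the $\Q$-quotients of $\m{F}_\Q(V)$), identifies $\Sigma\vdash\alpha$ with the holding of the equation associated to $\alpha$ in $\m{F}_\Q(V)/\theta_\Sigma$ (where $\theta_\Sigma$ is the relative congruence generated by $\Sigma$), and sends an inclusion $X\subseteq V$ of variable sets to the canonical embedding $\m{F}_\Q(X)\emd\m{F}_\Q(V)$. Under this dictionary I would argue that the \prp{MIP} and the \prp{TIP} are the same assertion read on opposite sides. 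The crucial match is that each property demands faithfulness of \emph{exactly one} leg, and the whole content is concentrated in decoding that single embedding: the output embedding $\gc$ of the \prp{TIP} is what makes the interpolant $\Delta$ usable.

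For \prp{TIP}${}\Rightarrow{}$\prp{MIP}, suppose $\Sigma,\Gamma\vdash\alpha$ with $X=\var(\Sigma\cup\{\alpha\})\cap\var(\Gamma)\neq\emptyset$, and write $Y=\var(\Sigma\cup\{\alpha\})$, $Z=\var(\Gamma)$. Take $\Delta$ to be the set of all $X$-formulas derivable from $\Gamma$, so $\Gamma\vdash\Delta$ holds automatically and only $\Sigma,\Delta\vdash\alpha$ remains. I would build the half-span $\langle\fb\colon\m{A}\emd\m{B},\fc\colon\m{A}\to\m{C}\rangle$ with $\m{B}=\m{F}_\Q(Z)/\theta_\Gamma$ the $\Gamma$-side, $\m{A}$ the subalgebra of $\m{B}$ generated by the images of $X$ (so $\fb$ is faithful by construction), and $\m{C}=\m{F}_\Q(Y)/\theta_{\Sigma\cup\Delta}$, with $\fc$ the natural map (well defined since the relations in $\Delta$ hold in $\m{C}$). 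Applying \prp{TIP} yields $\m{D}\in\Q$ with $\gb\colon\m{B}\to\m{D}$ and an embedding $\gc\colon\m{C}\emd\m{D}$ satisfying $\gb\fb=\gc\fc$. The maps $\gb,\gc$ assemble into a single valuation of $Y\cup Z$ in $\m{D}$, evaluating $Y$-variables through $\gc$ and $Z$-variables through $\gb$, consistently on $X$ precisely because $\gb\fb=\gc\fc$. Under this valuation all of $\Sigma$ and all of $\Gamma$ hold, so by soundness $\alpha$ holds as well; since $\alpha$ is a $Y$-formula its value lies in $\gc[\m{C}]$, and the faithfulness of $\gc$ pulls the conclusion back to $\m{C}$, giving $\Sigma,\Delta\vdash\alpha$.

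For the converse \prp{MIP}${}\Rightarrow{}$\prp{TIP}, I would begin from an arbitrary half-span $\langle\fb\colon\m{A}\emd\m{B},\fc\colon\m{A}\to\m{C}\rangle$ and present it by theories: choose variable sets $Z\supseteq X$ and $Y\supseteq X$ with $Y\cap Z=X$, where $X$ presents $\m{A}$, set $\m{B}=\m{F}_\Q(Z)/\theta_\Gamma$ and $\m{C}=\m{F}_\Q(Y)/\theta_\Sigma$ (the $X$-generators realizing $\fc$), and note that because $\fb$ is an embedding the $X$-relations of $\Gamma$ coincide with those of $\m{A}$. The candidate amalgam is $\m{D}=\m{F}_\Q(Y\cup Z)/\theta_{\Sigma\cup\Gamma}$; the homomorphism $\gb$ and the commutativity $\gb\fb=\gc\fc$ are immediate, so the entire content of \prp{TIP} reduces to showing that $\gc\colon\m{C}\emd\m{D}$ is faithful, that is, that adjoining $\Gamma$ is conservative over $\m{C}$: for $Y$-formulas, $\Sigma,\Gamma\vdash\alpha$ should force $\Sigma\vdash\alpha$. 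Given such $\alpha$, \prp{MIP} supplies $\Delta\subseteq\fm_X$ with $\Gamma\vdash\Delta$ and $\Sigma,\Delta\vdash\alpha$; since homomorphisms preserve equational truth, $\Gamma\vdash\Delta$ transports along the embedding $\fb$ and then along $\fc$ to yield $\Sigma\vdash\Delta$, whence $\Sigma\vdash\alpha$.

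The main obstacle is the bookkeeping that aligns the asymmetry of the two properties: in each direction exactly one leg must be certified faithful, and all the work lives in decoding that embedding. In the forward direction this is the step where faithfulness of $\gc$, together with soundness, converts ``$\alpha$ holds somewhere in $\m{D}$'' into ``$\alpha$ holds already in $\m{C}$''; in the backward direction it is the reduction of transferability to conservativity. Two further points need routine care: the hypothesis $X\neq\emptyset$ in \prp{MIP}, together with the degenerate cases in which $\m{A}$ is trivial or has no proper variable generators, handled separately; and the passage between arbitrary sets of formulas and finitely generated presentations, underwritten by the finitarity of the consequence relation and the fact that relative congruences are directed unions of their finitely generated subcongruences.
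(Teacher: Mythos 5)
The paper never proves this lemma: it is imported wholesale from Czelakowski's Theorem~2.2, so there is no in-paper argument to compare against line by line. Your reconstruction is, however, essentially the standard proof underlying that citation (and the related results in Czelakowski--Pigozzi): identify theories over a variable set $V$ with relative congruences of $\m{F}_\Q(V)$, present a half span by theories $\Sigma,\Gamma$ over variable sets meeting exactly in the generators of the common algebra, and observe that the asymmetry of the \prp{TIP} (only $\gc$ need be an embedding) is exactly the asymmetry of the \prp{MIP} (only $\Sigma,\Delta\vdash\alpha$ must be recovered, while $\Gamma\vdash\Delta$ comes for free). Both directions of your argument are sound, but three compressed points deserve spelling out. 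First, the well-definedness of $\fc$ in the forward direction and the reduction of injectivity of $\gc$ to conservativity in the backward direction rest on the same fact, and this is where algebraizability genuinely enters: a pair $(t,s)$ of $X$-terms lies in the relative congruence $\theta_\Gamma$ if and only if $\Gamma\vdash E(t,s)$, where $E(x,y)$ is the set of equivalence formulas of the algebraization, and $E(t,s)$ is again a set of $X$-formulas; without this witnessing, the phrase ``the relations in $\Delta$ hold in $\m{C}$'' does not by itself define a homomorphism, nor does conservativity for formulas yield injectivity for congruence pairs. Second, the nonemptiness proviso you flag in the backward direction is discharged by the same device: each $E(x,x)$ consists of theorems, hence lies in every theory, and in a constant-free language (the only case the paper needs, since extensions of $\lgc{RM}$ have no constants) these formulas must mention $x$, so $\var(\Sigma)\cap\var(\Gamma)\supseteq X\neq\emptyset$ once $X$ is chosen nonempty. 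Third, your appeal to finitarity is superfluous: the theory--congruence correspondence and both transport arguments work verbatim for arbitrary, possibly infinite, presentations, so no reduction to finitely generated data is required.
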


We consider another logical property. Let $\lgc{L}=\langle \mathcal{L},\vdash\rangle$ be any logic. A subset $T\subseteq\fm_\mathcal{L}(X)$ is called a \emph{theory of \lgc{L} over the set of variables $X$} provided that $T\vdash\alpha$ implies $\alpha\in T$, for any $\alpha\in\fm_\mathcal{L}(X)$. The logic $\lgc{L}$ has the \emph{Robinson property} (or \emph{\prp{RP}} for short) when it satisfies the following condition:
\begin{equation}
  \tag{\prp{RP}}\label{eq:RP}
  \parbox{\dimexpr\linewidth-6em}{%
    \strut
	Whenever $X,Y$ are sets of variables such that $X\cap Y\neq\emptyset$, $T$ is a theory of $\lgc{L}$ over $X$, and $S$ is a theory of $\lgc{L}$ over $Y$ such that $T\cap\fm_\mathcal{L}(X\cap Y)=S\cap\fm_\mathcal{L}(X\cap Y)$, there exists a theory $R$ of $\lgc{L}$ over $X\cup Y$ such that $T=R\cap\fm_\mathcal{L}(X)$ and $S=R\cap\fm_\mathcal{L}(Y)$.
    \strut
    }
\end{equation}
The following well-known result links the \prp{RP} to amalgamation.

\begin{lemma}[{\cite[Corollary~5.28]{CzelPig1999}}]\label{lem:bridge2}
Let $\langle\mathcal{L},\vdash\rangle$ be an algebraizable logic whose equivalent algebraic semantics is the quasivariety $\Q$. Then ${\langle\mathcal{L},\vdash\rangle}$ has the Robinson property if and only if $\Q$ has the amalgamation property.
\end{lemma}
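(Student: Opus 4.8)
The statement is a standard bridge theorem, so the plan is to exploit the defining feature of algebraizability: the lattice of theories of $\lgc{L}$ over a set of variables $X$ is isomorphic, via the Leibniz operator $\Omega$, to the lattice of $\Q$-congruences of the term algebra $\Tm(X)$, with each quotient $\Tm(X)/\Omega T$ lying in $\Q$ and generated by the images of the variables in $X$. First I would record the dictionary this sets up: a theory $T$ over $X$ corresponds to the algebra $\m{B}=\Tm(X)/\Omega T\in\Q$; the restriction $T\cap\fm_\mathcal{L}(Z)$ to a subset $Z\subseteq X$ corresponds to the congruence $\Omega T\cap\Tm(Z)^2$, and the natural map $\Tm(Z)/(\Omega T\cap\Tm(Z)^2)\to\Tm(X)/\Omega T$ is an embedding whose image is the subalgebra generated by $Z$. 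The essential point is that this correspondence is natural with respect to inclusions of variable sets, so that restricting a theory matches intersecting its Leibniz congruence and hence matches passing to the generated subalgebra.

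With this dictionary in hand, the forward implication $(\prp{RP}\Rightarrow\prp{AP})$ becomes a transcription of the amalgamation square. I would start with a span $\langle\fb\colon\m{A}\emd\m{B},\fc\colon\m{A}\emd\m{C}\rangle$ in $\Q$ and realize it through free presentations: choose pairwise disjoint index sets $I,J,K$ so that $\m{A}$ is generated by $I$, $\m{B}$ by $\fb[I]\cup J$, and $\m{C}$ by $\fc[I]\cup K$, and set $X=I\cup J$, $Y=I\cup K$, so that $X\cap Y=I$. The kernels of the presenting homomorphisms yield theories $T$ over $X$ and $S$ over $Y$ which, because $\fb$ and $\fc$ agree on $\m{A}$, satisfy $T\cap\fm_\mathcal{L}(X\cap Y)=S\cap\fm_\mathcal{L}(X\cap Y)$. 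Applying the \prp{RP} produces a theory $R$ over $X\cup Y$ with $T=R\cap\fm_\mathcal{L}(X)$ and $S=R\cap\fm_\mathcal{L}(Y)$; the algebra $\m{D}=\Tm(X\cup Y)/\Omega R\in\Q$ then receives maps of $\m{B}$ and $\m{C}$ that are embeddings precisely because of the two restriction equalities and that commute over $\m{A}$, which is exactly an amalgam as in the left diagram of Figure~\ref{fig:AP and TIP}.

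For the reverse implication $(\prp{AP}\Rightarrow\prp{RP})$ I would run the dictionary backwards. Given variable sets $X,Y$ with $X\cap Y\neq\emptyset$ and theories $T$ over $X$, $S$ over $Y$ agreeing on $\fm_\mathcal{L}(X\cap Y)$, form $\m{B}=\Tm(X)/\Omega T$, $\m{C}=\Tm(Y)/\Omega S$, and $\m{A}=\Tm(X\cap Y)/(\Omega T\cap\Tm(X\cap Y)^2)$; the agreement condition makes $\m{A}$ well defined and supplies embeddings $\m{A}\emd\m{B}$ and $\m{A}\emd\m{C}$, hence a span in $\Q$. An amalgam $\m{D}\in\Q$, after replacing it by the subalgebra generated by the images of the variables in $X\cup Y$ (still in $\Q$, since quasivarieties are closed under $\sub$), is a quotient $\Tm(X\cup Y)/\Psi$ for a $\Q$-congruence $\Psi$, and the theory $R$ corresponding to $\Psi$ restricts back to $T$ and $S$ exactly because the amalgamating maps are embeddings commuting over $\m{A}$.

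The main obstacle, and the only place where algebraizability is genuinely invoked, is establishing the naturality of the theory--congruence isomorphism with respect to the inclusions of variable sets: one must verify that ``restriction of a theory to a subset of variables'' translates faithfully into ``intersection of the Leibniz congruence with the relevant subalgebra's square,'' that this intersection is again a $\Q$-congruence, and that it induces an honest embedding of the associated quotients. Once this compatibility is secured, the equivalence is a purely diagrammatic matching of the \prp{RP} condition against the amalgamation square, with no further computation required.
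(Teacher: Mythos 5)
The paper offers no proof of this lemma at all---it is imported by citation from Czelakowski and Pigozzi \cite{CzelPig1999}---so the only meaningful comparison is with the standard argument in that source, and your reconstruction is essentially that argument, carried out correctly. The dictionary you set up is the right one: for an algebraizable logic the Leibniz operator $\Omega$ gives a lattice isomorphism between theories of $\lgc{L}$ over $X$ and $\Q$-congruences of $\Tm(X)$, and the naturality you single out as the crux does hold, for exactly the reason such bridge theorems work: if $\Delta$ is the set of equivalence formulas and $\tau$ the set of defining equations, then for $\alpha,\beta\in\fm_\mathcal{L}(Z)$ every formula in $\Delta(\alpha,\beta)$ lies in $\fm_\mathcal{L}(Z)$, so $\Omega(T\cap\fm_\mathcal{L}(Z))=\Omega T\cap \fm_\mathcal{L}(Z)^{2}$, and dually $\tau(\alpha)$ involves only the variables of $\alpha$, so passing from a $\Q$-congruence back to a theory is likewise compatible with restriction. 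With that secured, both directions are the diagram chase you describe. Two details are worth making explicit, though neither is a gap: first, in proving that \prp{RP} implies \prp{AP} you must choose the generating set $I$ of $\m{A}$ to be nonempty (always possible, since universes are nonempty---even the trivial algebra has a singleton generating set), because the paper's formulation of \prp{RP} requires $X\cap Y\neq\emptyset$; second, in proving that \prp{AP} implies \prp{RP}, the equalities $\ker h\cap\fm_\mathcal{L}(X)^{2}=\Omega T$ and $\ker h\cap\fm_\mathcal{L}(Y)^{2}=\Omega S$, where $h\colon\Tm(X\cup Y)\to\m{D}$ is the homomorphism induced by the amalgam, rest precisely on the injectivity of the two amalgamating maps $\gb,\gc$---this is where the \prp{AP} hypothesis is actually consumed, and it is what makes the theory $R$ corresponding to $\ker h$ restrict back to $T$ and $S$ rather than to something larger.
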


Using Lemmas~\ref{lem:bridge} and \ref{lem:bridge2}, the following is immediate from Corollary~\ref{cor:from AP to TIP}. Here we make use of the well-known fact that $\lgc{RM}$ has $\SA$ as its equivalent algebraic semantics.

\begin{proposition}\label{prop:RP iff MIP}
Let $\lgc{L}$ be any extension of $\lgc{RM}$. Then $\lgc{L}$ has the Robinson property if and only if $\lgc{L}$ has the Maehara interpolation property.
\end{proposition}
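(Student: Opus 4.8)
The plan is to prove Proposition~\ref{prop:RP iff MIP} by assembling the bridge theorems already in hand and invoking the key equivalence from Corollary~\ref{cor:from AP to TIP}. Let $\lgc{L}$ be any extension of $\lgc{RM}$. Since $\lgc{RM}$ is algebraizable with equivalent algebraic semantics $\SA$, any extension $\lgc{L}$ is itself algebraizable and its equivalent algebraic semantics is some subquasivariety $\Q$ of $\SA$; this is the standard correspondence between extensions of an algebraizable logic and subquasivarieties of its equivalent algebraic semantics, and it lets me transfer every logical property of $\lgc{L}$ into an algebraic property of $\Q$.

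First I would fix the quasivariety $\Q$ of Sugihara algebras corresponding to $\lgc{L}$. Then I would apply Lemma~\ref{lem:bridge2} to obtain that $\lgc{L}$ has the Robinson property if and only if $\Q$ has the amalgamation property, and Lemma~\ref{lem:bridge} to obtain that $\lgc{L}$ has the Maehara interpolation property if and only if $\Q$ has the transferable injections property. The decisive step is then Corollary~\ref{cor:from AP to TIP}, which asserts that for \emph{any} quasivariety of Sugihara algebras the \prp{AP} and the \prp{TIP} coincide. Chaining these three equivalences gives
\[
\lgc{L}\text{ has the \prp{RP}}\iff\Q\text{ has the \prp{AP}}\iff\Q\text{ has the \prp{TIP}}\iff\lgc{L}\text{ has the \prp{MIP}},
\]
which is exactly the claimed biconditional.

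There is essentially no hard analytic obstacle here, since all the substantive content has been pushed into the earlier results: the heavy lifting is done by Corollary~\ref{cor:from AP to TIP}, which in turn rests on the classification in Theorem~\ref{lem:AP classification} together with the observation that every quasivariety of Sugihara algebras with the \prp{AP} also has the \prp{RCEP}. The only point requiring mild care is the justification that $\Q$ really is the equivalent algebraic semantics of $\lgc{L}$ and that both Lemma~\ref{lem:bridge} and Lemma~\ref{lem:bridge2} genuinely apply to it, i.e.\ that $\lgc{L}$ inherits algebraizability from $\lgc{RM}$; this is well known for extensions of algebraizable logics, and the excerpt already records that $\lgc{RM}$ has $\SA$ as its equivalent algebraic semantics. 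Consequently the proposition is immediate once the three equivalences are lined up, and I would present it as a short deduction rather than a computation.
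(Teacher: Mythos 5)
Your proof is correct and follows exactly the paper's own argument: the paper likewise derives the proposition by chaining Lemma~\ref{lem:bridge2} (\prp{RP} iff \prp{AP}), Corollary~\ref{cor:from AP to TIP} (\prp{AP} iff \prp{TIP} for quasivarieties of Sugihara algebras), and Lemma~\ref{lem:bridge} (\prp{TIP} iff \prp{MIP}), using the fact that $\lgc{RM}$ is algebraized by $\SA$ so that extensions of $\lgc{RM}$ correspond to subquasivarieties of $\SA$. Your added care about $\lgc{L}$ inheriting algebraizability is a reasonable explicit note on a point the paper leaves implicit.
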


We now arrive at the last of our main theorems. In order to state in properly, we recall the \emph{Dugundji formulas}, which provide axiomatizations for the extensions of $\lgc{RM}$ corresponding to $\vr(\Z_n)$, $n\geq 1$. Let $p_1,p_2,p_3\ldots$ be distinct propositional variables. We define $\delta_1$ to be equal to the propositional variable $p_1$, and for $n\geq 2$ we define $\delta_n$ to be the disjunction
\[
\bigvee_{1\leq i < j\leq n} (p_i\leftrightarrow p_j),
\]
where $\alpha\leftrightarrow\beta$ abbreviates the formula $(\alpha\to\beta)\meet (\beta\to\alpha)$. It is proven in \cite[Corollary~2]{Dunn1970} that the logic corresponding to $\vr(\Z_n)$ is axiomatized relative to $\lgc{RM}$ by the single formula $\delta_n$.

\begin{theorem}\label{thm:MIP classification}
There are exactly five extensions of $\lgc{RM}$ with the Maehara interpolation property. These are:
\begin{enumerate}
\item $\lgc{RM}$ itself.
\item The trivial logic, obtained from $\lgc{RM}$ by adding the axiom $\delta_1$.
\item Classical propositional logic, obtained from $\lgc{RM}$ by adding the axiom $\delta_2$.
\item The extension of $\lgc{RM}$ by the axiom $\delta_3$.
\item The extension of $\lgc{RM}$ by the rules $\alpha,\neg\alpha\vdash\beta$ and $\alpha,\neg\alpha\join\beta\vdash\beta$.
\end{enumerate}
\end{theorem}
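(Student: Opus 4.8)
The plan is to translate the purely algebraic classification already obtained into its logical counterpart via the established bridge theorems, and then verify that the five quasivarieties with the \prp{AP} correspond to exactly the five listed logics. The starting point is Theorem~\ref{lem:AP classification}, which tells us that the quasivarieties of Sugihara algebras with the \prp{AP} are precisely $\vr(\Z_1)$, $\vr(\Z_2)$, $\vr(\Z_3)$, $\quas(\E)$, and $\vr(\Z)=\SA$. Since $\lgc{RM}$ is algebraizable with $\SA$ as its equivalent algebraic semantics, extensions of $\lgc{RM}$ are in bijective correspondence with subquasivarieties of $\SA$, with the logic $\lgc{L}$ having the \prp{MIP} if and only if its corresponding quasivariety $\Q$ has the \prp{TIP} (Lemma~\ref{lem:bridge}). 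By Corollary~\ref{cor:from AP to TIP}, $\Q$ has the \prp{TIP} if and only if $\Q$ has the \prp{AP}, so the five logics with the \prp{MIP} are exactly those corresponding to the five quasivarieties in Theorem~\ref{lem:AP classification}.

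The remaining work is thus to identify each of the five quasivarieties with its syntactic presentation as an extension of $\lgc{RM}$. First I would handle the four cases covered by the Dugundji formulas: by the result of Dunn cited in the excerpt (\cite[Corollary~2]{Dunn1970}), the logic corresponding to $\vr(\Z_n)$ is axiomatized relative to $\lgc{RM}$ by the single formula $\delta_n$. This immediately yields items (2), (3), and (4), corresponding to $\vr(\Z_1)$, $\vr(\Z_2)$, and $\vr(\Z_3)$ respectively; the identification of $\vr(\Z_2)$ with classical propositional logic and $\vr(\Z_1)$ with the trivial logic is standard. Item (1) is the base case, since $\lgc{RM}$ itself corresponds to all of $\SA=\vr(\Z)$, requiring no additional axioms.

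The one genuinely new piece of verification is item (5): I must show that the quasivariety $\quas(\E)$ corresponds precisely to the extension of $\lgc{RM}$ by the two rules $\alpha,\neg\alpha\vdash\beta$ and $\alpha,\neg\alpha\join\beta\vdash\beta$. This is where the work lies, since $\quas(\E)$ is a proper quasivariety and not a variety, so it cannot be captured by an equational (axiomatic) extension and genuinely requires quasiequational rules. The approach is to pass through algebraization: each rule $\Gamma\vdash\varphi$ translates into a quasiequation valid in exactly those Sugihara algebras satisfying the corresponding implication. I would verify directly that the quasiequations associated with these two rules hold in $\E$ (equivalently, in every $\Z_{2n}$), and conversely that any Sugihara algebra validating both quasiequations lies in $\quas(\E)$—that is, that these two rules axiomatize $\quas(\E)$ relative to $\SA$. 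Concretely, the first rule enforces that there is no element $a$ with both $a$ and $\neg a$ above the unit (ruling out the fixed point / the presence of $\Z_{2n+1}$ type behavior), while the second rule pins down the even chains; one checks that the relatively subdirectly irreducible members of the quasivariety they define are exactly the $\Z_{2n}$, and then invokes $\quas(\E)=\quas(\{\Z_{2n}\mid n\geq 1\})$ from Section~\ref{sec:sugihara}.

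The main obstacle I expect is precisely this identification of $\quas(\E)$ with the given rule system: one must argue that these two specific rules, and not merely some quasiequational base, capture the quasivariety, and that no purely axiomatic presentation suffices. Establishing that the validating algebras are exactly the subdirect products of even chains—ruling out any algebra containing an involution fixed point (hence a copy of $\Z_3$)—is the crux, and it relies on the structural facts about totally ordered Sugihara algebras in Lemma~\ref{lem:totally ordered} together with the observation that the fixed point is exactly what distinguishes $\quas(\E)$ from $\vr(\Z)$. Once this correspondence is secured, the theorem follows by assembling the five cases.
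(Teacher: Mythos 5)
Your reduction of the theorem to algebra is exactly the paper's: Lemma~\ref{lem:bridge} turns the \prp{MIP} into the \prp{TIP}, Corollary~\ref{cor:from AP to TIP} collapses the \prp{TIP} to the \prp{AP}, Theorem~\ref{lem:AP classification} supplies the five quasivarieties, and Dunn's result identifies the logics of $\vr(\Z_1)$, $\vr(\Z_2)$, $\vr(\Z_3)$ with the extensions by $\delta_1,\delta_2,\delta_3$. So items (1)--(4) are fine. The problem is item (5), the only place where new work is needed, and specifically the ``completeness'' direction: that every Sugihara algebra validating the two rules lies in $\quas(\E)$.

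Two things go wrong there. First, your stated crux---that what separates $\quas(\E)$ from $\vr(\Z)$ is the absence of an involution fixed point, equivalently of a copy of $\Z_3$---is not correct: the algebra $\Z_2\times\Z_3$ has no involution fixed point and no subalgebra isomorphic to $\Z_3$ (a fixed point would have to be fixed in the $\Z_2$-coordinate, which is impossible), yet $\Z_2\times\Z_3\notin\quas(\E)$; it satisfies $\alpha,\neg\alpha\vdash\beta$ vacuously but refutes $\alpha,\neg\alpha\join\beta\vdash\beta$ (take $\alpha\mapsto(1,0)$, $\beta\mapsto(1,-1)$). So an argument organized around fixed points alone can only show that the quasivariety $\Q'$ defined by the rules is contained in the quasivariety defined by the \emph{first} rule, which properly contains $\quas(\E)$; this is precisely why two rules are needed. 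Second, your proposed mechanism---compute the relatively subdirectly irreducible members of $\Q'$ and show they are the $\Z_{2n}$---cannot invoke Lemma~\ref{lem:totally ordered}: that lemma describes the \emph{absolutely} (finitely) subdirectly irreducible Sugihara algebras, and relative subdirect irreducibility in a proper subquasivariety is strictly weaker. For instance, $\Z_2\times\Z_3$ is relatively subdirectly irreducible in $\quas(\Z_2\times\Z_3)$, because the only congruence giving a subdirect decomposition has quotient $\Z_3$, which lies outside that quasivariety (it cannot embed into $\Z_2\times\Z_3$), so the relative congruences of $\Z_2\times\Z_3$ form a three-element chain with the identity meet-irreducible. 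Hence you have no a priori reason that the RSI members of $\Q'$ are chains at all, and this relative-versus-absolute gap is exactly what the paper works to avoid throughout. The missing ingredient is Lemma~\ref{lem: for axiomatization of even chains}: every \emph{finite} Sugihara algebra outside $\quas(\E)$ contains a subalgebra isomorphic to $\Z_3$ or to $\Z_2\times\Z_3$. With it, together with local finiteness of $\SA$, the argument closes immediately: soundness of the two rules in $\E$ gives $\quas(\E)\subseteq\Q'$; if the inclusion were proper there would be a finite $\m{A}\in\Q'$ with $\m{A}\notin\quas(\E)$, whence $\Z_3\in\iso\sub(\m{A})$ or $\Z_2\times\Z_3\in\iso\sub(\m{A})$, so $\Z_3\in\Q'$ or $\Z_2\times\Z_3\in\Q'$; but $\Z_3$ refutes the first rule and $\Z_2\times\Z_3$ the second, and quasiequations are inherited by subalgebras---a contradiction.
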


\begin{proof}
By Lemma~\ref{lem:bridge} and the fact that $\lgc{RM}$ is algebraized by $\SA$, an extension of $\lgc{RM}$ has the \prp{MIP} if and only if the corresponding subvariety of $\SA$ has the \prp{TIP}. Thus, by Theorem~\ref{lem:AP classification} and Corollary~\ref{cor:from AP to TIP}, the extensions of $\lgc{RM}$ with the \prp{MIP} are exactly the extensions corresponding to the quasivarieties $\vr(\Z_1)$, $\vr(\Z_2)$, $\vr(\Z_3)$, $\quas(\E)$, and $\vr(\Z)=\SA$ The extension corresponding to $\SA$ is just $\lgc{RM}$ itself, and the extensions correspond to $\vr(\Z_1)$, $\vr(\Z_2)$, $\vr(\Z_3)$ are respectively the extensions by the Dugundji formulas $\delta_1$, $\delta_2$, and $\delta_3$, respectively.

All that remains to show is that the extension $\vdash_\mathrm{E}$ corresponding to $\quas(\E)$ may be obtained by adding the rules $\alpha,\neg\alpha\vdash\beta$ and $\alpha,\neg\alpha\join\beta\vdash\beta$ to $\lgc{RM}$. Let $\vdash'$ be the extension of $\lgc{RM}$ by adding these two rules. It is routine to check that each of these rules is valid in $\E$, and hence $\alpha,\neg\alpha\vdash_\mathrm{E}\beta$ and $\alpha,\neg\alpha\join\beta\vdash_\mathrm{E}\beta$, so $\vdash'\subseteq\vdash_\mathrm{E}$. On the other hand, let $\Q$ be the subquasivariety of $\SA$ corresponding to $\vdash'$. Then $\quas(\E)\subseteq\Q$. If this inclusion is proper, then, by Lemma~\ref{lem: for axiomatization of even chains}, either $\Z_3\in\Q$ or $\Z_2\times\Z_3\in\Q$. But this is a contradiction, since $\Z_3$ refutes the rule $\alpha,\neg\alpha\vdash\beta$ and $\Z_2\times\Z_3$ refutes the rule $\alpha,\neg\alpha\join\beta\vdash\beta$. It follows that $\Q=\quas(\E)$, so the result has been proven.
\end{proof}

We say that an extension $\vdash$ of $\lgc{RM}$ is \emph{finitely based} if there exists a finite collection of rules $\Sigma$ such that $\vdash$ is the extension of $\lgc{RM}$ by $\Sigma$. Clearly, an extension of $\lgc{RM}$ is finitely based if and only if the quasivariety of Sugihara algebras comprising its equivalent algebraic semantics is finitely based as a quasivariety.

\begin{theorem}\label{thm:decidability}
\begin{enumerate}
\item[]
\item It is effectively decidable whether a finitely based subquasivariety of $\SA$ has the \prp{AP}.
\item it is effectively decidable whether a finitely based extension of $\lgc{RM}$ has the \prp{MIP}.
\end{enumerate}
\end{theorem}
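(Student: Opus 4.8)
The plan is to reduce both parts to a single effectivity fact---that quasiequational consequence over $\SA$ is decidable---and then to leverage the fact that Theorem~\ref{lem:AP classification} has pinned the quasivarieties with the \prp{AP} down to a finite, explicitly finitely based list. For part (2), I would first observe that it reduces to part (1): an extension of $\lgc{RM}$ is finitely based precisely when its equivalent algebraic semantics $\Q$ is a finitely based quasivariety of Sugihara algebras, and by Lemma~\ref{lem:bridge} together with Corollary~\ref{cor:from AP to TIP} such an extension has the \prp{MIP} if and only if $\Q$ has the \prp{AP}. Thus it suffices to decide the \prp{AP} for finitely based $\Q\subseteq\SA$, which is exactly part (1).

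For part (1), the key engine is the claim that quasiequational consequence over $\SA$ is decidable: given a finite set of quasiequations $\Sigma$ and a single quasiequation $\psi$ in $k$ variables, one can effectively decide whether $\Sigma\models_{\SA}\psi$. The point is that a failure of $\psi$ in $\mathrm{Mod}_{\SA}(\Sigma)$ is witnessed by some $\m{B}\in\SA$ together with an assignment of the $k$ variables of $\psi$, and the subalgebra generated by those $k$ elements already refutes $\psi$, still satisfies $\Sigma$, and is finite of effectively bounded size. Here the concrete structure theory gives an explicit bound: since $\meet$ and $\join$ are binary minimum and maximum, $\cdot$ returns a value among $\{x,y,x\meet y\}$, $\to$ returns a value among $\{\neg x\meet y,\neg x\join y\}$, and $\neg$ is involutive, any $k$-element subset of a \emph{totally ordered} Sugihara algebra generates a subalgebra of at most $2k$ elements. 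Combined with Lemma~\ref{lem:totally ordered} (every finite Sugihara algebra is a subdirect product of chains), this shows that the $k$-generated relatively subdirectly irreducible algebras all lie among $\Z_1,\ldots,\Z_{2k}$, so the finite free algebra $F_{\SA}(k)$ is a concretely computable subalgebra of a product of these chains and its cardinality is a computable bound $N(k)$. Deciding $\Sigma\models_{\SA}\psi$ then amounts to enumerating all Sugihara algebras of size at most $N(k)$, discarding those failing $\Sigma$, and checking whether any survivor refutes $\psi$---a terminating, finite computation.

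With decidable consequence in hand, I would assemble the procedure for part (1) as follows. Each of the five quasivarieties with the \prp{AP} is finitely based: $\vr(\Z_1)$, $\vr(\Z_2)$, and $\vr(\Z_3)$ are axiomatized over $\SA$ by the Dugundji formulas $\delta_1,\delta_2,\delta_3$, the variety $\vr(\Z)=\SA$ is the base itself, and $\quas(\E)$ is finitely based by the two quasiequations corresponding to the rules isolated in the proof of Theorem~\ref{thm:MIP classification}. Given a finitely based $\Q$, presented by a finite set of quasiequations, I would decide, for each of these five target quasivarieties $\Q_i$, whether $\Q=\Q_i$ by testing the two containments $\Q\subseteq\Q_i$ and $\Q_i\subseteq\Q$; each containment unwinds into a finite conjunction of consequence assertions $\Sigma\models_{\SA}\psi$ and is therefore decidable by the previous step. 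By Theorem~\ref{lem:AP classification}, $\Q$ has the \prp{AP} if and only if it coincides with one of the $\Q_i$, so the procedure answers affirmatively exactly when some equality test succeeds.

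The main obstacle is the decidability of quasiequational consequence over $\SA$; everything else is bookkeeping on top of Theorem~\ref{lem:AP classification} and the bridge results. The crux is to extract a genuinely \emph{effective} bound on the size of a minimal refuting algebra rather than merely invoking local finiteness abstractly, since local finiteness alone need not yield a computable free spectrum. The $2k$ bound on $k$-generated totally ordered Sugihara algebras, together with the subdirect decomposition of Lemma~\ref{lem:totally ordered}, supplies exactly this bound, and is the one place where the concrete arithmetic of the operations of $\Z$ must be used with care.
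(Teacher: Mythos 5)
Your overall skeleton matches the paper's: both arguments reduce part (2) to part (1) via Lemma~\ref{lem:bridge} and Corollary~\ref{cor:from AP to TIP}, and both exploit Theorem~\ref{lem:AP classification} to convert ``has the \prp{AP}'' into ``coincides with one of five explicitly finitely based quasivarieties,'' decided by testing the two containments quasiequation-by-quasiequation. Where you genuinely diverge is in how the decidability of those tests is obtained. The paper simply cites the known general fact that every finitely based, locally finite quasivariety of finite type has a decidable quasiequational theory, and applies it to the quasivariety on the right-hand side of each containment; you instead reconstruct the needed decidability concretely for $\SA$: the observation that in a totally ordered Sugihara algebra all operations take values in $\{x,y,\neg x,\neg y\}$ gives the bound $2k$ on the size of $k$-generated chains, hence the $k$-generated subdirectly irreducibles lie among $\Z_1,\ldots,\Z_{2k}$, hence $F_{\SA}(k)$ is computable and any failure of $\Sigma\models_{\SA}\psi$ is witnessed by an algebra of computably bounded size. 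Your route is more self-contained and yields explicit (even elementary) bounds, which addresses your legitimate point that bare local finiteness is not an effective hypothesis; the paper's route is shorter and proves a more general statement (containment is decidable between finitely based subquasivarieties of any finitely based, locally finite quasivariety of finite type, not just of $\SA$).

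One step in your procedure is under-justified: ``enumerating all Sugihara algebras of size at most $N(k)$'' presupposes that membership of a finite algebra in $\SA$ is effectively decidable, and you never say how to check it. This is true but not free: it requires either a finite (quasi)equational basis for $\SA$---a hypothesis the paper's cited lemma also needs, and which holds because $\lgc{RM}$ is finitely axiomatized and algebraizability transfers finite axiomatizability---or a structural test, e.g., checking that the finite algebra embeds subdirectly into chains $\Z_n$. The cleanest patch uses only what you have already built: every $k$-generated member of $\SA$ is, up to isomorphism, a quotient $F_{\SA}(k)/\theta$, so instead of enumerating abstract algebras and filtering for membership, enumerate the finitely many congruences $\theta$ of your computed $F_{\SA}(k)$, check whether $F_{\SA}(k)/\theta$ satisfies $\Sigma$, and check whether the images of the free generators refute $\psi$; every algebra so produced is automatically in $\SA$, and every potential witness arises this way. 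With that emendation your argument is complete.
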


\begin{proof}
We prove that if $\Q$ is a finitely based, locally finite quasivariety of finite type and $\Q_1,\Q_2\subseteq\Q$ are subquasivarieties defined relative to $\Q$ by finite sets of quasiequations, then it is effectively decidable whether $\Q_1\subseteq\Q_2$. This suffices to prove the theorem since, as a consequence, it is effectively decidable whether two quasivarieties defined relative to $\SA$ by finite sets of quasiequations coincide and, in particular, it is effectively decidable whether a given quasivariety $\Q$ defined relative to $\SA$ by finitely many quasiequations is one of the five quasivarieties with the \prp{AP}, cf.~Theorem~\ref{thm:MIP classification}.

It is well known that any finitely based, locally finite quasivariety of finite type has a decidable quasiequational theory; see, e.g., \cite[Lemma~6.40]{GalatosJipsenKowalskiOno2007} and \cite[pp. 44-45]{StJohnThesis}. So, suppose that $\Q$ is a finitely based, locally finite quasivariety defined by finitely many quasiequations, and let $\Q_1,\Q_2$ by subquasivarieties of $\Q$ defined, respectively, by the finite sets of quasiequations $\Sigma_1$ and $\Sigma_2$. Because $\Q_2$ is locally finite, its quasiequational theory is decidable. Hence, it is decidable whether $\Q_2\models\Sigma_1$. But $\Q_2\models\Sigma_1$ if and only if $\Q_1\subseteq\Q_2$, so this is to say that it is decidable whether $\Q_1\subseteq\Q_2$. The result follows.
\end{proof}

As a concluding remark, we note that our description of $\omq(\SA)$ yields only a characterization of the extensions of $\lgc{RM}$ with the \prp{MIP}, not necessarily of all extensions with the weaker \prp{DIP}. In the presence of truth constants, the \prp{RP} and \prp{DIP} are known to coincide for substructural logics with a local deduction theorem (see \cite{KiharaOno2010}), as well as with the \prp{MIP}. However, in the absence of a local deduction theorem, the relationship between \prp{RP} and \prp{DIP} is not known. There are conceivably many extensions of $\lgc{RM}$ without local deduction theorems---i.e., corresponding to quasivarieties of Sugihara algebras without \prp{RCEP}---that nevertheless have \prp{DIP}. Pinning down the precise relationship between these metalogical properties is the subject of much on-going work, and we believe that the present study plays a role in the eventual solution of this mystery.

\end{document}